\newtheorem{theorem}{Theorem}[section]
\newtheorem{proposition}[theorem]{Proposition}
\newtheorem{corollary}[theorem]{Corollary}
\newtheorem{lemma}[theorem]{Lemma}
\theoremstyle{definition}
\newtheorem{example}[theorem]{Example}
\newtheorem{example*}{Example}
\newtheorem{remark}[theorem]{Remark}
\newcommand{\Z}{\mathbb{Z}}
\newcommand{\Q}{\mathbb{Q}}
\newcommand{\R}{\mathbb{R}}
\newcommand{\N}{\mathbb{N}}
\begin{document}

\title[Partial genera of unimodular lattices over imaginary-quadratic fields]{Steinitz classes and partial genera of unimodular lattices over imaginary-quadratic fields}

\author{Michael Jürgens}
\address{Fakultät für Mathematik, Technische Universität Dortmund, 44221 Dortmund, Germany}
\email{michael.juergens@math.tu-dortmund.de}

\author{Marc C. Zimmermann}
\address{Fakultät für Mathematik, Technische Universität Dortmund, 44221 Dortmund, Germany}
\email{marc-christian.zimmermann@math.tu-dortmund.de}

\date{\today}
\keywords{hermitian unimodular lattice, imaginary-quadratic field, mass formula, Steinitz class}

\begin{abstract}
In this paper we first of all determine all possible genera of (odd and even) definite unimodular lattices over an imaginary-quadratic field. The main questions are whether the partial class numbers of lattices with given Steinitz class within one genus are equal for all occurring Steinitz classes and whether the partial masses of those partial genera are equal. We show that the answer to the first question in general is ``no'' by giving a counter example, while the answer to the second question is ``yes'' by proving a mass formula for partial masses. Finally, we determine a list of all single-class partial genera and show that a partial genus consists of only one class if and only if the whole genus consists of only one class.
\end{abstract}

\maketitle

\section*{Introduction}
Let $K$ be an algebraic number field. If $L$ is a quadratic lattice over $K$, it is easily seen that the Steinitz class $\mathfrak{st}(L)$ is an invariant of the genus of $L$. In particular if one lattice in a genus is free so are all others. In general, this is not the case for hermitian lattices over $K$ as the following three examples of genera of (positive) definite unimodular hermitian lattices over imaginary-quadratic fields show:

\begin{example*} Let $K:=\mathbb{Q}(\sqrt{-47})$. The class group of $K$ is the cyclic group of order 5 with generator $[\mathfrak{a}]$. There is exactly one genus of definite unimodular lattices of rank 4 over $K$: \vspace{0.2cm}
\begin{center}
\renewcommand{\arraystretch}{1.2}
\begin{tabular}{|r|c|ccccc|c|}
 \hline
   &  & $[\mathfrak{o}]$ & $[\mathfrak{a}]$ & $[\mathfrak{a}^2]$ & $[\mathfrak{a}^3]$ & $[\mathfrak{a}^4]$ & parity \\
   &  &  \multicolumn{5}{|c|}{partial} &  \\
 \hline
 class number & 1510 & 302 & 302 & 302 & 302 & 302 & odd \\
 mass   & $\tfrac{1105}{8}$ & $\tfrac{221}{8}$ & $\tfrac{221}{8}$ & $\tfrac{221}{8}$ & $\tfrac{221}{8}$ & $\tfrac{221}{8}$ & \\
 \hline
\end{tabular}
\end{center}
\end{example*}

\begin{example*}Let $K:=\mathbb{Q}(\sqrt{-95})$. The class group of $K$ is the cyclic group of order 8 with generator $[\mathfrak{a}]$. In this case there are exactly two genera of definite unimodular lattices of rank 3 over $K$:\vspace{0.2cm}
\begin{center}
\renewcommand{\arraystretch}{1.2}
\begin{tabular}{|r|c|cccccccc|c|}
 \hline
 &  & $[\mathfrak{o}]$ & $[\mathfrak{a}]$ & $[\mathfrak{a}^2]$ & $[\mathfrak{a}^3]$ & $[\mathfrak{a}^4]$ & $[\mathfrak{a}^5]$ & $[\mathfrak{a}^6]$ & $[\mathfrak{a}^7]$ & parity \\
   &  &  \multicolumn{8}{|c|}{partial} & \\
 \hline
 class number & 428 & 107 & - & 107 & - & 107 & - & 107 & - & odd\\
 mass   & 70 & $\tfrac{35}{2}$ & - & $\tfrac{35}{2}$ & - & $\tfrac{35}{2}$ & - & $\tfrac{35}{2}$ & - & \\
 \hline
 class number & 428 & - & 107 & - & 107 & - & 107 & - & 107 & odd\\
 mass   & 70 & - & $\tfrac{35}{2}$ & - & $\tfrac{35}{2}$ & - & $\tfrac{35}{2}$ & - & $\tfrac{35}{2}$ & \\
 \hline
\end{tabular}
\end{center}
\end{example*}

\begin{example*}Let $K:=\mathbb{Q}(\sqrt{-213})$. The class group of $K$ is the group $\Z/4\Z\times\Z/2\Z$ of order 8 with generators $[\mathfrak{a}]$ and $[\mathfrak{b}]$. There are exactly 4 genera of odd definite unimodular lattices and exactly 2 genera of even definite unimodular lattices of rank $2$ over $K$: \vspace{0.2cm}
\begin{center}
\renewcommand{\arraystretch}{1.2}
\begin{tabular}{|r|c|cccccccc|c|}
 \hline
 &  & $[\mathfrak{o}]$ & $[\mathfrak{a}]$ & $[\mathfrak{b}]$ & $[\mathfrak{ab}]$ & $[\mathfrak{a}^2]$ & $[\mathfrak{a}^3]$ & $[\mathfrak{a}^2\mathfrak{b}]$ & $[\mathfrak{a}^3\mathfrak{b}]$ & parity\\
   &  &  \multicolumn{8}{|c|}{partial} &  \\
 \hline
 class number & 50 & 25 & - & - & - & 25 & - & - & - & odd \\
 mass   & $\tfrac{35}{2}$ & $\tfrac{35}{4}$ & - & - & - & $\tfrac{35}{4}$ & - & - & - &\\
 \hline
 class number & 40 & - & 20 & - & - & - & 20 & - & - & odd\\
 mass   & $18$ & - & $9$ & - & - & - & $9$ & - & - &\\
 \hline
 class number & 74 & - & - & 37 & - & - & - & 37 & - & odd\\
 mass   & $35$ & - & - & $\tfrac{35}{2}$ & - & - & - & $\tfrac{35}{2}$ & - &\\
 \hline
 class number & 84 & - & - & - & 42 & - & - & - & 42 & odd\\
 mass   & $36$ & - & - & - & $18$ & - & - & - & $18$ &\\
 \hline
 \hline
 class number & 32 & - & 16 & - & - & - & 16 & - & - & even\\
 mass   & $12$ & - & $6$ & - & - & - & $6$ & - & - &\\
 \hline
 class number & 56 & - & - & 28 & - & - & - & 28 & - & even\\
 mass   & $\tfrac{70}{3}$ & - & - & $\tfrac{35}{3}$ & - & - & - & $\tfrac{35}{3}$ & - &\\
 \hline
\end{tabular}
\end{center}
\end{example*}
\vspace{0.5cm}
These three examples as well as the explicit enumeration of some other genera of definite unimodular lattices of ``low rank'' over imaginary-quadratic fields with rather ``small discriminant'' lead to the following questions:
\begin{itemize}
 \item[1.] How many genera of definite unimodular lattices of given rank over a given imaginary-quadratic field do exist? In particular, which genera of even definite unimodular lattices do exist?
 \item[2.] How do the Steinitz classes distribute among those genera? Does this distribution induce a partition of the class group?
 \item[3.] Are the \emph{partial class numbers} of lattices within a genus of definite unimodular lattices with the same Steinitz class equal for all occurring Steinitz classes?
 \item[4.] Are the \emph{partial masses} of lattices within a genus of definite unimodular lattices with the same Steinitz class equal for all occurring Steinitz classes?
 \item[5.] What are the single-class partial genera of definite unimodular lattices?

\end{itemize}

In the first section of this paper we collect some basic definitions and facts about hermitian lattices over a number field $K$. We explain the notions of ideal genera as well as special classes and special genera of hermitian lattices. The latter lead to questions similar to 3 and 4 concerning special masses and special class numbers, which have been (partially) answered in the literature (cf. \cite{rehmann},\cite{hashimoto},\cite{hashimoto23}). We describe a connection between special and partial genera, which we use in the following sections to deduce results about partial class numbers and partial masses from these known results.\\
In section 2 we cite a result of Hoffmann (cf. \cite{hoffmann}) and Hashimoto and Koseki (cf. \cite{hashimoto}) concerning the number of hermitian spaces carrying a  definite unimodular lattice. Furthermore, we give a criterion for the existence of an odd resp. even  definite unimodular lattice of given rank and given Steinitz class over a given imaginary-quadratic field. We deduce the essentially known result that two  definite unimodular lattices of same rank and parity are in the same genus if and only if their Steinitz classes are in the same ideal genus. \\
In section 3 we show that the answer to question 3 in general is ``no'' by giving a counter example. Actually, these counter examples seem to be very rare, at least for low rank and small discriminant. Therefore we try to find sufficient conditions on the rank and the class group that imply the equality of the partial class numbers in as many cases as possible. For example we can show that such an equality holds if the rank is 2 or the class group is 2-elementary. Using results of Hashimoto and Koseki (cf. \cite{hashimoto23}) we deduce the same result for lattices of rank 3.\\
In section 4 we use results of Rehmann (cf. \cite{rehmann}) to deduce a mass formula for partial masses which shows that the partial masses are indeed equal for all occurring Steinitz classes within a genus of  definite unimodular lattices, i.e. the answer to question 4 is ``yes''. Furthermore, we evaluate the formulas in Gan and Yu (cf. \cite{gan}) and Cho (cf. \cite{cho}) for the local densities of the genera determined in section 2 and we show how to compute these masses explicitly with the help of generalized Bernoulli numbers.\\
Finally, in section 5 we use the mass formula for the partial masses to compute a list of single-class partial genera. It turns out that a partial genus of definite unimodular lattices consists of exactly one class if and only if the whole genus consists of only one class.\\

All computations described in this paper have been performed with \textsc{Magma} (cf. \cite{magma}). A description of the necessary algorithms, in particular the neighbor method for hermitian lattices can be found in \cite{schiemann}. The implementations of these algorithms in \textsc{Magma} can be found under \url{http://www.mathematik.tu-dortmund.de/sites/michael-juergens/research}

\section{Preliminaries}
Throughout this paper let $K:=\Q(\sqrt{D})$ be an imaginary-quadratic field, i.e. $D<0$ and subsequently we will always assume that $D$ is square-free. We denote by $\overline{\phantom{X}} : K \rightarrow K$,  $\alpha\mapsto \overline{\alpha}$ the non-trivial Galois automorphism of $K/\mathbb{Q}$. Furthermore, let $\mathfrak{o}$ be the ring of integers of $K$ and $d_K$ the discriminant of $K$. It is a well-known fact that $\mathfrak{o}=\Z[\tfrac{1+\sqrt{D}}{2}]$ and $d_K=D$ if $D\equiv_4 1$, and $\mathfrak{o}=\Z[\sqrt{D}]$ and $d_K=4D$ if $D\equiv_4 2,3$. Let $\mathbb{P}$ denote the set of rational prime numbers. A prime $p\in \mathbb{P}$ is called \emph{ramified} (in the extension $K/\Q$) if there is a prime ideal $\mathfrak{p}\subseteq \mathfrak{o}$ with $p\mathfrak{o}=\mathfrak{p}^2$ and \emph{unramified} otherwise. More precisely, an unramified prime $p$ is called \emph{inert} if $p\mathfrak{o}=\mathfrak{p}$ for some prime ideal $\mathfrak{p}$, and \emph{split} if $p\mathfrak{o}=\mathfrak{p}\overline{\
\mathfrak{p}}$ for some prime ideal $\mathfrak{p}$ with $\mathfrak{p}\neq\overline{\mathfrak{p}}$. Let $\chi_K$ denote the character associated to $K$ that is the unique Dirichlet character satisfying
$$\chi_{K}(2) := \begin{cases} 0 & d_K\equiv_4 0\\ 1 & d_K\equiv_8 1\\ -1 & d_K\equiv_8 5 \end{cases} \quad\text{ and }\quad\chi_K(p):=\left(\frac{d_K}{p}\right) \text{ for }p\neq 2. $$
It is also well-known that a prime $p\in \mathbb{P}$ is ramified (resp. inert/split) if and only if $\chi_k(p)=0$ (resp. $-1$/$+1$). We denote by $t$ the number of ramified primes in $K/\mathbb{Q}$, i.e. the number of distinct prime divisors of $d_K$.
We set $I_K$ to be the group of fractional ideals of $K$, $I_K^{(1)}$ the subgroup of fractional ideals of norm $1$ and $\mathcal{C}_K$ to be the ideal class group of $K$. We denote by $h_K$ its class number, and by $[\mathfrak{a}]$ the class of a (fractional) ideal $\mathfrak{a}$. 
\subsection{Lattices and their genera}
By a \emph{hermitian space} $V$ over $K$ we understand a $K$-vector space of dimension $n < \infty$ together with a hermitian form $h: V \times V \rightarrow K$ that is a sesquilinear form (linear in the first component) such that $h(x,y) = \overline{h(y,x)}$. Note that since $\overline{h(x,x)} = h(x,x)$, we have $h(x,x) \in \Q$ for all $x \in V$. A hermitian space $V$ is called \emph{(positive) definite}, if $h(x,x)>0$ for all $x\in V\setminus\{0\}$. We denote by $dV \in \Q^{\ast} / N^K_\mathbb{Q}(K^{\ast})$ the \emph{discriminant} of $V$, by $U(V)$ the \emph{unitary group} of $V$, and we write $V \cong W$ for two \emph{isometric} hermitian spaces. It is well-known that two definite hermitian spaces $V$ and $W$ are isometric if and only if $\operatorname{dim}(V) = \operatorname{dim}(W)$ and $dV = dW$ (cf. \cite{scharlau2012quadratic} Ch. 10, ex. 1.6).\\[1ex]
A \emph{(hermitian) lattice} $L$ over $K$ is a finitely generated $\mathfrak{o}$-module $L \subseteq V$ for some hermitian space $V$. We define the \emph{rank} of $L$ by $\operatorname{rk}(L) := \dim(K{\cdot}L)$ and call $L$ a lattice \emph{on} $V$ if $K{\cdot}L = V$. $L$ is called \emph{definite} if $K{\cdot}L$ is. Every  definite lattice $L$ can be uniquely decomposed into indecomposable lattices $L_1,\ldots,L_s$ (but for their order):
$$ L=L_1\perp \ldots \perp L_s $$
(cf. \cite{eichler} Satz 3). In particular we have the law of cancellation for  definite lattices. On the other hand, every lattice $L$ on $V$ can be written as 
\begin{align}\label{lattice}
L = \mathfrak{a}_1x_1 + \hdots + \mathfrak{a}_nx_n,
\end{align}
with ideals $\mathfrak{a}_1,\ldots,\mathfrak{a}_n$ and a basis $x_1, \hdots, x_n$ of $V$. Let $\mathfrak{st}(L)$ denote the \emph{Steinitz class} of $L$ that is the class $[\mathfrak{a}]\in\mathcal{C}_K$ of the ideal $\mathfrak{a} := \prod_{i=1}^n \mathfrak{a}_i$, which is a well-defined module-invariant of $L$.\\
Two lattices $L$ and $N$ are \emph{isometric}, $L \cong N$, if there exists a $\sigma \in U(V)$ with $\sigma(L) = N$. The set $\operatorname{cls}(L):=\{N\subseteq V\text{ an } \mathfrak{o}\text{-lattice }\mid \exists \sigma\in U(V): N=\sigma(L)\}$ of isometric lattices is called the \emph{(isometry) class} of $L$, and the set of $\sigma \in U(V)$ with $\sigma(L) = L$ is called the \emph{unitary group} of $L$ and denoted $U(L)$. We denote by $L^{\#} := \{x \in V \mid h(x,L) \subseteq \mathfrak{o} \}$ the \emph{dual lattice} of $L$. \\ 
We define the \emph{scale} $\mathfrak{s}L$ and the \emph{norm} $\mathfrak{n}L$ of $L$ to be the $\mathfrak{o}$-ideals generated by $\{h(x,y) \mid x,y \in L\}$ and $\{ h(x,x) \mid x \in L \}$ respectively. For a lattice as in (\ref{lattice}) we define the volume of $L$ by $\mathfrak{v} L := \prod_{i=1}^n \mathfrak{a}_i \overline{\mathfrak{a}_i}  \cdot \operatorname{det}(x_1,\ldots, x_n)$, the latter being the determinant of a Gram matrix of $h$ relative to $x_1,\ldots,x_n$. This is an $\mathfrak{o}$-ideal independent of the choice of the basis and the ideals in (\ref{lattice}) and thus well defined.\\
We call a lattice $L$ on $V$ $\mathfrak{a}$-\emph{modular} for an ideal $\mathfrak{a}$ if $\mathfrak{s}L = \mathfrak{a}$ and $\mathfrak{v} L = \mathfrak{a}^n$. An $\mathfrak{o}$-modular lattice is called \emph{unimodular}. It is easy to show that $L$ is $\mathfrak{a}$-modular if and only if $\mathfrak{a}L^{\#} = L$ and in particular unimodular if and only if $L^{\#} = L$.\\
A lattice $L$ is called \emph{even} if $\mathfrak{n}L \subseteq 2\mathfrak{o}$, and \emph{odd} otherwise, which is referred to as the \emph{parity} of $L$. It is clear that $\mathfrak{n}L = a\mathfrak{o}$ for an $a \in \Z$, and $\operatorname{Tr}^K_\mathbb{Q}(\mathfrak{s}L)\mathfrak{o} \subseteq \mathfrak{n}L \subseteq \mathfrak{s}L$. Since 
\begin{align}\label{trace:o}
 \operatorname{Tr}^K_\mathbb{Q}(\mathfrak{o}) = \begin{cases}
                     \Z & d_K\text{ is odd}, \\
                     2\Z & d_K\text{ is even},
                    \end{cases}
\end{align}
we have $\mathfrak{n}L = \mathfrak{o}$ or $\mathfrak{n}L = 2\mathfrak{o}$ for a unimodular lattice $L$. Thus a unimodular lattice is even if and only if $\mathfrak{n}L = 2\mathfrak{o}$, and odd if and only if $\mathfrak{n}L = \mathfrak{o}$.\\[1ex]
For $p \in \mathbb{P}$ let $\Q_p$ be the localization of $\Q$ at $p$ and $\Z_p$ its ring of integers. Let $K_p := K \otimes_\Q \Q_p$ and $\mathfrak{o}_p := \mathfrak{o} \otimes_\Z \Z_p$ the ring of integers of $K_p$. If $p$ is inert or ramified, then $K_p \cong K_\mathfrak{p} = \Q_p(\sqrt{D})$ is a field and $\mathfrak{o}_p\cong \mathfrak{o}_\mathfrak{p}$, whereas we have $K_p \cong \Q_p \times \Q_p$ and $\mathfrak{o}_p \cong \Z_p \times \Z_p$ if $p$ is split. For $p\in\mathbb{P}$ we make use of the \emph{local norm residue symbol} $(\ \cdot\ ,K/\mathbb{Q})_p : \mathbb{Q}_p^* \rightarrow \{\pm 1\}$ with $(\alpha,K/\mathbb{Q})_p=+1$ if and only if $\alpha\in N^{K_p}_{\mathbb{Q}_p}(K_p^\ast)$. \\
Set $V_p := V \otimes_\Q \Q_p$ equipped with the $p$-adic extension of $h$, and $L_p := L \otimes_\Z \Z_p \subseteq V_p$. The \emph{genus} of a lattice $L$ on $V$ is defined to be
  $$\operatorname{gen}(L) := \{N \subseteq V \text{ an } \mathfrak{o}\text{-lattice }\mid \forall p \in \mathbb{P} : L_p \cong M_p \}.$$
If $\mathcal{G}$ is a genus of unimodular lattices, the \emph{class number} $h_{\mathcal{G}}$ is the number of isometry classes of lattices in $\mathcal{G}$ and if $L_1,\ldots, L_h$ is a set of representatives of the classes contained in $\mathcal{G}$, the \emph{mass} of $\mathcal{G}$ is defined as 
$$ \operatorname{mass}(\mathcal{G}):=\sum_{i=1}^{h_{\mathcal{G}}} \tfrac{1}{|U(L_i)|}. $$
If $[\mathfrak{a}]\in\mathcal{C}_K$ is an occurring Steinitz class of some lattice in $\mathcal{G}$, we denote the \emph{partial genus} of all lattices in $\mathcal{G}$ with Steinitz class $[\mathfrak{a}]$ by $\mathcal{G}_{[\mathfrak{a}]}$. For a partial genus $\mathcal{G}_{[\mathfrak{a}]}$ we denote its \emph{partial class number} by $h_{\mathcal{G}_{[\mathfrak{a}]}}$. As a shorthand notation, we will sometimes use $h_{[\mathfrak{a}]}$ if the associated genus is clear from context.
Assuming that $L_1,\ldots,L_{h_{[\mathfrak{a}]}}$ is a set of representatives for $\mathcal{G}_{[\mathfrak{a}]}$, the \emph{partial mass} of $\mathcal{G}_{[\mathfrak{a}]}$ is defined by
$$ \operatorname{mass}(\mathcal{G}_{[\mathfrak{a}]}):=\sum^{h_{[\mathfrak{a}]}}_{i=1} \tfrac{1}{|U(L_i)|}. $$

\subsection{Relation between special and partial genera}
When replacing the unitary group by the special unitary group in the above definitions one derives the analogous notions of \emph{special class}, \emph{special genus}, \emph{special class number} and \emph{special mass}. They will be denoted by $\operatorname{cls}^+,\operatorname{gen}^+$, $h^+$ and $\operatorname{mass}^+$.\\
It is easy to see that the special classes also put a partition on the set of all lattices on $V$, and that this partition is finer than the partition into isometry classes. Similarly, the partition into special genera is finer than the one into genera. We are interested in connecting partial and special genera. To do so we rely quite heavily on terminology and results from \cite{hashimoto}, for the sake of completeness and smoothing minor differences of notation we reproduce parts of this work to some extent.\\
Given lattices $L,N$ in $V$ we define the ideal $[L:N]$ generated by $\{ \det(\rho) \mid \rho \in \operatorname{End}_K(V):\ \rho(L) \subset N \}$ to be the generalized index of $N$ in $L$ (cf. \cite{shimura}). It then holds that for $L,M,N$ in $V$ we have $[L:M][M:N] = [L:N]$ and for $L$ in $V$ and $\rho \in \operatorname{GL}(V)$ we have $[L:\rho(L) = \det(\rho)\mathfrak{o}$.\\
For lattices $L= \mathfrak{a}_1 x_1 + \ldots + \mathfrak{a}_n x_n$ and $N= \mathfrak{b}_1 y_1 + \ldots + \mathfrak{b}_n y_n$ in $V$ the generalized index is computed to satisfy $[L:N] = \det(\rho)\prod_{i=1}^n \mathfrak{b}_i \mathfrak{a}_i^{-1}$, where $\rho$ is the element of $\operatorname{GL}(V)$ that transforms the basis $x_1,\ldots,x_n$ into $y_1,\ldots,y_n$ (cf. \cite{hoffmann}, after $2.1$). This implies the following characterization.

\begin{lemma} \label{steinitz:index}
 For lattices $L,N$ in $V$ we have that
 \begin{enumerate}
  \item[a)] If both are unimodular: $[L:N] \in I_K^{(1)}$.
  \item[b)] $\mathfrak{st}(L) = \mathfrak{st}(N) \Leftrightarrow [L:N] = (\alpha),\ \alpha \in K^{(1)}$.
 \end{enumerate}
\end{lemma}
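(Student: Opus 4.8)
The plan is to reduce both statements to the explicit formula for the generalized index recorded immediately before the lemma: writing $L = \mathfrak{a}_1 x_1 + \dots + \mathfrak{a}_n x_n$ and $N = \mathfrak{b}_1 y_1 + \dots + \mathfrak{b}_n y_n$, and letting $\rho \in \operatorname{GL}(V)$ be the map sending $x_i$ to $y_i$, we have $[L:N] = \det(\rho)\prod_{i=1}^n \mathfrak{b}_i \mathfrak{a}_i^{-1}$. From here everything becomes a computation with fractional ideals, the conjugation $\overline{\phantom{X}}$, and the norm. Throughout I keep the unimodularity hypothesis of part a) in force, since it is needed for part b) as well (see the last paragraph).

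For part a) I would compute the ideal norm $N^K_{\Q}([L:N]) = [L:N]\,\overline{[L:N]}$ directly from the formula. Two inputs feed in: $\det(\rho)\overline{\det(\rho)} = N^K_{\Q}(\det\rho)$, and the transformation law for Gram determinants $\det(y_1,\dots,y_n) = N^K_{\Q}(\det\rho)\,\det(x_1,\dots,x_n)$, which follows by comparing the two Gram matrices (using that $h$ is linear in the first and conjugate-linear in the second argument) and taking determinants. Combining these with the definition $\mathfrak{v}L = \prod_i \mathfrak{a}_i\overline{\mathfrak{a}_i}\cdot\det(x_1,\dots,x_n)$ yields the clean identity $N^K_{\Q}([L:N]) = \mathfrak{v}N\cdot(\mathfrak{v}L)^{-1}$. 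For unimodular $L,N$ we have $\mathfrak{v}L = \mathfrak{v}N = \mathfrak{o}$, hence $N^K_{\Q}([L:N]) = \mathfrak{o}$, i.e.\ $[L:N]\in I_K^{(1)}$.

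For part b) I would instead read off the ideal \emph{class} of $[L:N]$. Since $(\det\rho)$ is principal, $[L:N]$ and $\prod_i\mathfrak{b}_i\,(\prod_i\mathfrak{a}_i)^{-1}$ represent the same class, so the class of $[L:N]$ in $\mathcal{C}_K$ equals $\mathfrak{st}(N)\,\mathfrak{st}(L)^{-1}$. Thus $\mathfrak{st}(L) = \mathfrak{st}(N)$ is equivalent to $[L:N]$ being principal, say $[L:N] = (\beta)$. It then remains only to upgrade ``principal'' to ``principal with a norm-one generator'': here part a) does the work, since $[L:N]\in I_K^{(1)}$ forces $N^K_{\Q}(\beta)\Z = \Z$, and because $K$ is imaginary-quadratic $N^K_{\Q}(\beta) = \beta\overline{\beta}>0$, so $N^K_{\Q}(\beta)=1$ and $\beta\in K^{(1)}$. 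The converse implication is immediate, as any $(\alpha)$ with $\alpha\in K^{(1)}$ is principal and hence of trivial class.

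The only genuinely delicate point is the role of unimodularity in b). The equivalence is false for arbitrary lattices: taking $N = 2L$ gives $\mathfrak{st}(N) = \mathfrak{st}(L)$ but $[L:2L] = (2^n)$ has norm $2^{2n}\neq 1$, so it admits no norm-one generator. The subtlety is therefore not the class computation, which is purely formal, but the norm-one clause, and this is exactly the information supplied by part a). I expect this interplay to be the main thing to state carefully, whereas the rest is bookkeeping with the index formula and the conjugation/norm identities.
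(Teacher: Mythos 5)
Your proof is correct; part b) matches the paper's argument, while part a) takes a genuinely different route. For a) the paper invokes the theorem on invariant factors (O'Meara \S 81D) to put $L$ and $N$ in simultaneously adapted form $L=\mathfrak{a}_1x_1+\ldots+\mathfrak{a}_nx_n$, $N=\mathfrak{a}_1\mathfrak{e}_1x_1+\ldots+\mathfrak{a}_n\mathfrak{e}_nx_n$, so that $[L:N]=\prod_i\mathfrak{e}_i$ and the factor $\det(x_1,\ldots,x_n)$ cancels outright when comparing $\mathfrak{v}L=\mathfrak{o}=\mathfrak{v}N$; you instead work with two arbitrary bases, feed the Gram-determinant transformation law $\det(y_1,\ldots,y_n)=N^K_{\Q}(\det\rho)\cdot\det(x_1,\ldots,x_n)$ into the index formula, and arrive at the identity $N^K_{\Q}([L:N])=\mathfrak{v}N\cdot(\mathfrak{v}L)^{-1}$, valid for all lattices, of which a) is the unimodular special case. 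Your route avoids citing the invariant-factor theorem at the cost of the transformation law --- but that law is precisely what the paper already relies on when asserting that $\mathfrak{v}L$ is well defined, so no genuinely new input is consumed, and your version has the advantage of isolating a reusable general identity. On b) both arguments read off that the class of $[L:N]$ equals $\mathfrak{st}(N)\cdot\mathfrak{st}(L)^{-1}$ from the same formula, reducing the claim to principality of $[L:N]$; you additionally spell out the step the paper compresses into ``follows from a)'', namely that a principal fractional ideal of norm $1$ admits a norm-one generator because $N^K_{\Q}(\beta)=\beta\overline{\beta}>0$ for imaginary-quadratic $K$, which is where positivity is essential. Finally, your remark that unimodularity, though absent from the literal statement of b), is needed for the forward implication --- witnessed by $N=2L$, where $[L:2L]=(2^n)$ is principal of norm $2^{2n}\neq 1$ --- is correct and consistent with the paper's implicit reading, since its proof of b) invokes a).
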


\begin{proof}
 Towards $a)$ we note that by the Theorem on invariant factors (cf.  \S $81$ D, \cite{omeara}) there exists a basis $x_1,\ldots,x_n$ of $V$ and ideals $\mathfrak{a}_1,\ldots,\mathfrak{a}_n$, $\mathfrak{e}_1,\ldots,\mathfrak{e}_n$ with $\mathfrak{e}_1 \subseteq \mathfrak{e}_2 \subseteq \ldots \subseteq \mathfrak{e}_n$, such that $L= \mathfrak{a}_1 x_1 + \ldots + \mathfrak{a}_n x_n$ and $N = \mathfrak{a}_1 \mathfrak{e}_1 x_1 + \ldots + \mathfrak{a}_n \mathfrak{e}_n x_n$. Together with the above discussion this implies
 $$ [L:N] = \det(\operatorname{id}_V) \cdot \prod_{i=1}^n \mathfrak{a}_i \mathfrak{e}_i \mathfrak{a}_i^{-1} = \prod_{i=1}^n \mathfrak{e}_i.$$
 Since both lattices are assumed to be unimodular it follows that
 $$ \det(x_1,\ldots,x_n) \cdot \prod_{i=1}^n \mathfrak{a}_i \overline{\mathfrak{a}_i} = \mathfrak{v}L = \mathfrak{o} = \mathfrak{v}N = \det(x_1,\ldots,x_n) \cdot \prod_{i=1}^n \mathfrak{a}_i \mathfrak{e}_i \overline{\mathfrak{a}_i \mathfrak{e}_i }$$
 which implies
 $$ N(\prod_{i=1}^n \mathfrak{a}_i) = N(\prod_{i=1}^n \mathfrak{a}_i) \cdot N(\prod_{i=1}^n \mathfrak{e}_i), $$
 whence 
 $$ N([L:N]) = N(\prod_{i=1}^n \mathfrak{e}_i) = 1. $$ 
 Assumption $b)$ follows from $a)$ and by writing $[L:N] = \det(\rho)\prod_{i=1}^n \mathfrak{b}_i \mathfrak{a}_i^{-1}$ as in the above discussion, from which it follows that $[L:N]$ is pricipal if and only if $\prod_{i=1}^n \mathfrak{b}_i \mathfrak{a}_i^{-1}$ is principal, which is equivalent to  $\mathfrak{st}(L) = \mathfrak{st}(N)$ by $[\prod_{i=1}^n \mathfrak{b}_i \mathfrak{a}_i^{-1}] = [\prod_{i=1}^n \mathfrak{b}_i] \cdot [\prod_{i=1}^n \mathfrak{a}_i]^{-1} = \mathfrak{st}(N) \cdot \mathfrak{st}(L)^{-1}$.
\end{proof}

Let $\mathcal{G} = \operatorname{gen}(L)$ be a genus of unimodular hermitian lattices on $V$. For $M \in \mathcal{G}$ we have noted above that $[L:M] \in I_K^{(1)}$. Given any $\mathfrak{b} \in I_K^{(1)}$ we define, with respect to some fixed $L$, the set $\mathcal{G}_{\mathfrak{b}} = \{ N \in \mathcal{G} \mid [L:N] = \mathfrak{b} \}$ (cf. \cite{hashimoto} $(2.5)$).

We make use of the following result.
\begin{lemma}[cf. \cite{hashimoto}, Lemma 2.4] \label{genus:partition}
  Let $\mathcal{G}$ be a genus of unimodular hermitian lattices. Each $\mathcal{G}_{\mathfrak{b}}$ as above is the disjoint union of exactly $e(\mathcal{G})$ special genera, where $e(\mathcal{G}):=\prod_{p\in\mathbb{P}} [\mathfrak{o}^{(1)}_p : \det{U(L_p)}]$ and $\mathfrak{o}_p^{(1)} :=  \{ \varepsilon \in \mathfrak{o}_p^* \mid \varepsilon \overline{\varepsilon} = 1 \}$, for any $L \in \mathcal{G}$. Furthermore $e(\mathcal{G}) \leq 2$ and $e(\mathcal{G}) = 1$ if the rank or the parity is odd.
\end{lemma}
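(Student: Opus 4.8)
The plan is to use the determinant as the organizing invariant and to reduce both assertions to local computations. Fix $L\in\mathcal{G}$ and, for each occurring $\mathfrak{b}\in I_K^{(1)}$, a norm-one generator $\beta_p$ of the local ideal $\mathfrak{b}_p$ for every $p$; such a generator exists because $N(\mathfrak{b})=\mathfrak{o}$ forces each $\mathfrak{b}_p$ to be generated by an element of $K_p^{(1)}$. For $N\in\mathcal{G}_{\mathfrak{b}}$ choose at each $p$ some $\sigma_p\in U(V_p)$ with $N_p=\sigma_p(L_p)$, which exists since $N_p\cong L_p$, and set $d_p(N):=\beta_p^{-1}\det(\sigma_p)\,\det U(L_p)\in\mathfrak{o}_p^{(1)}/\det U(L_p)$. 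This is well defined: from $[L_p:N_p]=\det(\sigma_p)\mathfrak{o}_p=\mathfrak{b}_p$ we get $\beta_p^{-1}\det(\sigma_p)\in\mathfrak{o}_p^{(1)}$, and a different transporter changes $\det(\sigma_p)$ only by an element of $\det U(L_p)$.

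The key point for assertion $(i)$ is that two lattices $N,N'\in\mathcal{G}_{\mathfrak{b}}$ lie in the same special genus if and only if $d_p(N)=d_p(N')$ for all $p$. This rests on the elementary observation that $\sigma_p(L_p)$ and $\sigma'_p(L_p)$ are $SU(V_p)$-isometric precisely when $\det(\sigma_p)\in\det(\sigma'_p)\det U(L_p)$; the forward direction uses $\det\tau=1$ for $\tau\in SU(V_p)$, and the converse is witnessed by $\tau_p:=\sigma_p u_p^{-1}(\sigma'_p)^{-1}$ for a suitable $u_p\in U(L_p)$. I would then show that $d=(d_p)_p\colon\mathcal{G}_{\mathfrak{b}}\to\prod_p\mathfrak{o}_p^{(1)}/\det U(L_p)$ is onto: since scalar multiplication by any $\lambda\in K_p^{(1)}$ on a nondegenerate line, extended by the identity, is unitary of determinant $\lambda$, the map $\det\colon U(V_p)\to K_p^{(1)}$ is surjective, so every target class is realized by a suitable local lattice, and a lattice prescribed to differ from $L$ at only finitely many primes glues to a global member of $\mathcal{G}_{\mathfrak{b}}$. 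Hence the special genera inside $\mathcal{G}_{\mathfrak{b}}$ are in bijection with $\prod_p\mathfrak{o}_p^{(1)}/\det U(L_p)$, a group of order $e(\mathcal{G})$; moreover the defining relation forces the local index ideals $\det(\sigma_p)\mathfrak{o}_p$ to agree, so each special genus really lies inside a single $\mathcal{G}_{\mathfrak{b}}$.

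For assertion $(ii)$ I would evaluate the factors $[\mathfrak{o}_p^{(1)}:\det U(L_p)]$ prime by prime. At an unramified $p$ the unimodular lattice $L_p$ is an orthogonal sum of unit-norm lines and hyperbolic planes, and on each summand the determinant of the unitary group already exhausts $\mathfrak{o}_p^{(1)}$ (via scalar multiplications, respectively the diagonal torus together with Hilbert~90 for the units), so the index is $1$. At an odd ramified $p$ the lattice $L_p$ has an orthogonal basis with unit diagonal, and scalar multiplication by $\lambda\in\mathfrak{o}_p^{(1)}$ on one basis vector lies in $U(L_p)$ with determinant $\lambda$, again giving index $1$. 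Therefore $e(\mathcal{G})=[\mathfrak{o}_2^{(1)}:\det U(L_2)]$ if $2$ ramifies and $e(\mathcal{G})=1$ otherwise. If the parity is odd then $\mathfrak{n}L_2=\mathfrak{o}_2$ supplies a vector $v$ with $h(v,v)\in\mathfrak{o}_2^*$; it splits off the unimodular line $\mathfrak{o}_2 v$, whose scalar automorphisms realize all of $\mathfrak{o}_2^{(1)}$, so the index is $1$. If the rank is odd and $2$ ramifies, the dyadic decomposition of $L_2$ into modular blocks of rank $1$ and $2$ settles it: a rank-one unimodular block is odd, so an even unimodular lattice is a sum of rank-two blocks and has even rank; odd rank therefore forces odd parity, reducing to the previous case.

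The one case left is $2$ ramified with even parity, hence even rank, and here lies the main obstacle: there is no unit-norm vector to split off a line, so one cannot produce determinants by the scalar trick. I would resolve it by passing to the Jacobowitz normal form of an even unimodular hermitian lattice over the ramified dyadic field $K_2$, computing the determinant image of the automorphism group of each rank-two block, and checking that these images generate a subgroup of $\mathfrak{o}_2^{(1)}$ of index at most $2$; this yields $e(\mathcal{G})\le 2$ in all cases. This explicit dyadic computation is the genuinely technical step and is exactly the input we draw from \cite{hashimoto}, whereas everything else follows from the surjectivity of the determinant and the splitting of unit-norm lines.
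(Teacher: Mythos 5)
Your proposal is correct in substance, and on the claims about $e(\mathcal{G})$ it follows essentially the paper's route: reduce the index computation to the ramified dyadic place because at every other prime (and at $2$ for odd parity) a unimodular line splits off, on which the scalar automorphisms $\operatorname{diag}(\varepsilon,1,\ldots,1)$, $\varepsilon\in\mathfrak{o}_p^{(1)}$, realize every determinant, so the local index is $1$; your extra observation that odd rank forces odd parity at a ramified $2$ (via the Jacobowitz block decomposition into rank-$1$ and rank-$2$ modular pieces) is exactly how the paper's Proposition \ref{hermitian_spaces}c) argues, and is a correct way to cover the ``rank odd'' clause. The genuine differences lie at the two ends. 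First, you sketch an actual proof of the partition statement -- the invariant $d_p(N)=\beta_p^{-1}\det(\sigma_p)\det U(L_p)$, the criterion that $N,N'\in\mathcal{G}_{\mathfrak{b}}$ are specially equivalent iff $\det(\sigma_p)\equiv\det(\sigma'_p)\bmod\det U(L_p)$ for all $p$, and surjectivity of $(d_p)_p$ by prescribing finitely many local changes and gluing -- whereas the paper simply imports this claim from \cite{hashimoto}, Lemma 2.4. Your argument is sound (the norm-one local generators $\beta_p$ do exist, since $\mathfrak{b}\in I_K^{(1)}$ makes $\mathfrak{b}_p$ trivial at inert and ramified places and of the form $(p^a,p^{-a})$ at split ones, and $\det: U(V_p)\rightarrow K_p^{(1)}$ is surjective by the scalar-on-a-line construction), and it buys a self-contained statement where the paper relies on a citation.

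The one soft spot is the final case ($2$ ramified, even parity). There the paper deduces $[\mathfrak{o}_2^{(1)}:\det U(L_2)]\le 2$ from Proposition 4.18 and Lemma 4.16 of \cite{shimura}, using that unimodularity makes $L_2$ a $2\mathfrak{o}_2$-maximal lattice, while you propose an explicit computation in Jacobowitz normal form but do not carry it out, asserting the outcome and falling back on \cite{hashimoto}. That fallback is circular in context: the paper supplies its own proof of the $e(\mathcal{G})$ claims precisely because Hashimoto's Lemma 2.4 is \emph{not} proven in its entirety on this point. Your computational plan is nevertheless viable -- whenever a hyperbolic plane $\mathbb{H}$ splits off, its diagonal torus $\operatorname{diag}(\lambda,\overline{\lambda}^{-1})$ yields the determinants $\lambda/\overline{\lambda}$ for $\lambda\in\mathfrak{o}_2^*$, and Hilbert 90 shows this subgroup has index at most $2$ in $\mathfrak{o}_2^{(1)}$ (the quotient being generated by $\pi/\overline{\pi}$ for a prime element $\pi$) -- but to be complete you must also treat the case where $L_2$ is a single non-hyperbolic even rank-$2$ block, which occurs for $d_K\equiv_8 0$. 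Either finish that rank-$2$ computation or replace the citation by Shimura's maximality results, and your proof closes.
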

Since the corresponding Lemma in \cite{hashimoto} is not proven in its entirety we give a proof on the claims about $e(\mathcal{G})$.
\begin{proof}
 The first claim of the Lemma is proven in \cite{hashimoto}. \\
 To prove the claims about $e(\mathcal{G})$ we fix some $L \in \mathcal{G}$. The computation of $e(\mathcal{G})$ simplifies to evaluating $[\mathfrak{o}_2^{(1)}:\det U(L_2)]$, since for any spot $p \in \mathbb{P}$ where $L_p$ splits an one dimensional component the associated index will be $1$, and this happens at all spots $p \ne 2$ in any case for unimodular $L$. Then we can precede any local isometry $\sigma$ between $L,L' \in \mathcal{G}$ by an automorphism of $L$ that can be represented by a matrix of the form $\operatorname{diag}(\varepsilon,1,\ldots,1)$ with respect to the aforementioned splitting and for any $\varepsilon \in \mathfrak{o}_p^{(1)}$. In particular for $\varepsilon = \det(\sigma)^{-1}$ this proves the existence of a local special isometry for $L,L' \in \mathcal{G}$. \\ 
 In particular if the rank or parity of $L$ is odd this implies directly $e(\mathcal{G})=1$, since then $L_2$ does posses an orthogonal basis.\\ 
 For $L$ even $[\mathfrak{o}_2^{(1)}:\det U(L_2)] \le 2$ follows from Proposition $4.18$ and Lemma $4.16$ in \cite{shimura} given the $2\mathfrak{o}_2$-maximality of $L_2$ which is implied by its unimodularity. 
\end{proof}

The above sets $\mathcal{G}_{b}$ are deeply connected to partial genera, we will describe this connection in the subsequent discussion.
\begin{lemma} \label{partial:genus:description}
 Let $\mathcal{G} = \operatorname{gen}(L)$ be a genus of unimodular hermitian lattices on $V$ with $\mathfrak{st}(L) = [\mathfrak{a}]$. Then
 \begin{enumerate}
  \item[a)] $\mathcal{G} = \bigcup_{\mathfrak{b} \in I_K^{(1)}} \mathcal{G}_{\mathfrak{b}}$,
  \item[b)] $\mathcal{G}_{[\mathfrak{a}]} = \bigcup_{\mathfrak{b} \in [\mathfrak{o}]\cap I_K^{(1)}} \mathcal{G}_{\mathfrak{b}}$.
 \end{enumerate}
\end{lemma}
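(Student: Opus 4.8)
The plan is to reduce both parts to Lemma \ref{steinitz:index}, since the sets $\mathcal{G}_{\mathfrak{b}}$ are defined purely through the generalized index $[L:N]$, and every lattice in the genus $\mathcal{G}$ is unimodular. For part a) the inclusion $\bigcup_{\mathfrak{b}\in I_K^{(1)}} \mathcal{G}_{\mathfrak{b}} \subseteq \mathcal{G}$ holds by definition, so only the reverse inclusion needs an argument. Given $N \in \mathcal{G}$, both $L$ and $N$ are unimodular, so Lemma \ref{steinitz:index} a) gives $[L:N] \in I_K^{(1)}$; setting $\mathfrak{b} := [L:N]$ we obtain $N \in \mathcal{G}_{\mathfrak{b}}$ with $\mathfrak{b} \in I_K^{(1)}$. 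Thus part a) is essentially a restatement of Lemma \ref{steinitz:index} a).

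For part b) the key is to characterize membership in the partial genus $\mathcal{G}_{[\mathfrak{a}]}$ through the index. By definition a lattice $N \in \mathcal{G}$ lies in $\mathcal{G}_{[\mathfrak{a}]}$ precisely when $\mathfrak{st}(N) = [\mathfrak{a}] = \mathfrak{st}(L)$, and by Lemma \ref{steinitz:index} b) this is equivalent to $[L:N] = (\alpha)$ for some $\alpha \in K^{(1)}$. I would then show that this last condition coincides with $[L:N] \in [\mathfrak{o}] \cap I_K^{(1)}$, which is exactly membership in $\bigcup_{\mathfrak{b}\in [\mathfrak{o}]\cap I_K^{(1)}} \mathcal{G}_{\mathfrak{b}}$. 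One direction is immediate: if $[L:N] = (\alpha)$ with $\alpha\overline{\alpha} = 1$, then $[L:N]$ is principal, hence in $[\mathfrak{o}]$, and $(\alpha)\overline{(\alpha)} = (\alpha\overline{\alpha}) = \mathfrak{o}$, so $[L:N] \in I_K^{(1)}$.

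The reverse direction is the only genuinely non-formal point, and I expect it to be the main (small) obstacle: one must see that a principal fractional ideal of norm one is automatically generated by a norm-one element. Suppose $[L:N] = (\beta)$ is principal with $(\beta)\overline{(\beta)} = \mathfrak{o}$. Then $\beta\overline{\beta} = N^K_{\mathbb{Q}}(\beta)$ lies in $\mathbb{Q}$ and generates $\mathfrak{o}$, hence is a rational unit of $\mathfrak{o}$; since $K$ is imaginary-quadratic its rational units are $\pm 1$, and $\beta\overline{\beta} = |\beta|^2 > 0$ forces $\beta\overline{\beta} = 1$, i.e. $\beta \in K^{(1)}$. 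Combining the two directions, $N \in \mathcal{G}_{[\mathfrak{a}]}$ if and only if $[L:N] \in [\mathfrak{o}] \cap I_K^{(1)}$, which proves part b). Apart from this short norm computation connecting ``principal of norm one'' with ``generated by a norm-one element'', everything is bookkeeping on top of Lemma \ref{steinitz:index}.
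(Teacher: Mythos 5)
Your proposal is correct and takes essentially the same route as the paper, whose entire proof consists of citing Lemma 2.1 of \cite{hashimoto} for part a) and deducing part b) from Lemma \ref{steinitz:index}. The only differences are cosmetic: you replace the citation for a) by a direct application of Lemma \ref{steinitz:index}~a) (the very fact the paper records just before defining $\mathcal{G}_{\mathfrak{b}}$, using that all lattices in $\mathcal{G}$ are unimodular), and for b) you make explicit the small step the paper leaves implicit, namely that in an imaginary-quadratic field a principal fractional ideal of norm one is automatically generated by a norm-one element, since $\beta\overline{\beta} = |\beta|^2 > 0$ rules out the generator having norm $-1$.
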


\begin{proof}
 Part $a)$ is Lemma $2.1$ in \cite{hashimoto}, part $b)$ is then a consequence of Lemma \ref{steinitz:index}.
\end{proof}

Now let $U(V)^u := \{ \sigma \in U(V) \mid \det(\sigma) \in \mathfrak{o}^*\}$ (cf. \cite{hashimoto}, $(2.6)$). The following will show that a $U(V)^u$ class of a lattice $L$ is the union of all special classes associated to $L$ inside one of the above $\mathcal{G}_{\mathfrak{b}}$.

\begin{lemma} \label{partial:special}
 Let $\mathcal{G} = \operatorname{gen}(L)$ be a genus of unimodular hermitian lattices on $V$ with $\mathfrak{st}(L) = [\mathfrak{a}]$ and fix an arbitrary $\mathfrak{b} \in [\mathfrak{o}] \cap I_K^{(1)}$ with $\mathcal{G}_{\mathfrak{b}}$ definied with respect to $L$. Then there is a set of lattices $L_1,\ldots,L_k$ that form a set of representatives of the classes in $\mathcal{G}_{[a]}$ such that each $L_i \in \mathcal{G}_{\mathfrak{b}}$. To be precise we find a bijection $\mathcal{G}_{[\mathfrak{a}]} / U(V) \cong \mathcal{G}_{\mathfrak{b}} / U(V)^u$ (independent of the choice of $\mathfrak{b}$), that is the above lattices are also representatives of the $U(V)^u$ classes in $\mathcal{G}_{\mathfrak{b}}$.
\end{lemma}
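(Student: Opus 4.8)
The plan is to exhibit the asserted bijection as the one induced on orbit sets by the inclusion $\mathcal{G}_{\mathfrak{b}} \hookrightarrow \mathcal{G}_{[\mathfrak{a}]}$, and to read off the representatives $L_1,\ldots,L_k$ from its surjectivity. First I would record the two relevant group actions. For any $\sigma \in U(V)$ one has $\det(\sigma)\overline{\det(\sigma)} = 1$, so $\det(\sigma) \in K^{(1)}$, and since $[N : \sigma(N)] = \det(\sigma)\mathfrak{o}$ is then principal with a norm-$1$ generator, Lemma \ref{steinitz:index} b) gives $\mathfrak{st}(\sigma(N)) = \mathfrak{st}(N)$; hence $U(V)$ preserves $\mathcal{G}_{[\mathfrak{a}]}$. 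For $\sigma \in U(V)^u$ we have $\det(\sigma) \in \mathfrak{o}^*$, so from the multiplicativity $[L : \sigma(N)] = [L : N]\,[N : \sigma(N)] = [L:N]\cdot\det(\sigma)\mathfrak{o} = [L:N]$ it follows that $U(V)^u$ preserves each $\mathcal{G}_{\mathfrak{b}}$. As $\mathfrak{b} \in [\mathfrak{o}] \cap I_K^{(1)}$, Lemma \ref{partial:genus:description} b) yields $\mathcal{G}_{\mathfrak{b}} \subseteq \mathcal{G}_{[\mathfrak{a}]}$, so inclusion descends to a well-defined map $\Phi : \mathcal{G}_{\mathfrak{b}} / U(V)^u \to \mathcal{G}_{[\mathfrak{a}]} / U(V)$, whose inverse will be the claimed bijection.

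For surjectivity of $\Phi$ the key ingredient is that $\det : U(V) \to K^{(1)}$ is onto: fixing an orthogonal basis of $V$ (which exists because $h$ is a hermitian form over the field $K$), the map $\operatorname{diag}(\lambda,1,\ldots,1)$ is unitary exactly when $\lambda\overline{\lambda} = 1$, so every $\lambda \in K^{(1)}$ occurs as a determinant. Given $N \in \mathcal{G}_{[\mathfrak{a}]}$, Lemma \ref{partial:genus:description} b) provides a principal $\mathfrak{c} = (\gamma) \in [\mathfrak{o}]\cap I_K^{(1)}$ with $[L:N] = \mathfrak{c}$; writing $\mathfrak{b} = (\beta)$ with $\beta,\gamma \in K^{(1)}$ and choosing $\sigma \in U(V)$ with $\det(\sigma) = \beta\gamma^{-1}$, I get $[L:\sigma(N)] = \mathfrak{c}\cdot(\beta\gamma^{-1}) = (\beta) = \mathfrak{b}$, so $\sigma(N) \in \mathcal{G}_{\mathfrak{b}}$ represents the class of $N$. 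Running this over a transversal of $\mathcal{G}_{[\mathfrak{a}]}/U(V)$ produces the desired representatives $L_1,\ldots,L_k \in \mathcal{G}_{\mathfrak{b}}$ and shows $\Phi$ is onto.

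Injectivity is then immediate from the same index calculus: if $N,N' \in \mathcal{G}_{\mathfrak{b}}$ with $N' = \sigma(N)$ for some $\sigma \in U(V)$, then $\mathfrak{b} = [L:N'] = [L:N]\,[N:N'] = \mathfrak{b}\cdot\det(\sigma)\mathfrak{o}$ forces $\det(\sigma)\mathfrak{o} = \mathfrak{o}$, i.e. $\det(\sigma) \in \mathfrak{o}^*$ and hence $\sigma \in U(V)^u$. Thus $N$ and $N'$ already lie in one $U(V)^u$-orbit, so $\Phi$ is injective, and the $L_i$ are simultaneously representatives of the $U(V)^u$-classes in $\mathcal{G}_{\mathfrak{b}}$.

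Finally, for independence of $\mathfrak{b}$, I would note that $U(V)^u = \det^{-1}(\mathfrak{o}^*\cap K^{(1)})$ is the preimage of a subgroup under the homomorphism $\det$, hence normal in $U(V)$; for a second choice $\mathfrak{b}' = (\beta') \in [\mathfrak{o}]\cap I_K^{(1)}$, any $\tau \in U(V)$ with $\det(\tau) = \beta'\beta^{-1}$ carries $\mathcal{G}_{\mathfrak{b}}$ onto $\mathcal{G}_{\mathfrak{b}'}$ and induces a bijection $\mathcal{G}_{\mathfrak{b}}/U(V)^u \to \mathcal{G}_{\mathfrak{b}'}/U(V)^u$ compatible with the two copies of $\Phi$, so the induced identification with $\mathcal{G}_{[\mathfrak{a}]}/U(V)$ is insensitive to $\mathfrak{b}$. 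I expect the only genuine content to be the surjectivity of $\det$ on $U(V)$ — everything else is bookkeeping with the multiplicative generalized index — so the point I would take care to justify is the existence of an orthogonal basis and the resulting diagonal unitary maps realizing arbitrary norm-$1$ determinants.
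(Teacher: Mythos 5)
Your proof is correct and takes essentially the same approach as the paper: like the paper, you move a transversal of $\mathcal{G}_{[\mathfrak{a}]}/U(V)$ into $\mathcal{G}_{\mathfrak{b}}$ using the surjectivity of $\det : U(V) \to K^{(1)}$, and use the index computation $[M:M'] = \det(\sigma)\mathfrak{o}$ to show that any isometry between lattices in $\mathcal{G}_{\mathfrak{b}}$ lies in $U(V)^u$. Your added justifications --- realizing arbitrary norm-$1$ determinants via $\operatorname{diag}(\lambda,1,\ldots,1)$ on an orthogonal basis (which the paper merely asserts as $\det(U(V)) = K^{(1)}$) and the explicit independence-of-$\mathfrak{b}$ argument via a conjugating $\tau$ --- are correct but supplementary.
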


\begin{proof}
 Let $\mathcal{G}_{\mathfrak{b}}$ be definied with respect to $L$. Choose any representative set $L_1,\ldots,L_k$ of the classes of lattices in $\mathcal{G}_{[a]}$. Set $\mathfrak{d}_i := [L:L_i]$. If not $\mathfrak{d}_i = \mathfrak{b}$ we choose $\rho \in U(V)$ such that $\mathfrak{b} = [L:\rho(L_i)] = \det(\rho)\mathfrak{d}_i$, and replace $L_i$ by $\rho(L_i)$. The latter is possible since $\det(U(V)) = K^{(1)}$ and in any case $\mathfrak{b}$ and $\mathfrak{d}_i$ are principal fractional ideals of norm $1$ (cf. Lemma \ref{steinitz:index}).\\ 
 Let $\operatorname{cls}(M)=\operatorname{cls}(M')$ for $M,M' \in \mathcal{G}_{\mathfrak{b}}$. In this case $[M:M'] = \mathfrak{o}$, since $[L:M'] = [L:M][M:M']$ and $[L:M]=\mathfrak{b} = [L:M']$. Now there is a $\sigma \in U(V)$ such that $\sigma(M) = M'$, and by the above $\mathfrak{o} = [M:M'] = [M:\sigma(M)] = \det(\sigma)\mathfrak{o}$, implying $\det(\sigma) \in \mathfrak{o}^*$, but this just says that $\sigma \in U(V)^u$. Thus the above set of representatives of classes of $L$ in $\mathcal{G}_{[\mathfrak{a}]}$ is also a set of representatives of $U(V)^u$ classes of $L$ in $\mathcal{G}_{\mathfrak{b}}$.
\end{proof}

\begin{lemma} \label{class:partition}
Let $L$ be a unimodular lattice on $V$. Then the $U(V)^u$ class of $L$ is the disjoint union of exactly $e(L):=[\mathfrak{o}^*:\det{U(L})]$ special classes.
\end{lemma}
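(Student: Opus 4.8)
The plan is to realize the partition of the $U(V)^u$-class of $L$ into special classes as a double coset space and then to compute its cardinality by applying the determinant. First I would record the two inclusions that make the statement meaningful: every $\sigma \in SU(V)$ satisfies $\det(\sigma) = 1 \in \mathfrak{o}^*$, so $SU(V) \subseteq U(V)^u$, and every $\sigma \in U(L)$ has $\det(\sigma) \in \mathfrak{o}^*$ (since $\sigma$ and $\sigma^{-1}$ both preserve $L$, so $\det(\sigma) \in \mathfrak{o}$ is a unit), so $U(L) \subseteq U(V)^u$. The first inclusion shows that any special class meeting the $U(V)^u$-class of $L$ is a full $SU(V)$-orbit contained in it; as distinct orbits are disjoint, the $U(V)^u$-class is automatically a \emph{disjoint} union of special classes, and only the count remains to be established.

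Next I would set up the standard orbit bijection. Sending $\sigma \mapsto \sigma(L)$ identifies the $U(V)^u$-class of $L$ with the coset space $U(V)^u/U(L)$, and under this identification the action of $SU(V)$ by $\rho \cdot \sigma(L) = (\rho\sigma)(L)$ corresponds to left translation on cosets. Hence the special classes inside the $U(V)^u$-class correspond bijectively to the double cosets $SU(V)\backslash U(V)^u / U(L)$. Because $SU(V)$ is the kernel of $\det$ restricted to $U(V)^u$, it is normal, so $SU(V)\,U(L)$ is a subgroup and the number of double cosets equals $[U(V)^u : SU(V)\,U(L)]$. Passing to the quotient by $SU(V)$ and using that $\det$ induces an isomorphism $U(V)^u/SU(V) \cong \det(U(V)^u)$ which carries the image of $U(L)$ onto $\det(U(L))$, this index becomes $[\det(U(V)^u) : \det(U(L))]$.

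It then remains to identify $\det(U(V)^u)$ with $\mathfrak{o}^*$, and this is the step carrying the actual content. On the one hand $\det(U(V)^u) \subseteq \mathfrak{o}^* \cap K^{(1)}$, using $\det(U(V)) = K^{(1)}$ together with the definition of $U(V)^u$. On the other hand, fixing an orthogonal basis of $V$ (available by orthogonalization over a field of characteristic $\ne 2$) and any $\varepsilon \in \mathfrak{o}^*$ with $\varepsilon\overline{\varepsilon} = 1$, the map $\operatorname{diag}(\varepsilon,1,\ldots,1)$ is unitary with determinant $\varepsilon$ and lies in $U(V)^u$; thus $\det(U(V)^u) = \mathfrak{o}^* \cap K^{(1)}$. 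Finally I would invoke the special feature of imaginary-quadratic fields: every unit is a root of unity and hence has norm $1$, so $\mathfrak{o}^* \cap K^{(1)} = \mathfrak{o}^*$. Combining these, the number of special classes equals $[\mathfrak{o}^* : \det(U(L))] = e(L)$, as claimed. The main obstacle is precisely this surjectivity computation together with the identity $\mathfrak{o}^* \cap K^{(1)} = \mathfrak{o}^*$; the double coset bookkeeping is purely formal once $SU(V)$ is recognized as a normal subgroup of $U(V)^u$.
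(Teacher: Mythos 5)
Your proof is correct, but it is worth knowing that the paper does not actually prove this lemma: its entire ``proof'' is the citation ``This is contained in the proof of Proposition $2.5$ in \cite{hashimoto}.'' So you have supplied a complete, self-contained argument where the paper defers to the literature. Your route --- identifying the $U(V)^u$-class with the coset space $U(V)^u/U(L)$, the special classes inside it with the double cosets $SU(V)\backslash U(V)^u/U(L)$, using that $SU(V)=\ker(\det|_{U(V)^u})$ is normal so that the count becomes $[U(V)^u : SU(V)\,U(L)] = [\det(U(V)^u):\det(U(L))]$, and then computing $\det(U(V)^u)=\mathfrak{o}^*\cap K^{(1)}=\mathfrak{o}^*$ --- is the standard one and in substance matches the argument in Hashimoto--Koseki. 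Two of your steps dovetail nicely with the rest of the paper: the realization of any $\varepsilon\in\mathfrak{o}^*$ with $\varepsilon\overline{\varepsilon}=1$ as $\det$ of $\operatorname{diag}(\varepsilon,1,\ldots,1)$ relative to an orthogonal basis is exactly the device the paper uses locally in its proof of Lemma \ref{genus:partition}, and the identity $\mathfrak{o}^*\cap K^{(1)}=\mathfrak{o}^*$ (units of an imaginary-quadratic field are roots of unity, hence of norm $1$) is precisely what legitimizes the paper's normalization $e(L)=[\mathfrak{o}^*:\det U(L)]$ as an index inside all of $\mathfrak{o}^*$ rather than inside the norm-one units; you were right to flag this as the step carrying the content. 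One cosmetic improvement: your justification of $U(L)\subseteq U(V)^u$ (``$\sigma$ and $\sigma^{-1}$ both preserve $L$, so $\det(\sigma)\in\mathfrak{o}$ is a unit'') tacitly needs that $\rho(L)\subseteq L$ forces $\det\rho\in\mathfrak{o}$ even when $L$ is not free; this is immediate from the paper's generalized index, since $\det(\sigma)\mathfrak{o}=[L:\sigma(L)]=[L:L]=\mathfrak{o}$, which settles it in one line. Note also that your argument never uses unimodularity of $L$, so it proves the statement for arbitrary lattices on $V$ --- a harmless strengthening of the lemma as stated.
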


\begin{proof}
 This is contained in the proof of Proposition $2.5$ in \cite{hashimoto}.
\end{proof}

\begin{corollary} \label{class:number:coincide}
 If $d_K \notin\{ -3, -4\}$, the special and partial class number associated to a unimodular lattice of odd rank coincide.
\end{corollary}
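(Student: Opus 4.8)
The plan is to show that the index $e(L) = [\mathfrak{o}^* : \det U(L)]$ appearing in Lemma \ref{class:partition} equals $1$ for every unimodular lattice $L$ of odd rank when $d_K \notin \{-3,-4\}$, and then to read off the asserted equality from the chain of identifications already established in Lemmas \ref{genus:partition}, \ref{partial:special} and \ref{class:partition}.

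First I would recall the structure of the unit group of an imaginary-quadratic field: $\mathfrak{o}^*$ is finite, and the only cases with more than two units are $d_K = -4$ (where $|\mathfrak{o}^*| = 4$) and $d_K = -3$ (where $|\mathfrak{o}^*| = 6$); thus the hypothesis $d_K \notin \{-3, -4\}$ is precisely the statement $\mathfrak{o}^* = \{\pm 1\}$. The key observation is then that $-\operatorname{id}_V \in U(L)$ for any lattice $L$, with $\det(-\operatorname{id}_V) = (-1)^{\operatorname{rk}(L)} = -1$ since the rank is odd. As every $\sigma \in U(L)$ satisfies $[L:\sigma(L)] = [L:L] = \mathfrak{o} = \det(\sigma)\mathfrak{o}$, i.e.\ $\det(\sigma) \in \mathfrak{o}^*$, this forces $\det U(L) = \{\pm 1\} = \mathfrak{o}^*$ and hence $e(L) = 1$, independently of the isometry class of $L$.

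With this in place I would assemble the bijections. Since the rank is odd, Lemma \ref{genus:partition} yields $e(\mathcal{G}) = 1$, so each set $\mathcal{G}_{\mathfrak{b}}$ is a single special genus; taking $\mathfrak{b} = \mathfrak{o}$ identifies $\mathcal{G}_{\mathfrak{o}}$ with the special genus $\operatorname{gen}^+(L)$. By Lemma \ref{partial:special} the partial class number $h_{\mathcal{G}_{[\mathfrak{a}]}}$ equals the number of $U(V)^u$-classes in $\mathcal{G}_{\mathfrak{b}}$. Finally, applying Lemma \ref{class:partition} together with $e(L) = 1$ shows that every $U(V)^u$-class in $\mathcal{G}_{\mathfrak{b}}$ consists of exactly one special class, so the number of $U(V)^u$-classes coincides with the special class number of $\mathcal{G}_{\mathfrak{b}} = \operatorname{gen}^+(L)$. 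Chaining these equalities gives that the partial and special class numbers agree.

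The argument is essentially an orchestration of the preceding lemmas rather than a new computation, so I do not expect a serious obstacle. The one point that genuinely uses all the hypotheses is the reduction $e(L) = 1$: it needs both odd rank (to make $\det(-\operatorname{id}_V) = -1$) and $d_K \notin \{-3, -4\}$ (so that $\{\pm 1\}$ is all of $\mathfrak{o}^*$), and it must hold uniformly across every representative lattice, which the $-\operatorname{id}_V$ argument guarantees precisely because it is insensitive to the particular class.
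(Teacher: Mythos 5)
Your proposal is correct and takes essentially the same route as the paper's own proof: both establish $e(L)=[\mathfrak{o}^*:\det U(L)]=1$ via $-\operatorname{id}\in U(L)$ with $\det(-\operatorname{id})=(-1)^n=-1$ together with $|\mathfrak{o}^*|=2$ for $d_K\notin\{-3,-4\}$, and then chain Lemmas \ref{genus:partition}, \ref{partial:special} and \ref{class:partition} to identify the partial class number with the special class number. Your additional remark that $\det(\sigma)\in\mathfrak{o}^*$ for $\sigma\in U(L)$ (via the generalized index) is a harmless elaboration of a step the paper leaves implicit.
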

\begin{proof}
Let $L$ be a lattice of odd rank $n$ with $\mathfrak{st}(L) = [\mathfrak{a}]$. We note that $-\operatorname{id} \in U(L)$ and $\det(-\operatorname{id}) = (-1)^n=-1$. Since $d_K \notin\{ -3,-4\}$, we have $|\mathfrak{o}^*| = 2$, and therefore $e(L)=[\mathfrak{o}^*:\det{U(L})] = 1$. Moreover Lemma \ref{genus:partition} states that for each $\mathfrak{b} \in [\mathfrak{a}] \cap I_K^{(1)}$ the set $\mathcal{G}_{\mathfrak{b}}$ is in fact a single special genus. From Lemma \ref{partial:special} we then get that $h_{[\mathfrak{a}]} = |\mathcal{G}_{\mathfrak{b}} / U(V)^u| = |\mathcal{G}_{\mathfrak{b}} / SU(V)| = h^+(\operatorname{gen}^+(L))$, since $U(V)^u$ and special class of lattices in $\mathcal{G}$ coincide following Lemma \ref{class:partition}.
 \end{proof}

\subsection{Ideal genera and genus characters}
A \emph{genus character} of $K$ is a real-valued character of the class group that is a homomorphism $\chi: \mathcal{C}_K \rightarrow \{ \pm 1 \}$.
\begin{proposition}[cf. \cite{zag} §12 Satz 1]  \label{genus:equiv}
 Let $[\mathfrak{a}], [\mathfrak{b}] \in \mathcal{C}_K$, then the following three statements are equivalent:
 \begin{enumerate}
  \item[a)] $\chi([\mathfrak{a}]) = \chi([\mathfrak{b}])$ for every genus character $\chi$ of $K$.
  \item[b)] There is a $\lambda \in K$ with $N(\mathfrak{a}) = N(\lambda) \cdot N(\mathfrak{b})$.
  \item[c)] There is a $[\mathfrak{c}] \in \mathcal{C}_K$ with $[\mathfrak{a}] = [\mathfrak{c}]^2 \cdot [\mathfrak{b}]$. \qed
 \end{enumerate}
\end{proposition}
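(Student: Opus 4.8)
The three conditions separate cleanly: (a)$\Leftrightarrow$(c) is pure duality of finite abelian groups, (c)$\Rightarrow$(b) is an elementary norm computation, and the substance of the statement lies in the remaining implication (b)$\Rightarrow$(c), which is the principal genus theorem for $K$. I would organize the proof around this trichotomy.

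\emph{For (a)$\Leftrightarrow$(c)} the plan is to use that the genus characters are exactly the homomorphisms $\mathcal{C}_K \to \{\pm 1\}$, i.e. the characters of order dividing $2$; these factor through the finite elementary abelian $2$-group $\mathcal{C}_K/\mathcal{C}_K^2$ and, via evaluation, constitute its Pontryagin dual. By non-degeneracy of that pairing an element $g \in \mathcal{C}_K$ is annihilated by every order-$2$ character precisely when its image in $\mathcal{C}_K/\mathcal{C}_K^2$ is trivial, i.e. $g \in \mathcal{C}_K^2$. Applying this to $g = [\mathfrak{a}][\mathfrak{b}]^{-1}$, and noting that $\chi([\mathfrak{a}]) = \chi([\mathfrak{b}]) \Leftrightarrow \chi(g) = 1$ because the values lie in $\{\pm 1\}$, turns (a) into the assertion $[\mathfrak{a}][\mathfrak{b}]^{-1} \in \mathcal{C}_K^2$, which is exactly (c).

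\emph{For (c)$\Rightarrow$(b)} I would write $\mathfrak{a} = (\mu)\,\mathfrak{c}^2\,\mathfrak{b}$ with $\mu \in K^*$ and take absolute norms to get $N(\mathfrak{a}) = N(\mu)\,N(\mathfrak{c})^2\,N(\mathfrak{b})$. Since $r := N(\mathfrak{c}) \in \Q_{>0} \subseteq K$ satisfies $N^K_{\Q}(r) = r\overline{r} = r^2$, the factor $N(\mathfrak{c})^2$ is itself an element norm, so $\lambda := \mu r$ works. The same computation shows that the ideal-norm map induces a well-defined homomorphism $\nu : \mathcal{C}_K \to \Q_{>0}/N^K_{\Q}(K^*)$, $[\mathfrak{d}] \mapsto N(\mathfrak{d})$, with $\mathcal{C}_K^2 \subseteq \ker\nu$; and rewriting (b) as $N(\mathfrak{a}\mathfrak{b}^{-1}) \in N^K_{\Q}(K^*)$ identifies (b) with the condition $[\mathfrak{a}][\mathfrak{b}]^{-1} \in \ker\nu$. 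Thus (b)$\Rightarrow$(c) reduces exactly to the reverse inclusion $\ker\nu \subseteq \mathcal{C}_K^2$.

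\emph{The main obstacle} is precisely this inclusion, and here I would invoke class field theory together with Gauss's genus count. By the Hasse norm theorem for the cyclic extension $K/\Q$, a positive rational lies in $N^K_{\Q}(K^*)$ iff all its local norm-residue symbols $(\,\cdot\,,K/\Q)_p$ are trivial, so $\nu$ is detected by the symbols $(N(\mathfrak{d}),K/\Q)_p$. These vanish at split $p$ (every local element is a norm), at inert $p$ (there $v_p\circ N$ is even), and at the archimedean place (trivial on positive reals); hence only the $t$ ramified primes contribute, and the product formula forces $\prod_{p \text{ ram}} (N(\mathfrak{d}),K/\Q)_p = 1$. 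This confines $\operatorname{im}\nu$ to a group of order at most $2^{t-1}$, while a Dirichlet-type existence argument (realizing each admissible sign pattern at the ramified primes by a split prime $q = N(\mathfrak{q})$) shows $\nu$ surjects onto it, so $|\operatorname{im}\nu| = 2^{t-1}$. Combined with the genus-theoretic count $|\mathcal{C}_K/\mathcal{C}_K^2| = |\mathcal{C}_K[2]| = 2^{t-1}$ and the inclusion $\mathcal{C}_K^2 \subseteq \ker\nu$ already at hand, the equality of indices $[\mathcal{C}_K:\ker\nu] = 2^{t-1} = [\mathcal{C}_K:\mathcal{C}_K^2]$ forces $\ker\nu = \mathcal{C}_K^2$, completing (b)$\Rightarrow$(c). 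An equivalent classical route, matching the reference \cite{zag}, would instead exhibit each genus character explicitly as $\chi = \left(\tfrac{D_1}{\,\cdot\,}\right)\circ N$ for a factorization $d_K = D_1 D_2$ into coprime discriminants, after which (b) makes $\chi([\mathfrak{a}])$ and $\chi([\mathfrak{b}])$ agree because the Kronecker symbol is multiplicative and trivial on the element norm $N(\lambda)$, yielding (a) directly. Either way, the delicate point — surjectivity of $\nu$, equivalently that the genus characters genuinely separate $\mathcal{C}_K/\mathcal{C}_K^2$ — is the crux, and is exactly the content supplied by the cited result of Zagier.
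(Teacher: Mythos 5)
Your proof is correct, but note that the paper contains no proof of this proposition to compare against: it is quoted from \cite{zag} (\S 12, Satz 1) with the proof omitted and the genus theory delegated to that reference. The meaningful comparison is with the machinery the paper develops immediately \emph{after} the proposition, and there your argument runs the same computation with the logical arrows reversed. Your reduction of a) $\Leftrightarrow$ c) to duality is sound, since the paper defines a genus character to be an arbitrary homomorphism $\mathcal{C}_K \rightarrow \{\pm 1\}$; these are precisely the characters of $\mathcal{C}_K/\mathcal{C}_K^2$, which separate its points. Your norm computation for c) $\Rightarrow$ b) is the standard one and is fine (for imaginary-quadratic $K$ the norm of a principal ideal $(\mu)$ is the element norm $N(\mu) > 0$, so no sign issue arises). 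For b) $\Rightarrow$ c), your homomorphism $\nu$ and its detection by local norm-residue symbols reproduce exactly the unnumbered Lemma and the map $\mathcal{X}: \mathcal{C}_K \rightarrow \mathcal{H}$ that follow the proposition in the text: triviality of $(N(\mathfrak{a}),K/\mathbb{Q})_p$ at unramified $p$ and at infinity, Hilbert reciprocity over the ramified primes, and the Hasse norm theorem identifying $\ker \mathcal{X}$ with $\ker \nu$. The difference is the direction of deduction: the paper \emph{uses} b) $\Rightarrow$ c) (i.e.\ the citation) to compute $\ker\mathcal{X} = \mathcal{C}_K^2$ and then deduces surjectivity of $\mathcal{X}$ onto $\mathcal{H}$ from the count $|\mathcal{C}_K/\mathcal{C}_K^2| = 2^{t-1}$, whereas you prove surjectivity of $\mathcal{X}$ directly via Dirichlet and combine it with the same count to \emph{recover} b) $\Rightarrow$ c). Your route buys a self-contained proof of the proposition, and of the surjectivity statement, where the paper settles for a citation.

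Two points in your b) $\Rightarrow$ c) argument deserve explicit care, though neither is a gap. First, the input $|\mathcal{C}_K/\mathcal{C}_K^2| = 2^{t-1}$ must be sourced independently of the Satz being proved, or the argument is circular; it can be --- this is Gauss's ambiguous class number formula (the $2$-rank of $\mathcal{C}_K$ equals $t-1$), provable by counting ambiguous ideal classes without any principal genus theorem --- but you should say so, since in the cited reference the corresponding Korollar sits downstream of Satz 1. Second, the Dirichlet existence step is only sketched; to complete it, given $(\varepsilon_p)_{p \mid d_K} \in \mathcal{H}$, choose by the Chinese remainder theorem and Dirichlet's theorem a prime $q \nmid 2d_K$ with $(q,d_K)_p = \varepsilon_p$ at every ramified $p$ (possible because at a ramified $p$ the symbol restricted to units is a nontrivial function of $q$ modulo $p$, respectively modulo $8$ for $p=2$); Hilbert reciprocity then forces $\chi_K(q) = \left(\tfrac{d_K}{q}\right) = \prod_p \varepsilon_p = 1$, so $q$ splits \emph{automatically}, $q = N(\mathfrak{q})$ for a prime ideal $\mathfrak{q}$, and $\mathcal{X}([\mathfrak{q}]) = (\varepsilon_p)_p$ as required. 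With these two remarks supplied, your proof is complete.
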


If the equivalent conditions of \ref{genus:equiv} are fulfilled we say that $[\mathfrak{a}], [\mathfrak{b}] \in \mathcal{C}_K$ belong to the same \emph{ideal genus}. The set of all ideal classes in the same genus as $[\mathfrak{a}]$ is denoted by $\llbracket[\mathfrak{a}]\rrbracket$ or simply by $\llbracket\mathfrak{a}\rrbracket$.\\ 

Since $|\mathcal{C}_K/\mathcal{C}^2_K|=2^{t-1}$ (cf. \cite{zag} §12 Korollar), there are exactly $2^{t-1}$ ideal genera and since ideal genera are precisely the cosets of $\mathcal{C}_K^2$ in $\mathcal{C}_K$, all ideal genera contain the same number of classes, namely $\tfrac{h_K}{2^{t-1}}$. Furthermore, the set of genus characters is precisely the dual group of the (abelian) group $\mathcal{C}_K/\mathcal{C}^2_K$ and therefore there are exactly $2^{t-1}$ genus characters.

\begin{lemma}
 Let $[\mathfrak{a}] \in \mathcal{C}_K$. Then we have $(N(\mathfrak{a}),K/\mathbb{Q})_p = 1$  for all unramified primes $p\in \mathbb{P}$ and moreover
 \begin{align*}
  \prod_{p \text{ ramified}} (N(\mathfrak{a}),K/\mathbb{Q})_p=1.
 \end{align*}
\end{lemma}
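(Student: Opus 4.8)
The plan is to deduce both assertions from Hilbert reciprocity for the quadratic extension $K/\mathbb{Q}$, that is from the product formula $\prod_{v}(N(\mathfrak{a}),K/\mathbb{Q})_v = 1$, where $v$ runs over all places of $\mathbb{Q}$ (the finite primes together with the infinite place, at which we read $(\beta,K/\mathbb{Q})_\infty = +1$ iff $\beta \in N^{\mathbb{C}}_{\mathbb{R}}(\mathbb{C}^\ast)$). This is legitimate because for $K=\mathbb{Q}(\sqrt{D})$ the norm residue symbol agrees with the Hilbert symbol $(\,\cdot\,,d_K)_v$, for which the product formula is classical. The point of the argument is that the first assertion is the genuine local input, and once it and the value at the infinite place are known, the second assertion falls out of the product formula for free.

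For the first assertion I would distinguish the two unramified splitting types. If $p$ is split, then $K_p \cong \mathbb{Q}_p\times\mathbb{Q}_p$ and the norm $(x,y)\mapsto xy$ is already surjective onto $\mathbb{Q}_p^\ast$; hence every element, and in particular $N(\mathfrak{a})$, is a local norm and $(N(\mathfrak{a}),K/\mathbb{Q})_p = +1$. If $p$ is inert, then $K_p/\mathbb{Q}_p$ is the unramified quadratic field extension, for which $N^{K_p}_{\mathbb{Q}_p}(K_p^\ast)$ is exactly the set of elements of even $p$-adic valuation: the norm is surjective on units, while a uniformizer $p$ of $\mathbb{Q}_p$ satisfies $v_p(N(p)) = 2$. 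It therefore suffices to check that $v_p(N(\mathfrak{a}))$ is even. Writing $\mathfrak{a}=\prod_{\mathfrak{q}}\mathfrak{q}^{e_{\mathfrak{q}}}$, the only prime above $p$ is $\mathfrak{p}=p\mathfrak{o}$ with $N(\mathfrak{p})=p^2$, so its contribution to $N(\mathfrak{a})$ is $p^{2e_{\mathfrak{p}}}$, whereas every other $N(\mathfrak{q})$ is a $p$-adic unit; thus $v_p(N(\mathfrak{a}))=2e_{\mathfrak{p}}$ is even and the symbol is $+1$.

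Turning to the second assertion, I would evaluate the symbol at the infinite place. Since $D<0$ we have $K_\infty=\mathbb{C}$ over $\mathbb{R}$, and the norm $z\mapsto z\overline{z}=|z|^2$ has image $\mathbb{R}_{>0}$, so $(\beta,K/\mathbb{Q})_\infty=+1$ precisely when $\beta>0$. As the ideal norm $N(\mathfrak{a})$ is a positive rational, the infinite symbol equals $+1$. Substituting this and the vanishing contributions of all unramified finite primes into the product formula leaves exactly $\prod_{p\text{ ramified}}(N(\mathfrak{a}),K/\mathbb{Q})_p=1$, as desired.

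The only delicate step is the inert-prime computation: one must identify $N^{K_p}_{\mathbb{Q}_p}(K_p^\ast)$ as the even-valuation subgroup (combining surjectivity of the norm on units with $v_p(N(p))=2$) and match this against the parity of $v_p(N(\mathfrak{a}))$ read off from $N(\mathfrak{p})=p^2$. Everything else---the split case, the infinite place, and the passage to the ramified product---is a formal consequence of the positivity of the ideal norm and of reciprocity.
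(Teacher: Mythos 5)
Your proof is correct, and for the unramified primes it takes a genuinely different route from the paper. The paper first replaces $\mathfrak{a}$ by an integral representative of its class coprime to $p\mathfrak{o}$ and then invokes the standard fact that the Hilbert symbol of two $p$-adic units is $+1$ for odd $p$, with $p=2$ handled by the observation that $2$ unramified forces $D\equiv_4 1$; the ramified product then follows from Hilbert reciprocity, exactly as in your last step. You instead compute the local norm groups directly: trivially in the split case, and via the even-valuation description of $N^{K_p}_{\mathbb{Q}_p}(K_p^\ast)$ in the inert case, where you check $v_p(N(\mathfrak{a}))=2e_{\mathfrak{p}}$ for an arbitrary fractional representative. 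Each approach buys something: the paper's is shorter if one takes the unit-unit Hilbert symbol values as known, while yours is more self-contained, avoids changing the representative within the class (and thereby implicitly re-verifies that the symbol is well defined on ideal classes, a point the paper's coprimality move also quietly relies on), and makes explicit the contribution of the infinite place --- namely that $(N(\mathfrak{a}),K/\mathbb{Q})_\infty=+1$ because $N(\mathfrak{a})>0$ and the norm from $K_\infty=\mathbb{C}$ has image $\mathbb{R}_{>0}$ --- which the paper's appeal to reciprocity needs but does not state.
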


\begin{proof}
 It is well known that in the case of a quadratic field $(\alpha,K/\mathbb{Q})_p = (\alpha,D)_{p}$, the latter being the Hilbert symbol at $p$. Let $p$ be an unramified prime, thus $D$ is a unit in $\Q_p$.
 Because there is always an integral ideal in $[\mathfrak{a}]$ which is coprime to $p\mathfrak{o}$ we can assume that $\gcd( N(\mathfrak{a}),p) = 1$. For $p \neq 2$ we are done, because the Hilbert symbol of two units always equals $1$ in that case. If $p = 2$ we have $D \equiv_4 1$, which now implies the same as before. \\ 
 The product formula now follows immediately from the Hilbert reciprocity law.
\end{proof}

Now consider the character $\chi_p : \mathcal{C}_K \rightarrow \{\pm1\}$, $\left[ \mathfrak{a} \right] \mapsto (N(\mathfrak{a}),K/\mathbb{Q})_p$, which is a genus character for every $p \in \mathbb{P}$. By the above Lemma we have $\chi_p \equiv 1$ for every unramified $p$. \\
The image of the homomorphism \vspace{0.2cm}
\begin{center} $\mathcal{X} : \mathcal{C}_K \rightarrow \{\pm1 \}^{t}$, $\left[ \mathfrak{a} \right] \mapsto \left( \chi_p\left(\left[ \mathfrak{a} \right] \right) \right)_{p \text{ ramified}}$,
\end{center} \vspace{0.2cm}
is a subset of $\mathcal{H}:=\{ (x_1,\ldots,x_t)\in\{\pm1 \}^{t} \mid \prod x_i = 1 \}$. To determine the kernel of $\mathcal{X}$, let $[\mathfrak{a}] \in \operatorname{ker}\mathcal{X}$ be arbitrarily chosen, then $(N(\mathfrak{a}),K/\mathbb{Q})_p = 1$ for all $p$, so that $N(\mathfrak{a})$ is a norm for each $p \in \mathbb{P}$. By the Hasse norm Theorem (cf. \cite{omeara} 65:23), there exists $\lambda \in K$ such that $N(\mathfrak{a}) = N(\lambda) = N(\lambda) \cdot N(\mathfrak{o})$. By Proposition \ref{genus:equiv} this is equivalent to $[\mathfrak{a}] = [\mathfrak{c}]^2$ for some $[\mathfrak{c}] \in \mathcal{C}_K$, thus $[\mathfrak{a}] \in \mathcal{C}_K^2$ and $\operatorname{ker}\mathcal{X} \subseteq \mathcal{C}_K^2$. 
The inclusion $\mathcal{C}_K^2 \subseteq\operatorname{ker}\mathcal{X}$ is obvious, therefore $\mathcal{X}$ induces an injective homomorphism: 
\begin{align*}
\overline{\mathcal{X}} : \mathcal{C}_K / \mathcal{C}_K^2 \hookrightarrow \mathcal{H}.
\end{align*}\vspace{0.2cm}
Since there are $2^{t-1}$ ideal genera, i.e. $|\mathcal{C}_K/\mathcal{C}_K^2|=2^{t-1}$, the map $\overline{\mathcal{X}}$ is bijective. In particular, the map $\mathcal{X} : \mathcal{C}_K \rightarrow \mathcal{H}$ is surjective and we have proved:
 \begin{proposition} \label{cor:ideal:-1at2}
  If $d_K$ is even, $d_K \notin\{ -4,-8\}$, there is an ideal class $[\mathfrak{a}]\in \mathcal{C}_K$, such that $$(N(\mathfrak{a}),K/\mathbb{Q})_2 = -1.$$
 \end{proposition}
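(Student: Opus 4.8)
The plan is to obtain the statement as a quick consequence of the surjectivity of $\mathcal{X}: \mathcal{C}_K \to \mathcal{H}$ established in the paragraph preceding the proposition. Since $\chi_2$ is the coordinate character attached to the prime $2$, it suffices to produce a single element of $\mathcal{H}$ whose $2$-coordinate equals $-1$; surjectivity then hands us a class $[\mathfrak{a}]$ realizing it. So the real work is to check that (i) $2$ genuinely occurs among the ramified primes indexing $\mathcal{H}$, and (ii) there are at least two ramified primes, so that $\mathcal{H}$ is large enough to contain such an element.

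For (i), I would note that $d_K$ even puts us in the case $D \equiv_4 2,3$, so $2 \mid d_K$, i.e. $\chi_K(2) = 0$; by the correspondence between the values of $\chi_K$ and the splitting behaviour recalled in Section 1 this means $2$ ramifies in $K/\mathbb{Q}$. Hence the coordinate of $\mathcal{X}$ (and of $\mathcal{H}$) at $p = 2$ is present, and $\chi_2([\mathfrak{a}]) = (N(\mathfrak{a}),K/\mathbb{Q})_2$ is one of its coordinate functions. For (ii), recall that $t$ is the number of distinct prime divisors of $d_K$. If $t = 1$ while $d_K$ is even, then $2$ is the \emph{only} prime dividing $d_K$, so $d_K$ is, up to sign, a power of $2$; the only fundamental discriminants of this shape are $-4$ and $-8$. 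Thus the hypotheses $d_K$ even and $d_K \notin \{-4,-8\}$ together force $t \geq 2$. This classification of even fundamental discriminants with a single prime divisor is the only genuinely arithmetic input and the step I expect to require the most care, though it is elementary.

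Finally, with $t \geq 2$ and $2$ ramified, I would exhibit the tuple in $\mathcal{H} = \{ (x_p)_{p\text{ ramified}} \mid \prod_p x_p = 1\}$ that is $-1$ at the coordinate $p=2$, $-1$ at the coordinate of some other ramified prime $q$, and $+1$ at every remaining coordinate; its coordinate product is $+1$, so it indeed lies in $\mathcal{H}$. By the surjectivity of $\mathcal{X}$ there is a class $[\mathfrak{a}] \in \mathcal{C}_K$ with $\mathcal{X}([\mathfrak{a}])$ equal to this tuple, and in particular $(N(\mathfrak{a}),K/\mathbb{Q})_2 = \chi_2([\mathfrak{a}]) = -1$, which is exactly the assertion.
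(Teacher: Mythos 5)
Your proof is correct and follows essentially the same route as the paper's: the paper likewise deduces from $d_K$ even and $d_K \notin \{-4,-8\}$ that $2$ is ramified and $t \geq 2$, and then invokes the surjectivity of $\mathcal{X}: \mathcal{C}_K \to \mathcal{H}$ to find $[\mathfrak{a}]$ with $\mathcal{X}([\mathfrak{a}]) = (-1,-1,1,\ldots,1)$, hence $\chi_2([\mathfrak{a}]) = -1$. Your write-up merely spells out the two points the paper leaves implicit (why $t=1$ with $d_K$ even would force $d_K \in \{-4,-8\}$, and why the chosen tuple lies in $\mathcal{H}$), which is a faithful elaboration rather than a different argument.
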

\begin{proof}
 The condition that $d_K$ is even and $d_K\notin \{-4,-8\}$ implies that $2$ is ramified and $t\geq 2$. Since $\mathcal{X} : \mathcal{C}_K \rightarrow \mathcal{H}$ is surjective, there is a $[\mathfrak{a}]\in\mathcal{C}_K$ with $\mathcal{X}([\mathfrak{a}])=(-1,-1,1,\ldots,1)$, i.e. $\chi_2([\mathfrak{a}])=-1$.
\end{proof}

\section{Genera of unimodular lattices} \label{genera:lattices}
For the sake of self-containment we start with the characterization of those hermitian spaces that carry unimodular lattices, which is a well-known result. We also derive some necessary (and indeed sufficient as Theorem \ref{start:lattices} will show) conditions for a hermitian space to carry an even unimodular lattice.

\begin{proposition}\label{hermitian_spaces} Let $K = \Q(\sqrt{D})$ and $n \in \N$.
 \begin{itemize}
   \item[a)] There are exactly $2^{t-1}$  definite hermitian spaces of dimension $n$ over $K$ (up to isometry) containing a unimodular lattice of rank $n$. There is a bijection between those hermitian spaces and the ideal genera in $\mathcal{C}_K$ given by $V\mapsto \llbracket \mathfrak{st}(L)\rrbracket$ where $L$ is any unimodular lattice $L$ on $V$.  
   \item[b)] For each definite hermitian space $V$ there are at most two genera of unimodular lattices on $V$. For each parity there is at most one such genus.
   \item[c)] For the existence of an even definite unimodular lattice $L$ of rank $n$ on $V$, with $\mathfrak{st}(L) = [\mathfrak{a}]$, the following restrictions apply:
 \begin{enumerate}[i)]
 \item[i)] $d_K$ must be even.
 \item[ii)] $n$ must be even.
 \item[iii)] If $D \equiv_4 3$ then $(N(\mathfrak{a}),K/\mathbb{Q})_2=(-1)^{\frac{n}{2}}$.
 \end{enumerate}
 \end{itemize}
\end{proposition}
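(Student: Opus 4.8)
The plan is to route all three parts through the discriminant invariant $dV \in \Q^*/N^K_\Q(K^*)$, reducing the global assertions to local computations at the ramified primes, and especially at $2$. For part a) I would first read off $dV$ from the volume of a unimodular lattice. Writing $L = \mathfrak{a}_1 x_1 + \ldots + \mathfrak{a}_n x_n$ with Gram determinant $g := \det(x_1,\ldots,x_n) \in \Q_{>0}$ and $\mathfrak{a} := \prod_i \mathfrak{a}_i$, the volume formula gives $\mathfrak{v}L = g\,N(\mathfrak{a})\,\mathfrak{o}$, so unimodularity forces $g = N(\mathfrak{a})^{-1}$. Since every $a \in \Q^*$ satisfies $N^K_\Q(a) = a^2$, the group $\Q^*/N^K_\Q(K^*)$ has exponent $2$, whence $dV = [g] = [N(\mathfrak{a})]$. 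Thus $dV$ depends only on $N(\mathfrak{st}(L)) \bmod N(K^*)$, which by Proposition \ref{genus:equiv} depends only on the ideal genus $\llbracket\mathfrak{st}(L)\rrbracket$; conversely, two such spaces with the same ideal genus share the same discriminant and hence are isometric by the classification of definite spaces. So $V \mapsto \llbracket\mathfrak{st}(L)\rrbracket$ is a well-defined injection into the set of $2^{t-1}$ ideal genera. Surjectivity — the realizability of every ideal genus by a space carrying a unimodular lattice — is the classical existence input; as both sides have cardinality $2^{t-1}$, injectivity then upgrades to the asserted bijection.

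Part b) is local in character: two unimodular lattices on $V$ lie in one genus exactly when their completions agree at every $p$, and each completion lives on the fixed local space $V_p$. At split and inert $p$, and at odd ramified $p$, the local space carries a unique unimodular lattice up to isometry — here $\operatorname{Tr}(\mathfrak{o}_p) = \Z_p$ removes any even/odd freedom — so the only choice occurs at $p=2$ when $2$ ramifies, where precisely two unimodular classes arise, distinguished by $\mathfrak{n}L_2 \in \{\mathfrak{o}_2, 2\mathfrak{o}_2\}$, i.e. by the global parity. This yields at most two genera on $V$ and at most one per parity. I would cite Jacobowitz's local classification of hermitian lattices for the uniqueness statements.

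For part c), condition i) is immediate from \eqref{trace:o}: if $d_K$ is odd then $\operatorname{Tr}(\mathfrak{o}) = \Z$, so for unimodular $L$ we obtain $\mathfrak{o} = \operatorname{Tr}(\mathfrak{s}L)\mathfrak{o} \subseteq \mathfrak{n}L \subseteq \mathfrak{s}L = \mathfrak{o}$, forcing $\mathfrak{n}L = \mathfrak{o}$ and hence $L$ odd. For ii) and iii) I would pass to $L_2$ over the ramified dyadic ring: an even unimodular lattice has no unit-norm vector, hence no rank-one orthogonal summand, so by the local structure theory it is an orthogonal sum of binary even unimodular lattices, giving $n$ even. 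For iii) I would compute the local norm residue of one binary block $\mathfrak{o}_2 x + \mathfrak{o}_2 y$: the diagonal entries lie in $2\Z_2$ and $\beta := h(x,y)$ is a unit, so $\det G \equiv -N(\beta) \pmod{4\Z_2}$, whence $(dV, K/\Q)_2 = (-1,D)_2 (N(\beta),D)_2 = (-1,D)_2$ per block since $N(\beta)$ is a local norm; for $D \equiv_4 3$ one has $(-1,D)_2 = -1$, and multiplying over the $n/2$ blocks together with $dV = [N(\mathfrak{a})]$ yields $(N(\mathfrak{a}), K/\Q)_2 = (-1)^{n/2}$. The main obstacle is exactly this dyadic analysis: pinning down the isometry classes and the precise discriminant of even unimodular lattices over the ramified extension $K_2/\Q_2$, with a clean separation of the cases $D \equiv_4 2$ and $D \equiv_4 3$, is the delicate step underlying both the uniqueness in b) and the numerical constraints in c).
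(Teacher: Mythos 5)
Your proposal is correct, and it is genuinely more self-contained than the paper's own proof. For parts a) and b) the paper gives no argument at all beyond citations (Hoffmann 3.7 and Hashimoto 6.4), whereas you reconstruct a) from first principles: unimodularity forces the Gram determinant $g = N(\mathfrak{a})^{-1}$, so $dV = [N(\mathfrak{a})]$ in the exponent-two group $\Q^*/N^K_\Q(K^*)$, and then both directions of Proposition \ref{genus:equiv} give well-definedness and injectivity of $V \mapsto \llbracket\mathfrak{st}(L)\rrbracket$, with the classification of definite spaces by dimension and discriminant closing the loop; your b) is the standard localization argument. For c) your skeleton coincides with the paper's: i) is verbatim the trace argument from (\ref{trace:o}); for ii) the paper invokes Jacobowitz's normal form $L_2 \cong \mathbb{H} \perp \cdots \perp \mathbb{H} \perp N$ with $\operatorname{rk}(N)\in\{1,2\}$ and rules out a unary even unimodular block, which is the same content as your binary-block decomposition; for iii) the paper cites Jacobowitz Proposition 9.2 to get $N \cong \mathbb{H}$ in the ramified-unit case, hence $(-1)^{n/2} \in dV_2$, and then runs the identical symbol computation using $N(\mathfrak{a}) \in dV$. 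Your direct calculation — $\det G \equiv -N(\beta) \bmod 4\Z_2$ for each even unimodular binary block with unit $\beta = h(x,y)$, together with $1+4\Z_2 \subseteq N(K_2^*)$ and $(-1,D)_2 = -1$ for odd $D \equiv_4 3$ — is a correct hand-made replacement for that citation. What your route buys is independence from the dyadic classification results; what the paper's buys is brevity.

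Two small repairs are needed, neither affecting correctness. First, in a) the surjectivity you label ``the classical existence input'' should simply be exhibited: the lattice $\mathfrak{o}x_1 + \cdots + \mathfrak{o}x_{n-1} + \mathfrak{a}x_n$ on $\langle 1,\ldots,1,\tfrac{1}{N(\mathfrak{a})}\rangle$ is definite, unimodular, and has Steinitz class $[\mathfrak{a}]$ (this is exactly the paper's construction in Theorem \ref{start:lattices}a)); moreover the cardinality $2^{t-1}$ of the set of spaces is a \emph{conclusion} of the bijection, not an available input, so the phrase ``as both sides have cardinality $2^{t-1}$'' should be dropped in favour of injectivity plus this explicit surjectivity. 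Second, in b) the claim that at the ramified dyadic place ``precisely two unimodular classes arise'' is an overstatement: for odd rank, or when $dV_2$ fails the constraint your own part c) iii) establishes, only the odd class exists on $V_2$. What b) actually requires — and what Jacobowitz's ramified dyadic classification provides — is that on a fixed local space a unimodular lattice is determined up to isometry by its norm ideal, i.e.\ at most one class per parity, which then globalizes to at most one genus per parity.
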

\begin{proof}
a) See \cite{hoffmann} 3.7 or \cite{hashimoto} 6.4. b) See \cite{hashimoto} 6.4. c) Formula (\ref{trace:o}) shows that $d_K$ is necessarily even and therefore $p=2$ is ramified. By \cite{jakobowitz} Proposition $10.3$ we have $L_2 \cong \mathbb{H} \perp \hdots \perp \mathbb{H} \perp N$, where $N$ is a unimodular $\mathfrak{o}_2$-lattice of rank $1$ or $2$ with $\mathfrak{n}L_2 = \mathfrak{n}N$. Since there are no even unimodular $\mathfrak{o}_2$-lattices of rank $1$, we conclude $\operatorname{rk}(N) = 2$. This implies that $n$ is even.
If $D \equiv_4 3$ we are in the ramified-unit (RU) case (cf. \cite{jakobowitz} Chapter 5), for which \cite{jakobowitz} Proposition $9.2$ shows that $N \cong \mathbb{H}$ that is $(-1)^{\frac{n}{2}} \in dV_2$. On the other hand $N(\mathfrak{a}) \in dV$, since $\mathfrak{o} = \mathfrak{v}L = \mathfrak{a}\overline{\mathfrak{a}}\cdot \det(x_1,x_2)$, and $\det(x_1,x_2), N(\mathfrak{a})$ are positive rational numbers, we conclude $N(\mathfrak{a}) \cdot \det(x_1,x_2) = 1$. Therefore
\begin{align*}
(N(\mathfrak{a}),K/\mathbb{Q})_2 =  ((-1)^{\frac{n}{2}},K/\mathbb{Q})_2 = ((-1)^{\frac{n}{2}},D)_2= (-1)^{\frac{n}{2}},
\end{align*}
where $D \equiv_4 3$ implies that $(-1,D)_2 = (-1,3)_2 = -1$ or $(-1,D)_2 = (-1,7)_2 = -1$.
\end{proof}

The $2^{t-1}$ definite hermitian spaces containing a unimodular lattice are given by $\langle 1, \hdots, 1, \frac{1}{N(\mathfrak{a})} \rangle$, where $[\mathfrak{a}]$ runs through a set of representatives of the $2^{t-1}$ ideal genera.\\
As an easy consequence we get:
\begin{corollary}
Let $L,N$ be two  definite unimodular lattices of the same rank and parity. Then $L$ and $N$ are in the same genus if and only if $\mathfrak{st}(L)$ and $\mathfrak{st}(N)$ are in the same ideal genus, i.e.
$$ \operatorname{gen}(L)=\operatorname{gen}(N) \quad\Leftrightarrow\quad \llbracket\mathfrak{st}(L)\rrbracket=\llbracket \mathfrak{st}(N)\rrbracket. $$ 
\end{corollary}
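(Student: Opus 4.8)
The plan is to reduce everything to Proposition \ref{hermitian_spaces}, treating the two implications separately and being careful about the ambient hermitian space, since the genus is by definition a set of lattices sitting inside one fixed space $V = K\cdot L$.

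For the forward implication I would suppose $\operatorname{gen}(L) = \operatorname{gen}(N)$. Then $N$ lies in $\operatorname{gen}(L)$, so $N$ is a unimodular lattice on the same space $V$ as $L$. Part a) of Proposition \ref{hermitian_spaces} asserts that the assignment $V \mapsto \llbracket \mathfrak{st}(\cdot)\rrbracket$ is well defined, i.e.\ the ideal genus of the Steinitz class does not depend on which unimodular lattice on $V$ we pick. Applying this to $L$ and $N$ on the common space $V$ immediately yields $\llbracket \mathfrak{st}(L)\rrbracket = \llbracket \mathfrak{st}(N)\rrbracket$.

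For the converse I would suppose $\llbracket \mathfrak{st}(L)\rrbracket = \llbracket \mathfrak{st}(N)\rrbracket$ and write $V := K\cdot L$, $W := K\cdot N$ for the two ambient spaces; both are definite of dimension $n$ (the common rank) and carry a unimodular lattice. The bijection of Proposition \ref{hermitian_spaces} a) between such spaces and ideal genera is in particular injective, so the equality of the two ideal genera forces $V \cong W$. Transporting $N$ through such an isometry, I may assume $L$ and $N$ are lattices on one and the same space $V$. Part b) then guarantees that $V$ carries at most one genus of unimodular lattices of each parity; since $L$ and $N$ are assumed to have the same parity, they must lie in the same genus, so $\operatorname{gen}(L) = \operatorname{gen}(N)$.

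The argument is essentially bookkeeping built on Proposition \ref{hermitian_spaces}, and I do not anticipate a genuinely hard step. The only point requiring care is the treatment of the ambient space: because $\operatorname{gen}(L)$ is defined only for lattices inside a fixed $V$, the forward direction must first observe that $L$ and $N$ share their ambient space, while the converse must first invoke the injectivity half of the bijection in a) to produce an isometry $V \cong W$ before part b) can be applied. Both of these are supplied directly by the Proposition, which is precisely why the corollary is an easy consequence of it.
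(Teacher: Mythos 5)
Your proof is correct and takes essentially the same approach as the paper: both directions reduce to Proposition~\ref{hermitian_spaces}, with part a) giving the forward implication (and, via injectivity of the correspondence between spaces and ideal genera, placing $L$ and $N$ on a common space in the converse) and part b) finishing by uniqueness of the genus of each parity. Your explicit transport of $N$ through an isometry $V \cong W$ merely spells out what the paper's proof leaves implicit.
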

\begin{proof}
If $L$ and $N$ are lattices in the same genus, they are lattices on the same hermitian space. By Prop. \ref{hermitian_spaces}a) the Steinitz classes of $L$ and $N$ are in the same genus. On the other hand if $L$ and $N$ are unimodular lattices of the same rank with $\llbracket \mathfrak{st}(L)\rrbracket=\llbracket\mathfrak{st}(N)\rrbracket$, by Prop. \ref{hermitian_spaces}a) $L$ and $N$ are lattices on the same hermitian space. Since the parity of $L$ and $N$ coincides, $L$ and $N$ are in the same genus by Prop. \ref{hermitian_spaces}b).  
\end{proof}

Thus a genus of  definite unimodular lattices over an imaginary-quadratic field is entirely determined by the parity, rank and ideal genus of the Steinitz class of any lattice in it. We denote the genus corresponding to the parity odd resp. even, the rank $n\in\N$ and the ideal genus $\llbracket \mathfrak{a} \rrbracket$ by $\mathbb{I}_{n,\llbracket\mathfrak{a}\rrbracket}$ resp. $\mathbb{II}_{n,\llbracket\mathfrak{a}\rrbracket}$.\vspace{0.2cm}

We will now show that the necessary conditions of Proposition \ref{hermitian_spaces} are also sufficient, i.e. we will give a construction for a sample lattice $L^{\text{odd}}_{n,[\mathfrak{a}]}$ resp. $L^{\text{even}}_{n,[\mathfrak{a}]}$ in each possible genus of (odd or even)  definite unimodular lattices.

\begin{theorem} \label{start:lattices}
Let $[\mathfrak{a}] \in \mathcal{C}_K$ and $n \in \N$.
\begin{enumerate}
 \item[a)] There is a  definite odd unimodular lattice of rank $n$ with Steinitz class $[\mathfrak{a}]$ over $K$.
 \item[b)] There is a  definite  even unimodular lattice of rank $n$ with Steinitz class $[\mathfrak{a}]$ over $K$ if and only if the following conditions are satisfied:
\begin{itemize}
 \item[i)] $d_K$ is even.
 \item[ii)] $n$ is even.
 \item[iii)] If $D \equiv_4 3$, then $(N(\mathfrak{a}),K/\mathbb{Q})_2=(-1)^{\frac{n}{2}}$   
\end{itemize}
\end{enumerate}
\end{theorem}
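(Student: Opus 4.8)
The plan is to prove both parts by exhibiting explicit sample lattices that realize the exact prescribed Steinitz class, not merely a class in its ideal genus.

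For part a) I would place the lattice on the space $V = \langle 1,\ldots,1,\tfrac{1}{N(\mathfrak a)}\rangle$ (one of the $2^{t-1}$ spaces listed after Proposition~\ref{hermitian_spaces}) and take
$$ L^{\text{odd}}_{n,[\mathfrak a]} := \mathfrak o x_1 \perp \cdots \perp \mathfrak o x_{n-1} \perp \mathfrak a x_n $$
with $x_1,\ldots,x_n$ realizing this diagonal Gram matrix. Then $\mathfrak{st}(L) = [\mathfrak o^{n-1}\mathfrak a] = [\mathfrak a]$ by construction, and the form is positive definite since all diagonal entries are positive. The only computations are the volume $\mathfrak v L = N(\mathfrak a)\mathfrak o\cdot\tfrac{1}{N(\mathfrak a)} = \mathfrak o$ and the scale $\mathfrak s L = \mathfrak o$ (the last component contributes $N(\mathfrak a)\cdot\tfrac1{N(\mathfrak a)}\mathfrak o=\mathfrak o$), so $L$ is unimodular; and $h(x_1,x_1)=1\in\mathfrak n L$ forces $\mathfrak n L=\mathfrak o$, so $L$ is odd. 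This settles a) with no obstruction.

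For part b), necessity of i)--iii) is exactly Proposition~\ref{hermitian_spaces} c). For sufficiency I would first build an explicit even unimodular plane. For an integral ideal $\mathfrak b$ and $L = \mathfrak o x_1 + \mathfrak b x_2$ with Gram matrix $\begin{pmatrix} 2 & b \\ \bar b & c\end{pmatrix}$, unimodularity forces $2c - N(b) = \tfrac1{N(\mathfrak b)}$, while evenness reduces -- using $\operatorname{Tr}(\mathfrak o)=2\Z$ from (\ref{trace:o}) -- to $b\in\overline{\mathfrak b}^{-1}$ and $cN(\mathfrak b)\in 2\Z$. Writing $c = c_0/N(\mathfrak b)$ and $(b) = \mathfrak c\,\overline{\mathfrak b}^{-1}$ for an integral $\mathfrak c\in[\mathfrak b]^{-1}$, these collapse to requiring that $[\mathfrak b]^{-1}$ contain an integral ideal $\mathfrak c$ of norm $N(\mathfrak c) = 2c_0 - 1$ with $c_0$ even, i.e.\ of norm $\equiv 3\pmod 4$; the scale is then $2\mathfrak o + \mathfrak c + c_0\mathfrak o = \mathfrak o$ precisely because $N(\mathfrak c)$ is odd. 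I expect the crux to be the solvability of this congruence, governed by the Hilbert symbol at $2$: for $D\equiv_4 3$ the formula $(u,D)_2 = (-1)^{(u-1)/2}$ shows an odd $N(\mathfrak c)$ is $\equiv 3\pmod 4$ iff $\chi_2([\mathfrak c]) = -1$, so a $\mathfrak b$-plane exists iff $\chi_2([\mathfrak b]) = -1$, whereas for $D\equiv_4 2$ the extra factor $(u,2)_2$ gives enough freedom to solve it for every class.

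Finally I would assemble the rank-$n$ lattice as an orthogonal sum of $m=n/2$ such planes, again even, unimodular and positive definite, with Steinitz class the product of the block classes. It then remains to factor $[\mathfrak a] = [\mathfrak b_1]\cdots[\mathfrak b_m]$ into realizable blocks. For $D\equiv_4 2$ this is immediate via $[\mathfrak a]\cdot[\mathfrak o]^{m-1}$; for $D\equiv_4 3$, Proposition~\ref{cor:ideal:-1at2} supplies $[\mathfrak c]$ with $\chi_2([\mathfrak c])=-1$, and taking $[\mathfrak b_1]=[\mathfrak a][\mathfrak c]^{1-m}$, $[\mathfrak b_2]=\cdots=[\mathfrak b_m]=[\mathfrak c]$ makes every $\chi_2([\mathfrak b_i])=-1$ exactly when $\chi_2([\mathfrak a])=(-1)^m$, i.e.\ condition iii). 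The main obstacle is thus the ramified prime $2$: carrying out the evenness/norm-residue bookkeeping that simultaneously constructs the plane and forces condition iii). The only genuinely separate case is $d_K\in\{-4,-8\}$ (excluded from Proposition~\ref{cor:ideal:-1at2}), where $h_K=1$ forces $[\mathfrak a]=[\mathfrak o]$ and the lattice must instead be built directly from a single explicit even unimodular block of rank $2$ or $4$.
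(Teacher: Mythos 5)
Your proposal is correct and matches the paper's proof in all essentials: part a) via the same diagonal lattice on $\langle 1,\ldots,1,\tfrac{1}{N(\mathfrak a)}\rangle$, and part b) via binary even unimodular blocks $\mathfrak o x_1+\mathfrak b x_2$ with Gram matrix $\left(\begin{smallmatrix} 2 & b \\ \overline b & c\end{smallmatrix}\right)$ whose existence is governed by the norm residue symbol at $2$, assembled orthogonally with Proposition \ref{cor:ideal:-1at2} supplying a class with $\chi_2=-1$ in the $D\equiv_4 3$ case, and with $d_K=-4$ handled by an explicit free block of rank $4$. The two steps you defer fill in exactly as you predict: for $D\equiv_4 2$ the paper's choice $\alpha=1+\sqrt D$, of norm $1-D\equiv_4 3$, realizes the required integral ideal of norm $\equiv 3\pmod 4$ in every class, and the $d_K=-4$ block is the paper's explicit $4\times 4$ Gram matrix --- note only that your listing of $d_K=-8$ as a genuinely separate case is unnecessary, since $D=-2\equiv_4 2$ already falls under your general $D\equiv_4 2$ construction.
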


\begin{proof}
a) The lattice $L^{\text{odd}}_{1,[\mathfrak{a}]}:=\mathfrak{a}x_1$ on the hermitian space $\langle \tfrac{1}{N(\mathfrak{a})} \rangle$ (in the basis $x_1$) is clearly an odd  definite unimodular lattice of rank $1$ with Steinitz class $[\mathfrak{a}]$. Therefore the lattice 
$$ L^{\text{odd}}_{n,[\mathfrak{a}]} := L^{\text{odd}}_{1,[\mathfrak{o}]} \perp\ldots \perp  L^{\text{odd}}_{1,[\mathfrak{o}]}\perp L^{\text{odd}}_{1,[\mathfrak{a}]}$$
is an odd  definite unimodular lattice of rank $n$ and Steinitz class $[\mathfrak{a}]$.\\[1ex]
b) The necessity of the conditions is an immediate consequence of Proposition \ref{hermitian_spaces}$c)$. Let $[\mathfrak{a}]$ be an ideal class as allowed by the above restrictions. We start with $n = 2$:\\
W.l.o.g. assume that $\mathfrak{a}$ is an integral ideal such that $2 \nmid N(\mathfrak{a})$. We claim that there exist $\lambda\in\N$ and $\alpha \in \mathfrak{o}$, such that $L := \mathfrak{o} x_1 + \mathfrak{a} x_2$ on the hermitian space given by the Gram matrix $\left( \begin{smallmatrix} 2 & \alpha \\ \overline{\alpha} & \frac{2 \lambda}{N(\mathfrak{a})}  \end{smallmatrix} \right)$  (in the basis $x_1,x_2$) is a binary even unimodular lattice with Steinitz class $[\mathfrak{a}]$: For $N(\mathfrak{a}) \equiv_4 -1$ we choose $\alpha := 1$ and $\lambda := \frac{1+N(\mathfrak{a})}{4}$ and for $N(\mathfrak{a}) \equiv_4 1$ we choose $\alpha := 1 + \sqrt{D}$ and $\lambda := \frac{1 + (1+D)N(\mathfrak{a})}{4}$. In the latter case conditions $i)$ and $iii)$ imply $D \equiv_4 2$, so that in either case $\lambda \in \N$. Now consider the volume and scale of $L$:
\begin{align*}
\mathfrak{v}L = \mathfrak{a}\overline{\mathfrak{a}}\cdot \det(x_1,x_2) = \mathfrak{a}\overline{\mathfrak{a}}\cdot \tfrac{1}{N(\mathfrak{a})} = \mathfrak{o} ,\quad
\mathfrak{s}L = 2\mathfrak{o} + \alpha\overline{\mathfrak{a}} + \overline{\alpha}\mathfrak{a} +  \tfrac{2\lambda}{N(\mathfrak{a})}\mathfrak{a}\overline{\mathfrak{a}} \subseteq \mathfrak{o}.
\end{align*}
Since $\mathfrak{o} = \mathfrak{v}L \subseteq (\mathfrak{s}L)^2 \subseteq \mathfrak{o}$, we conclude $\mathfrak{s}L = \mathfrak{o}$. Therefore $L$ is a unimodular lattice. Since
\begin{align*}
\mathfrak{n}L = 2\mathfrak{o} + 2\lambda\mathfrak{o} + \operatorname{Tr}^K_\Q(\mathfrak{o}) = 2\mathfrak{o},
\end{align*}
$L$ is even. Clearly the Steinitz class of $L$ is $[\mathfrak{a}]$ and $L$ is  definite. Set $L^{\text{even}}_{2,[\mathfrak{a}]}:=L$. \\[1ex]
For arbitrary even $n$ we split the proof depending on $D$ mod $4$.
For $D \equiv_4 2$ we construct $L^{\text{even}}_{2,[\mathfrak{a}]}$ as above. To obtain lattices of rank $n = 2m$ we construct 
$$ L^{\text{even}}_{n,[\mathfrak{a}]} := L^{\text{even}}_{2,[\mathfrak{o}]} \perp \ldots \perp L^{\text{even}}_{2,[\mathfrak{o}]} \perp L^{\text{even}}_{2,[\mathfrak{a}]}.$$
For $D \equiv_4 3$, we distinguish between the possible values of the norm residue symbol for $N(\mathfrak{a})$. If $(N(\mathfrak{a}),K/\mathbb{Q})_2 = -1$, then $iii)$ implies $n = 4m+2$. We construct 
$$ L^{\text{even}}_{n,[\mathfrak{a}]} := (L^{\text{even}}_{2,[\mathfrak{a}]} \perp L^{\text{even}}_{2,[\mathfrak{a}^{-1}]}) \perp \ldots \perp (L^{\text{even}}_{2,[\mathfrak{a}]} \perp L^{\text{even}}_{2,[\mathfrak{a}^{-1}]}) \perp  L^{\text{even}}_{2,[\mathfrak{a}]}.$$

If $(N(\mathfrak{a}),K/\mathbb{Q})_2 = 1$, then $iii)$ implies $n = 4m$. If $D \neq -1$, Corollary \ref{cor:ideal:-1at2} implies that we can choose another ideal class $[\mathfrak{b}]$ with $(N(\mathfrak{b}),K/\mathbb{Q})_2 = -1$ and construct 
$$ L^{\text{even}}_{n,[\mathfrak{a}]} := (L^{\text{even}}_{2,[\mathfrak{b}]} \perp L^{\text{even}}_{2,[\mathfrak{b}^{-1}]}) \perp \ldots \perp (L^{\text{even}}_{2,[\mathfrak{b}]} \perp L^{\text{even}}_{2,[\mathfrak{b}^{-1}]}) \perp  L^{\text{even}}_{2,[\mathfrak{b}]} \perp L^{\text{even}}_{2,[\mathfrak{b}^{-1}\mathfrak{a}]}.$$

  Finally if $D = -1$, let $L^{\text{even}}_{4,[\mathfrak{o}]} := \mathfrak{o}x_1+\mathfrak{o}x_2+\mathfrak{o}x_3+\mathfrak{o}x_4$ on the hermitian space given by the following Gram matrix (in the basis $x_1,\ldots,x_4$):
  \begin{align*}
  \begin{pmatrix} 2 & 1 & 0 & 0 \\ 1 & 2 & 1+i & 1+i \\ 0 & 1-i & 2 & 1 \\ 0 & 1-i & 1 & 2 \end{pmatrix}
  \end{align*}
It is easily verified that $L^{\text{even}}_{4,[\mathfrak{o}]}$ is an even  definite unimodular lattice. For arbitrary $n=4m$ we construct
$$ L^{\text{even}}_{n,[\mathfrak{o}]} := L^{\text{even}}_{4,[\mathfrak{o}]} \perp \ldots \perp L^{\text{even}}_{4,[\mathfrak{o}]}. $$
In every case the lattice $L^{\text{even}}_{n,[\mathfrak{a}]}$ is an even  definite unimodular lattice with Steinitz class $[\mathfrak{a}]$ as claimed.
\end{proof}

\begin{corollary}\label{free:quaternary:lattice}
Let $K$ be an imaginary-quadratic field such that $d_K$ is even. Then there is a free quaternary even  definite unimodular lattice over $K$.\qed
\end{corollary}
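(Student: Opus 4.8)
The plan is to read this off as a direct specialization of Theorem \ref{start:lattices}b). First I would unwind the terminology: a lattice as in (\ref{lattice}) is \emph{free} precisely when the ideal $\mathfrak{a} = \prod_i \mathfrak{a}_i$ can be taken principal, i.e. when its Steinitz class is trivial, $\mathfrak{st}(L) = [\mathfrak{o}]$; and \emph{quaternary} means rank $n = 4$. So the claim reduces to exhibiting an even definite unimodular lattice of rank $4$ whose Steinitz class is $[\mathfrak{o}]$.

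Next I would apply Theorem \ref{start:lattices}b) with the choices $n = 4$ and $[\mathfrak{a}] = [\mathfrak{o}]$, and verify that its three conditions hold. Condition i), that $d_K$ be even, is exactly the hypothesis of the corollary. Condition ii), that $n$ be even, holds since $n = 4$. Condition iii) only needs to be checked in the case $D \equiv_4 3$, where it demands $(N(\mathfrak{o}),K/\mathbb{Q})_2 = (-1)^{n/2} = (-1)^2 = 1$. Since $N(\mathfrak{o}) = 1$ and $1$ lies in $N^{K_2}_{\mathbb{Q}_2}(K_2^\ast)$, the left-hand side is also $1$, so condition iii) is automatically satisfied for the trivial class at rank $4$.

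Consequently Theorem \ref{start:lattices}b) furnishes a lattice $L^{\text{even}}_{4,[\mathfrak{o}]}$ that is even, definite, unimodular, of rank $4$ and of Steinitz class $[\mathfrak{o}]$, hence free, which is the assertion. I expect no genuine obstacle here: all of the real construction work has already been done inside the proof of Theorem \ref{start:lattices} (the explicit rank-$4$ Gram matrix in the $D = -1$ subcase and the orthogonal-sum constructions otherwise), and the only arithmetic input is the elementary observation that the even rank $n=4$ forces the sign $(-1)^{n/2}$ in condition iii) to be $+1$, matching the trivial norm residue symbol of the unit ideal.
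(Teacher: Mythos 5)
Your proposal is correct and matches the paper exactly: the corollary carries a \qed precisely because it is the specialization of Theorem \ref{start:lattices}b) to $n=4$ and $[\mathfrak{a}]=[\mathfrak{o}]$, with condition iii) trivially satisfied since $(N(\mathfrak{o}),K/\mathbb{Q})_2 = 1 = (-1)^{4/2}$. Your added remarks (trivial Steinitz class $\Leftrightarrow$ free, and the explicit constructions inside the theorem's proof doing the real work) are exactly the intended reading.
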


\section{Distribution of Steinitz classes} \label{Steinitz:Class:Number}
In this section we investigate the question whether $h_{[\mathfrak{a}]} = h_{[\mathfrak{b}]}$ for $[\mathfrak{a}],[\mathfrak{b}] \in \mathcal{C}_K$ holds. Since ideals and (odd) lattices of rank $1$ happen to be essentially the same objects, we have $h_{[\mathfrak{a}]}=1$ for all $\mathfrak{a}\in \mathcal{C}_K$ and therefore the problem is trivial in this case. For higher rank, we can show that in many cases such an equality arises from algebraic constructions. In particular in the binary case such a relation holds for all $[\mathfrak{a}],[\mathfrak{b}]$ in the same ideal genus as Corollary \ref{Steinitz:Class:Number:Equality:binary_2elementary} will show. Using results of \cite{hashimoto23}, we can deduce the same result in the case of ternary lattices. One might expect that this phenomenon is true in general, but the following counter-example for a genus of rank $4$ shows that this is not the case:

\begin{example} \label{counter:example:1} Let $K=\mathbb{Q}(\sqrt{-39})$. The class group of $K$ is the cyclic group of order 4 with generator $[\mathfrak{a}]$.  Because $e(\mathcal{G}) = 1$ (cf. Lemma \ref{genus:partition}) for both genera $\mathcal{G}$ of odd  definite unimodular lattices of rank $4$, the special genera coincide with the partial genera. In the following table, they are indexed by the Steinitz class of the lattices they contain:
\begin{center}
\renewcommand{\arraystretch}{1.2}
\begin{tabular}{|cr|cccc|cccc|}
 \hline

  & & $\mathcal{G}_{[\mathfrak{o}]}$ & $\mathcal{G}_{[\mathfrak{a}]}$ &
$\mathcal{G}_{[\mathfrak{a}^2]}$ & $\mathcal{G}_{[\mathfrak{a}^3]}$ &
$\mathcal{G}_{[\mathfrak{o}]}$ & $\mathcal{G}_{[\mathfrak{a}]}$ &
$\mathcal{G}_{[\mathfrak{a}^2]}$ & $\mathcal{G}_{[\mathfrak{a}^3]}$\\
  & & \multicolumn{4}{|c|}{partial} & \multicolumn{4}{|c|}{special}  \\
\hline
 $\mathbb{I}_{4,\llbracket \mathfrak{o} \rrbracket}$ & class number &
162 & - & 164 & - & 172 & - & 176 & -\\
  & mass & $\tfrac{935}{72}$ & - & $\tfrac{935}{72}$ & - &
$\tfrac{935}{36}$ & - & $\tfrac{935}{36}$ & -  \\
 \hline
 $\mathbb{I}_{4,\llbracket \mathfrak{a} \rrbracket}$ & class number &
- & 152 & - & 152 & - & 152 & - & 152 \\
   & mass & - & $\tfrac{154}{15}$ & - & $\tfrac{154}{15}$ & - &
$\tfrac{308}{15}$ & - & $\tfrac{308}{15}$ \\
 \hline
\end{tabular}
\end{center}
\end{example}

The above example is a minimal one in the sense that the discriminant $d_K=-39$ is the discriminant of smallest absolute value among all discriminants of imaginary-quadratic fields for which there is a genus of  definite unimodular lattices containing partial genera with non-equal partial class numbers. There are also examples of even unimodular lattices of rank $4$ where such a phenomenon occurs, like e.g. the genus $\mathbb{II}_{4,\llbracket\mathfrak{o}\rrbracket}$.\\
We first of all want to explain the equality of the partial class numbers within the genus $\mathbb{I}_{4,\llbracket \mathfrak{a} \rrbracket}$. We denote by $\overline{L}$ the lattice $\overline{\mathfrak{a}}_1x_1+\ldots+\overline{\mathfrak{a}}_nx_n$ on the hermitian space $(V,\overline{h})$, where $\overline{h}(x,y):=\overline{h(x,y)}$.

\begin{lemma}\label{Steinitz:Class:Number:Conjugate}
The map $L\mapsto \overline{L}$ maps isometry classes of  definite unimodular lattices to isometry classes of  definite unimodular lattices preserving the rank, the parity and the unitary group.\qed
\end{lemma}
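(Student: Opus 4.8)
The plan is to recognise $L\mapsto\overline{L}$ as base change along the nontrivial automorphism $\overline{\phantom{X}}$ of $\mathfrak{o}$ and to read off every asserted property from this functorial description. Concretely, I would view $\overline{L}$ as the hermitian $\mathfrak{o}$-lattice whose pseudo-basis datum is obtained from that of $L=\mathfrak{a}_1x_1+\ldots+\mathfrak{a}_nx_n$ by conjugating both the coefficient ideals and the Gram matrix, i.e. the ideals $\overline{\mathfrak{a}}_1,\ldots,\overline{\mathfrak{a}}_n$ together with the Gram matrix $\overline{G}=(\overline{h(x_i,x_j)})_{ij}$ of $\overline{h}$; this is exactly what the sum $\sum_i\overline{\mathfrak{a}}_ix_i$ on $(V,\overline{h})$ denotes. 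Before anything else I would check that $\overline{h}$ is again hermitian, since $\overline{\overline{h}(v,u)}=\overline{\overline{h(v,u)}}=h(v,u)=\overline{h(u,v)}=\overline{h}(u,v)$, and positive definite, because $\overline{h}(v,v)=\overline{h(v,v)}=h(v,v)>0$ for $v\neq 0$ (recall $h(v,v)\in\Q$). Thus $(V,\overline{h})$ is a definite hermitian space and $\overline{L}$ is a lattice on it.

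Next I would verify the three invariants using that $\overline{\phantom{X}}$ is a ring automorphism fixing $\Q$, hence fixing $\mathfrak{o}$, $2\mathfrak{o}$ and every rational number. The rank is that of the underlying module, so it is unchanged. The scale of $\overline{L}$ is the ideal generated by the values $\overline{h}(u,v)=\overline{h(u,v)}$, which is $\overline{\mathfrak{s}L}$, and likewise $\mathfrak{v}\overline{L}=\overline{\mathfrak{v}L}$ (from the pseudo-basis formula, using $\det\overline{G}=\overline{\det G}$ and $\prod_i\overline{\mathfrak{a}}_i\overline{\overline{\mathfrak{a}}_i}=\prod_i\mathfrak{a}_i\overline{\mathfrak{a}}_i$). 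Since $\mathfrak{s}L=\mathfrak{v}L=\mathfrak{o}$ is conjugation-invariant, $\overline{L}$ is again unimodular. Because $h(v,v)\in\Q$ is fixed by conjugation, the norm ideal is literally unchanged, $\mathfrak{n}\overline{L}=\mathfrak{n}L\in\{\mathfrak{o},2\mathfrak{o}\}$, so the parity is preserved.

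The crucial structural observation is that the construction turns isometries into isometries. If $\sigma\in U(V)$ satisfies $\sigma(L)=N$, then the same underlying map is $\mathfrak{o}$-linear for the conjugated scalar actions (a $K$-linear map automatically commutes with the conjugated multiplication, as $\sigma(\overline{\lambda}v)=\overline{\lambda}\sigma(v)$) and satisfies $\overline{h}(\sigma u,\sigma v)=\overline{h(\sigma u,\sigma v)}=\overline{h(u,v)}=\overline{h}(u,v)$, hence it is an isometry $\overline{L}\to\overline{N}$. This shows at once that $\overline{L}\cong\overline{N}$ whenever $L\cong N$, so the map descends to isometry classes, and taking $N=L$ it shows that $\sigma\mapsto\sigma$ is a bijective homomorphism $U(L)\to U(\overline{L})$, i.e. the unitary group is preserved.

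The one genuinely delicate point — and the reason the statement is not purely formal — is the well-definedness of $\overline{L}$ itself: as a literal subset of $V$ the sum $\sum_i\overline{\mathfrak{a}}_ix_i$ depends on the chosen pseudo-basis, so I must argue that its isometry class does not. This is precisely what the base-change viewpoint secures: a change of pseudo-basis of $L$ is encoded by a transition matrix $T\in\operatorname{GL}(V)$ subject to the compatible ideal conditions of the invariant-factor theorem (\S 81, \cite{omeara}), and applying $\overline{\phantom{X}}$ to this data yields the corresponding change of pseudo-basis for $\overline{L}$, so different presentations give isometric conjugate lattices. I expect this bookkeeping, together with the preliminary check that $\overline{h}$ is a positive-definite hermitian form, to be the only parts requiring genuine care; preservation of rank, parity and the unitary group then follows formally from the functoriality established above.
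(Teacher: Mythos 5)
Your proof is correct. The paper in fact offers no proof of this lemma at all --- it is stated with a \qed{} as ``straightforward to check'' --- so there is no argument of the authors to compare against; your write-up simply supplies the routine verification they omit, and it does so accurately: conjugation fixes $\Q$, hence fixes definiteness, the norm ideal (and so the parity), and the scale and volume ideals up to conjugation, which are $\mathfrak{o}$ for unimodular $L$, while any $\sigma\in U(V)$ with $\sigma(L)=N$ is tautologically an isometry of the conjugated data, giving $\overline{L}\cong\overline{N}$ and $U(L)\cong U(\overline{L})$. You also correctly isolate the one genuinely non-formal point, which the paper glosses over: as literally written, $\overline{h}$ is conjugate-linear in the \emph{first} variable for the original $K$-action on $V$, so the pair $(V,\overline{h})$ only becomes a hermitian space in the paper's convention after twisting the scalar action (equivalently, reading $\overline{L}$ as the pseudo-basis datum with conjugated coefficient ideals and conjugated Gram matrix); your base-change viewpoint is exactly what makes $\overline{L}$ well defined up to isometry, independent of the chosen pseudo-basis, after which everything else follows formally as you show.
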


\begin{proposition}\label{bijectionbar}
Let $\mathcal{G}$ be a genus of  definite unimodular lattices of rank $n$ over $K$ and $[\mathfrak{a}]\in\mathcal{C}_K$. Then the map $L\mapsto \overline{L}$ induces a bijection between $\mathcal{G}_{[\mathfrak{a}]}$ and $\mathcal{G}_{[\overline{\mathfrak{a}}]}$, i.e. $h_{[\mathfrak{a}]}=h_{[\overline{\mathfrak{a}}]}$. \qed
\end{proposition}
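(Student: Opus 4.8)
The plan is to realize $L\mapsto\overline{L}$ as an \emph{involution} on isometry classes of definite unimodular lattices that carries the genus $\mathcal{G}$ to itself and sends the Steinitz class $[\mathfrak{a}]$ to $[\overline{\mathfrak{a}}]$; restricting this involution then yields mutually inverse bijections between $\mathcal{G}_{[\mathfrak{a}]}$ and $\mathcal{G}_{[\overline{\mathfrak{a}}]}$.

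First I would record the two easy bookkeeping facts. Well-definedness on isometry classes, together with preservation of rank and parity, is already supplied by Lemma \ref{Steinitz:Class:Number:Conjugate}, so nothing new is needed there. For the Steinitz class, writing $L=\mathfrak{a}_1x_1+\ldots+\mathfrak{a}_nx_n$ with $\mathfrak{a}:=\prod_{i=1}^n\mathfrak{a}_i$, the definition $\overline{L}=\overline{\mathfrak{a}}_1x_1+\ldots+\overline{\mathfrak{a}}_nx_n$ gives immediately $\mathfrak{st}(\overline{L})=\big[\prod_{i=1}^n\overline{\mathfrak{a}}_i\big]=[\overline{\mathfrak{a}}]$. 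Finally, because $\overline{\overline{h}}=h$ and $\overline{\overline{\mathfrak{a}}_i}=\mathfrak{a}_i$, applying conjugation twice returns $L$, so the map is an involution.

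The one step with genuine content is that $\overline{L}$ again lies in $\mathcal{G}$. Since $\overline{L}$ has the same rank and parity as $L$, the characterization of genera by rank, parity and the ideal genus of the Steinitz class (the Corollary following Proposition \ref{hermitian_spaces}) reduces this to showing $\llbracket\mathfrak{a}\rrbracket=\llbracket\overline{\mathfrak{a}}\rrbracket$. Here I would invoke Proposition \ref{genus:equiv}: the ideal norm is conjugation-invariant, $N(\mathfrak{a})=\mathfrak{a}\overline{\mathfrak{a}}=N(\overline{\mathfrak{a}})$, so condition b) holds with $\lambda=1$ and hence $[\mathfrak{a}]$, $[\overline{\mathfrak{a}}]$ belong to the same ideal genus. (The ambient space of $\overline{L}$ is $(V,\overline{h})$; it is isometric to $(V,h)$ since it has the same dimension and the same discriminant — the Gram matrix of $\overline{h}$ is the entrywise conjugate of that of $h$, whose determinant, being the complex conjugate of a rational number, is unchanged — so $\overline{L}$ is a genuine member of $\mathcal{G}$ up to isometry.) Thus conjugation maps $\mathcal{G}_{[\mathfrak{a}]}$ into $\mathcal{G}_{[\overline{\mathfrak{a}}]}$, and by symmetry $\mathcal{G}_{[\overline{\mathfrak{a}}]}$ into $\mathcal{G}_{[\mathfrak{a}]}$.

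Combining these, the two induced maps are mutually inverse, hence bijections on isometry classes, which gives $h_{[\mathfrak{a}]}=h_{[\overline{\mathfrak{a}}]}$. I expect the only nontrivial point to be the genus-preservation step, and even that collapses once the conjugation-invariance of the ideal norm is noted; everything else is formal bookkeeping riding on Lemma \ref{Steinitz:Class:Number:Conjugate}.
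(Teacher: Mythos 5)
Your proposal is correct and follows essentially the route the paper intends: the proposition is stated without proof as an immediate consequence of Lemma~\ref{Steinitz:Class:Number:Conjugate}, and the only substantive point you add --- that $\overline{L}$ stays in the same genus, via the Corollary to Proposition~\ref{hermitian_spaces} and the conjugation-invariance of the ideal norm (equivalently $[\overline{\mathfrak{a}}]=[\mathfrak{a}]^{-1}\in[\mathfrak{a}]\,\mathcal{C}_K^2$, condition c) of Proposition~\ref{genus:equiv}) --- is exactly the check the authors leave to the reader, including the identification $(V,\overline{h})\cong(V,h)$. Your involution argument for bijectivity and the computation $\mathfrak{st}(\overline{L})=[\overline{\mathfrak{a}}]$ are the standard bookkeeping, so nothing needs to change (only note the harmless abuse in writing $N(\mathfrak{a})=\mathfrak{a}\overline{\mathfrak{a}}$, where properly $\mathfrak{a}\overline{\mathfrak{a}}=N(\mathfrak{a})\mathfrak{o}$).
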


Now, let $L=\mathfrak{a}_1x_1+\ldots+\mathfrak{a}_nx_n$ be a unimodular lattice on the hermitian space $(V,h)$. Denote by $^\mathfrak{a} L$ the lattice $\mathfrak{a}L$ on the hermitian space $(V,\tfrac{1}{N(\mathfrak{a})}h)$. The following lemma is well-known and straightforward to check (cf. \cite{hoffmann}):
\begin{lemma}\label{Lemma:bijection:Scaling} 
The map $L\mapsto {^\mathfrak{c}L}$ maps isometry classes of  definite unimodular lattices to isometry classes of  definite unimodular lattices preserving the rank, the parity and the unitary group.\qed
\end{lemma}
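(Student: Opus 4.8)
The plan is to verify directly, matching the ``straightforward to check'' claim, that ${}^{\mathfrak{c}}L$ is again a definite unimodular lattice of the same rank, parity and unitary group as $L$, and that the assignment is compatible with isometries. Throughout I write $h' := \tfrac{1}{N(\mathfrak{c})}h$ for the form on the target space and use that $N(\mathfrak{c}) \in \Q_{>0}$ with $\mathfrak{c}\overline{\mathfrak{c}} = N(\mathfrak{c})\mathfrak{o}$. Two points are immediate: since $N(\mathfrak{c}) > 0$, the form $h'$ is positive definite whenever $h$ is, so ${}^{\mathfrak{c}}L$ is definite; and since $K\cdot({}^{\mathfrak{c}}L) = K\cdot\mathfrak{c}L = V$, the rank is unchanged.

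For unimodularity I would compute the volume and scale of ${}^{\mathfrak{c}}L = \mathfrak{c}\mathfrak{a}_1 x_1 + \ldots + \mathfrak{c}\mathfrak{a}_n x_n$ relative to $h'$. The Gram matrix of $h'$ is $N(\mathfrak{c})^{-1}$ times that of $h$, so its determinant scales by $N(\mathfrak{c})^{-n}$, while $\prod_i (\mathfrak{c}\mathfrak{a}_i)(\overline{\mathfrak{c}\mathfrak{a}_i}) = (\mathfrak{c}\overline{\mathfrak{c}})^n\prod_i \mathfrak{a}_i\overline{\mathfrak{a}_i} = N(\mathfrak{c})^n\prod_i\mathfrak{a}_i\overline{\mathfrak{a}_i}$. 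The two powers of $N(\mathfrak{c})$ cancel, giving $\mathfrak{v}({}^{\mathfrak{c}}L) = \mathfrak{v}L = \mathfrak{o}$. For the scale, $\mathfrak{s}({}^{\mathfrak{c}}L)$ is generated by the $h'(u,v) = N(\mathfrak{c})^{-1}h(u,v)$ with $u,v \in \mathfrak{c}L$, and since $h(c\ell,c'\ell') = c\overline{c'}h(\ell,\ell')$ these span $N(\mathfrak{c})^{-1}\mathfrak{c}\overline{\mathfrak{c}}\,\mathfrak{s}L = \mathfrak{s}L = \mathfrak{o}$. Hence ${}^{\mathfrak{c}}L$ is unimodular.

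The only genuinely nontrivial point is the parity, for which I would establish the norm identity $\mathfrak{n}_h(\mathfrak{c}L) = N(\mathfrak{c})\,\mathfrak{n}L$ (where $\mathfrak{n}_h$ denotes the norm taken with respect to $h$); then $\mathfrak{n}({}^{\mathfrak{c}}L) = N(\mathfrak{c})^{-1}\mathfrak{n}_h(\mathfrak{c}L) = \mathfrak{n}L$, so ${}^{\mathfrak{c}}L$ is even if and only if $L$ is. Expanding $h(u,u)$ for $u = \sum_j c_j\ell_j \in \mathfrak{c}L$, the diagonal terms $N(c_j)h(\ell_j,\ell_j)$ lie in $N(\mathfrak{c})\mathfrak{n}L$ because $N(c_j)\in\mathfrak{c}\overline{\mathfrak{c}} = N(\mathfrak{c})\mathfrak{o}$, while each pair of off-diagonal terms combines to $\operatorname{Tr}^K_\Q(c_j\overline{c_k}h(\ell_j,\ell_k)) \in N(\mathfrak{c})\operatorname{Tr}^K_\Q(\mathfrak{s}L) \subseteq N(\mathfrak{c})\mathfrak{n}L$, using $c_j\overline{c_k}\in N(\mathfrak{c})\mathfrak{o}$, $h(\ell_j,\ell_k)\in\mathfrak{s}L$ and the inclusion $\operatorname{Tr}^K_\Q(\mathfrak{s}L)\mathfrak{o}\subseteq\mathfrak{n}L$ recalled in Section~1. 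This gives $\mathfrak{n}_h(\mathfrak{c}L)\subseteq N(\mathfrak{c})\mathfrak{n}L$, and applying the same estimate to $\mathfrak{c}^{-1}$ in place of $\mathfrak{c}$ and to the lattice $\mathfrak{c}L$ yields the reverse containment, hence equality. This cross-term bookkeeping is the main obstacle; everything else is formal.

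Finally, for the unitary group and well-definedness I would note that a $K$-linear map preserves $h$ if and only if it preserves $h' = N(\mathfrak{c})^{-1}h$, so $U(V,h) = U(V,h')$ as subgroups of $\operatorname{GL}(V)$. For $\sigma$ in this common group, $\sigma(\mathfrak{c}L) = \mathfrak{c}\,\sigma(L)$ by $K$-linearity, and $\mathfrak{c}\,\sigma(L) = \mathfrak{c}L \Leftrightarrow \sigma(L) = L$ after multiplying by $\mathfrak{c}^{-1}$; taking $\sigma \in U(L)$ gives $U({}^{\mathfrak{c}}L) = U(L)$. The same computation shows that if $\sigma(L) = N$, then $\sigma({}^{\mathfrak{c}}L) = {}^{\mathfrak{c}}N$ with $\sigma$ an isometry of $h'$, so isometric lattices have isometric images and the map descends to isometry classes; a two-sided inverse is provided by ${}^{\mathfrak{c}^{-1}}(-)$, so it is in fact a bijection.
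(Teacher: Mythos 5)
Your verification is correct and is exactly the direct check the paper has in mind: the paper offers no proof at all (it declares the lemma ``well-known and straightforward to check'' with a reference to Hoffmann), and your computations of volume, scale, norm and unitary group, including the neat reverse-containment trick via ${}^{\mathfrak{c}^{-1}}(-)$ for the parity identity $\mathfrak{n}_h(\mathfrak{c}L) = N(\mathfrak{c})\,\mathfrak{n}L$, supply precisely the omitted details. No gaps; all steps (in particular the use of $\mathfrak{c}\overline{\mathfrak{c}} = N(\mathfrak{c})\mathfrak{o}$ and of the inclusion $\operatorname{Tr}^K_\Q(\mathfrak{s}L)\mathfrak{o} \subseteq \mathfrak{n}L$ recalled in Section~1) are valid.
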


\begin{proposition}\label{bijection}
Let $\mathcal{G}$ and $\mathcal{G}'$ be two genera of  definite unimodular lattices over $K$ of rank $n$ and same parity. If there is a $[\mathfrak{c}]\in\mathcal{C}_K$ with $[\mathfrak{b}]=[\mathfrak{c}]^n\cdot [\mathfrak{a}]$, then the map $L\mapsto {^\mathfrak{c}L}$ induces a bijection between $\mathcal{G}_{[\mathfrak{a}]}$ and $\mathcal{G}'_{[\mathfrak{b}]}$, i.e. $h_{[\mathfrak{a}]}=h_{[\mathfrak{b}]}$.  
\end{proposition}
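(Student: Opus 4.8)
The plan is to reduce everything to the already-established properties of the scaling map $L \mapsto {}^{\mathfrak{c}}L$ from Lemma \ref{Lemma:bijection:Scaling}, together with a one-line computation of how scaling acts on the Steinitz class. Since $h_{[\mathfrak{a}]}$ and $h_{[\mathfrak{b}]}$ count isometry classes, I would run the whole argument at the level of isometry classes.

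First I would compute the Steinitz class of ${}^{\mathfrak{c}}L$. Writing $L = \mathfrak{a}_1 x_1 + \ldots + \mathfrak{a}_n x_n$, by definition ${}^{\mathfrak{c}}L = \mathfrak{c}L = \mathfrak{c}\mathfrak{a}_1 x_1 + \ldots + \mathfrak{c}\mathfrak{a}_n x_n$, whence
\[
\mathfrak{st}({}^{\mathfrak{c}}L) = \Big[ \prod_{i=1}^n \mathfrak{c}\mathfrak{a}_i \Big] = [\mathfrak{c}]^n \cdot \Big[ \prod_{i=1}^n \mathfrak{a}_i \Big] = [\mathfrak{c}]^n \cdot \mathfrak{st}(L).
\]
In particular, if $\mathfrak{st}(L) = [\mathfrak{a}]$ then $\mathfrak{st}({}^{\mathfrak{c}}L) = [\mathfrak{c}]^n[\mathfrak{a}] = [\mathfrak{b}]$ by hypothesis, and symmetrically scaling by $\mathfrak{c}^{-1}$ sends Steinitz class $[\mathfrak{b}]$ back to $[\mathfrak{c}]^{-n}[\mathfrak{b}] = [\mathfrak{a}]$.

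Next I would translate the two partial genera into intrinsic invariants. By the Corollary following Proposition \ref{hermitian_spaces}, two definite unimodular lattices of the same rank and parity lie in one genus exactly when their Steinitz classes lie in one ideal genus; consequently the isometry classes in $\mathcal{G}_{[\mathfrak{a}]}$ are precisely the isometry classes of definite unimodular lattices of rank $n$, of the parity of $\mathcal{G}$, and of Steinitz class $[\mathfrak{a}]$, and likewise those in $\mathcal{G}'_{[\mathfrak{b}]}$ for $[\mathfrak{b}]$. By Lemma \ref{Lemma:bijection:Scaling} the assignment $\operatorname{cls}(L) \mapsto \operatorname{cls}({}^{\mathfrak{c}}L)$ is a well-defined bijection on the set of isometry classes of definite unimodular lattices of rank $n$ and fixed parity (it is inverted by scaling with $\mathfrak{c}^{-1}$, and it preserves rank, parity, and $|U(L)|$). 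Combining this with the Steinitz-class computation, the map carries the subset with Steinitz class $[\mathfrak{a}]$ bijectively onto the subset with Steinitz class $[\mathfrak{b}]$, i.e. it restricts to a bijection $\mathcal{G}_{[\mathfrak{a}]} \to \mathcal{G}'_{[\mathfrak{b}]}$ and therefore $h_{[\mathfrak{a}]} = h_{[\mathfrak{b}]}$.

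I do not anticipate a hard step: all the structural content is already contained in Lemma \ref{Lemma:bijection:Scaling} and in the genus classification, and the only genuinely new input is the elementary Steinitz-class shift by $[\mathfrak{c}]^n$. The one point that must be handled with care is that scaling replaces the hermitian form $h$ by $\tfrac{1}{N(\mathfrak{c})}h$, so ${}^{\mathfrak{c}}L$ does not literally lie on the space underlying $\mathcal{G}'$; this is exactly why the argument is run through isometry classes (where Lemma \ref{Lemma:bijection:Scaling} already lives) rather than through lattices on a fixed hermitian space.
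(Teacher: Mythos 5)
Your proposal is correct and follows essentially the same route as the paper: the one-line computation $\mathfrak{st}({}^{\mathfrak{c}}L)=[\mathfrak{c}]^n\cdot\mathfrak{st}(L)$ combined with Lemma \ref{Lemma:bijection:Scaling}, which is exactly the paper's two-sentence proof. Your additional care in invoking the genus classification (the Corollary to Proposition \ref{hermitian_spaces}) to see that ${}^{\mathfrak{c}}L$ lands in $\mathcal{G}'_{[\mathfrak{b}]}$, and in running the argument at the level of isometry classes since scaling changes the ambient hermitian space, only makes explicit what the paper leaves implicit.
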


\begin{proof}
 One immediately sees that $\mathfrak{st}(^\mathfrak{c} L) = [\mathfrak{c}]^n \mathfrak{st}(L)$ where $n$ is the rank of $L$. The rest is covered by the above Lemma.
\end{proof}

\begin{theorem}\label{Steinitz:Class:Number:Equality}
 For  definite unimodular lattices of fixed rank $n$ and fixed parity we have:
 \begin{enumerate}
  \item[a)] If $\gcd(n,h_K) = 1$, then $h_{[\mathfrak{a}]}=h_{[\mathfrak{b}]}$ for all $[\mathfrak{a}],[\mathfrak{b}] \in \mathcal{C}_K$.
  \item[b)] If $\gcd(n,\operatorname{exp}(\mathcal{C}_K)) \leq 2$, then $h_{[\mathfrak{a}]}=h_{[\mathfrak{b}]}$ for all $[\mathfrak{b}]\in \llbracket\mathfrak{a}\rrbracket$.
 \end{enumerate}
\end{theorem}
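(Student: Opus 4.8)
The plan is to reduce both parts to Proposition \ref{bijection}. That proposition says that whenever two Steinitz classes are related by $[\mathfrak{b}] = [\mathfrak{c}]^n [\mathfrak{a}]$ for some $[\mathfrak{c}] \in \mathcal{C}_K$, the scaling map $L \mapsto {}^\mathfrak{c}L$ furnishes a bijection between the corresponding partial genera (of rank $n$ and the common parity), whence $h_{[\mathfrak{a}]} = h_{[\mathfrak{b}]}$. So the whole problem becomes purely class-group-theoretic: \emph{given two occurring Steinitz classes, exhibit a class $[\mathfrak{c}]$ whose $n$-th power realises their quotient $[\mathfrak{b}][\mathfrak{a}]^{-1}$}, that is, decide when $[\mathfrak{b}][\mathfrak{a}]^{-1}$ lies in the image $\mathcal{C}_K^n$ of the $n$-th power endomorphism of $\mathcal{C}_K$. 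Since $L\mapsto {}^\mathfrak{c}L$ preserves rank and parity (Lemma \ref{Lemma:bijection:Scaling}), occurring classes are carried to occurring classes, so no separate existence check is needed.

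For part a) I would argue that $\gcd(n,h_K)=1$ makes $x \mapsto x^n$ an automorphism of the finite abelian group $\mathcal{C}_K$ (its order $h_K$ is coprime to $n$), in particular a surjection. Thus for any $[\mathfrak{a}],[\mathfrak{b}] \in \mathcal{C}_K$ there is $[\mathfrak{c}]$ with $[\mathfrak{c}]^n = [\mathfrak{b}][\mathfrak{a}]^{-1}$; concretely $[\mathfrak{c}] = ([\mathfrak{b}][\mathfrak{a}]^{-1})^m$ for any $m$ with $mn \equiv 1 \pmod{h_K}$. Proposition \ref{bijection} then yields $h_{[\mathfrak{a}]}=h_{[\mathfrak{b}]}$ for every pair of classes, across the various ideal genera.

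For part b) I would first recall (Proposition \ref{genus:equiv}c) that $[\mathfrak{b}] \in \llbracket\mathfrak{a}\rrbracket$ is equivalent to $[\mathfrak{b}][\mathfrak{a}]^{-1} \in \mathcal{C}_K^2$. Hence it suffices to show that the hypothesis $\gcd(n,\operatorname{exp}(\mathcal{C}_K)) \le 2$ forces $\mathcal{C}_K^2 \subseteq \mathcal{C}_K^n$: then every such quotient is an $n$-th power and Proposition \ref{bijection} applies as before (now within one genus, since we stay inside a single ideal genus). The elementary fact I would isolate is that for any finite abelian group $A$ one has $A^n = A^{\gcd(n,\operatorname{exp}(A))}$; this reduces to the cyclic case $A=\Z/e\Z$, where multiplication by $n$ has image $\gcd(n,e)\,\Z/e\Z$, and then passes to a primary (or invariant-factor) decomposition. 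Granting this, $\gcd(n,\operatorname{exp}(\mathcal{C}_K))=1$ gives $\mathcal{C}_K^n=\mathcal{C}_K\supseteq\mathcal{C}_K^2$, while $\gcd(n,\operatorname{exp}(\mathcal{C}_K))=2$ gives $\mathcal{C}_K^n=\mathcal{C}_K^2$; either way $\mathcal{C}_K^2\subseteq\mathcal{C}_K^n$, as required.

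The computations are light, so the only real obstacle is the bookkeeping of the group-theoretic lemma $A^n=A^{\gcd(n,\operatorname{exp}A)}$ and checking it delivers exactly the inclusion $\mathcal{C}_K^2\subseteq\mathcal{C}_K^n$ under the stated bounds; everything else is a direct appeal to Proposition \ref{bijection}. To tie the two parts together I would also remark that $\operatorname{exp}(\mathcal{C}_K)$ and $h_K$ have the same prime divisors, so the case $\gcd(n,\operatorname{exp}(\mathcal{C}_K))=1$ of b) is literally the hypothesis $\gcd(n,h_K)=1$ of a); the genuinely new content of b) is therefore the case $\gcd(n,\operatorname{exp}(\mathcal{C}_K))=2$, where one trades surjectivity of the $n$-th power map for surjectivity onto $\mathcal{C}_K^2$.
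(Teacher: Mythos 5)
Your proposal is correct and follows essentially the same route as the paper: both parts reduce via Proposition \ref{bijection} to the purely group-theoretic statements $\mathcal{C}_K^n=\mathcal{C}_K$ (part a)) and $\mathcal{C}_K^2\subseteq\mathcal{C}_K^n$ (part b), using Proposition \ref{genus:equiv} to translate ``same ideal genus'' into membership in $\mathcal{C}_K^2$), exactly as the paper does. Your only departure is cosmetic: where the paper verifies $C^2\subseteq C^n$ factor-by-factor in a cyclic $p$-group decomposition via a case analysis on $p$ and the parity of $n$, you encapsulate the same computation in the (correct) identity $A^n=A^{\gcd(n,\exp(A))}$ for finite abelian $A$ — a tidy repackaging rather than a genuinely different argument.
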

\begin{proof}
 $a)$ Using Lemma \ref{bijection}, we have to show that for all $[\mathfrak{a}],[\mathfrak{b}] \in \mathcal{C}_K$ there exists $[\mathfrak{c}] \in \mathcal{C}_K$, such that $[\mathfrak{b}]=[\mathfrak{c}]^n\cdot [\mathfrak{a}]$, which is equivalent to $\mathcal{C}^n_K = \mathcal{C}_K$. Now the assumption $\gcd(n,h_K) = 1$ implies this equality, as taking $n$-th powers becomes an automorphism of the finite abelian group $\mathcal{C}_K$. 

 $b)$ We have to show that for all $[\mathfrak{b}]\in \llbracket\mathfrak{a}\rrbracket$ there is a $[\mathfrak{c}]\in\mathcal{C}_K$ with $[\mathfrak{b}]=[\mathfrak{c}]^n\cdot [\mathfrak{a}]$. Since $[\mathfrak{a}]$ and $[\mathfrak{b}]$ are in the same ideal genus, this is equivalent to $\mathcal{C}_K^2 \subseteq \mathcal{C}_K^n$, which is essentially a group theoretic question. \\ 
 We use a decomposition of $\mathcal{C}_K$ into cyclic $p$-groups. Since $\exp(\mathcal{C}_K)$ is precisely the product of the largest orders of those cyclic factors, it follows from the assumption $\gcd(n,\operatorname{exp}(\mathcal{C}_K)) \leq 2$ that $\gcd(n,|C|) \le 2$ for each such $C$.
 For $p$ odd or $p = 2$ and $n$ odd we have $C^n = C$ by the same argument as in a) and therefore $C^2 \subseteq C = C^n$ in these cases. Now assume that $p = 2$ and $n$ is even. Then $|C| \leq 2$ or $\frac{n}{2}$ is odd. But then $C^2 = \{1\} \subseteq C^n$ is trivially in the first case and in the second case we have  $\gcd(\frac{n}{2},|C|) = 1$, which then implies $C^{\frac{n}{2}} = C$, thus $C^{n} = C^2$. \\
 Now since $C^2 \subseteq C^n$ for every cyclic factor, we conclude $\mathcal{C}_K^2 \subseteq \mathcal{C}_K^n$ as proposed.
 \end{proof}

We now consider the two trivial cases in which the condition of part $b)$ of the above Theorem is satisfied.
\begin{corollary}\label{Steinitz:Class:Number:Equality:binary_2elementary}
The isometry classes of  definite unimodular lattices of rank $n$ in the same genus, with given Steinitz classes $[\mathfrak{a}],[\mathfrak{b}]$, are in bijection if 
\begin{enumerate}
  \item[i)] $\mathcal{C}_K$ is $2$-elementary, or
  \item[ii)] $n = 2$.
\end{enumerate} 
\end{corollary}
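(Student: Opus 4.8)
The plan is to derive both cases directly from Theorem \ref{Steinitz:Class:Number:Equality}(b), whose hypothesis is $\gcd(n,\operatorname{exp}(\mathcal{C}_K)) \le 2$. In each of the two situations one simply checks that this gcd condition holds, and then the bijection between $\mathcal{G}_{[\mathfrak{a}]}$ and $\mathcal{G}_{[\mathfrak{b}]}$ for $[\mathfrak{b}] \in \llbracket\mathfrak{a}\rrbracket$ (equivalently, the equality $h_{[\mathfrak{a}]} = h_{[\mathfrak{b}]}$) is immediate. Note that for two unimodular lattices of the same rank and parity, $\llbracket\mathfrak{st}(L)\rrbracket = \llbracket\mathfrak{st}(N)\rrbracket$ is exactly the condition for them to lie in the same genus (by the Corollary following Proposition \ref{hermitian_spaces}), so the hypothesis ``in the same genus'' is precisely the hypothesis ``$[\mathfrak{b}] \in \llbracket\mathfrak{a}\rrbracket$'' needed to invoke part (b).

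For case (i), I would argue as follows. If $\mathcal{C}_K$ is $2$-elementary, then every non-trivial element has order $2$, so $\operatorname{exp}(\mathcal{C}_K) \in \{1,2\}$. Hence $\gcd(n,\operatorname{exp}(\mathcal{C}_K))$ divides $\operatorname{exp}(\mathcal{C}_K) \le 2$, so the hypothesis of Theorem \ref{Steinitz:Class:Number:Equality}(b) is satisfied regardless of $n$, and the conclusion follows.

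For case (ii), I would take $n = 2$ and observe that $\gcd(2,\operatorname{exp}(\mathcal{C}_K)) \le 2$ trivially, since the gcd of $2$ with any integer is at most $2$. Again the hypothesis of Theorem \ref{Steinitz:Class:Number:Equality}(b) holds, and the asserted bijection is obtained.

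There is no genuine obstacle here: the corollary is a routine specialization of the theorem, and the only point requiring any care is making explicit that ``same genus'' translates into ``same ideal genus of the Steinitz classes'' so that part (b)—which is stated for $[\mathfrak{b}] \in \llbracket\mathfrak{a}\rrbracket$ rather than for arbitrary $[\mathfrak{b}]$—actually applies. Once that identification is recorded, both cases are one-line gcd checks.
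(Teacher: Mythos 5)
Your proposal is correct and takes essentially the same route as the paper, which offers no separate proof but introduces the corollary as precisely ``the two trivial cases in which the condition of part b) of the above Theorem is satisfied''---i.e., the same one-line checks that $\gcd(n,\operatorname{exp}(\mathcal{C}_K))\le 2$ holds when $\mathcal{C}_K$ is $2$-elementary or when $n=2$. Your explicit remark that ``same genus'' translates into $[\mathfrak{b}]\in\llbracket\mathfrak{a}\rrbracket$ via the corollary to Proposition \ref{hermitian_spaces} correctly fills in a step the paper leaves tacit.
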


In Proposition $2.8$ of \cite{hashimoto} it is stated that $|\mathcal{G}_{\mathfrak{b}} / U(V)^u|$ is independent of the choice of $\mathfrak{b}$ if $\gcd(n,h_K/2^{t-1}) = 1$, which our result therefore refines, considering that $h_{[a]} = |\mathcal{G}_{[a]}/U(V)| = |\mathcal{G}_{\mathfrak{b}} / U(V)^u|$ for suitable $\mathfrak{b}$ as in Lemma \ref{partial:special}.\\
Examples not covered by Theorem \ref{Steinitz:Class:Number:Equality} respectively Corollary \ref{Steinitz:Class:Number:Equality:binary_2elementary} are the following:
\begin{example}\label{Example:rank3}
a) Let $K=\mathbb{Q}(\sqrt{-23})$. The class group of $K$
is the cyclic group of order 3 with generator $[\mathfrak{a}]$. The genus
$\mathbb{I}_{3,\llbracket\mathfrak{o}\rrbracket}$ is partitioned into 3 times 10
isometry classes with respectively equal Steinitz classes. However, if we count the number of classes in the three partial genera with same decomposition type, we obtain the following table:
\begin{center}
\begin{tabular}{|l|ccc|}
\hline
 & 1+1+1 & 2+1 & 3 \\
\hline
$\mathcal{G}_{[\mathfrak{o}]}$ & 4 & 3 & 3\\
$\mathcal{G}_{[\mathfrak{a}]}$ & 3 & 3 & 4\\
$\mathcal{G}_{[\mathfrak{a}]^2}$ & 3 & 3 & 4\\
\hline
\end{tabular}
\end{center}
b) Let $K=\mathbb{Q}(\sqrt{-83})$, so
$\mathfrak{o}=\mathbb{Z}[\tfrac{1+\sqrt{-83}}{2}]$. The class group of
$K$ is the cyclic group of order 3 with generator $[\mathfrak{a}]$. The
genus $\mathbb{I}_{3,\llbracket\mathfrak{o}\rrbracket}$ is partitioned into 3 times
43 isometry classes with respectively equal Steinitz classes, however
the sequence of orders of the associated unitary groups do only match
for two of these Steinitz classes:
\begin{center}
\begin{tabular}{|l|lllllll|}
\hline
 & 2 & 4 & 8 & 12 & 16 & 24 & 48 \\
\hline
$\mathcal{G}_{[\mathfrak{o}]}$ & 6 & 21 & 10 & 0 & 0 & 3 & 3\\
$\mathcal{G}_{[\mathfrak{a}]}$ & 7 & 18 & 9  & 3 & 3 & 3 & 0\\
$\mathcal{G}_{[\mathfrak{a}]^2}$ & 7 & 18 & 9  & 3 & 3 & 3 & 0\\
\hline
\end{tabular}
\end{center}
\end{example}

In contrast to the bijections $L\mapsto {^{\mathfrak{a}}L}$ resp. $L\mapsto \overline{L}$ given above, these examples illustrate that there are cases in which there cannot exist any bijection preserving the algebraic structure, i.e. for example the decomposition type or the unitary groups. However, for ternary lattices we can explain the existence of some bijection.
\begin{proposition} \label{odd:Steinitz}
 For  definite unimodular lattices of odd rank we have that either $h_{[\mathfrak{a}]} = h_{[\mathfrak{b}]}$ for all $[\mathfrak{a}],[\mathfrak{b}] \in \mathcal{C}_K$ or there is a violation to this inside of each ideal genus.
\end{proposition}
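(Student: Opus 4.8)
The plan is to treat the partial class number for a fixed odd rank $n$ as a single function $h\colon \mathcal{C}_K \to \N$, $[\mathfrak{a}] \mapsto h_{[\mathfrak{a}]}$, and to feed the scaling bijection of Proposition \ref{bijection} into an elementary observation about the subgroup $\mathcal{C}_K^n$. First I would record that $h$ is genuinely defined on all of $\mathcal{C}_K$: since $n$ is odd, Theorem \ref{start:lattices}b)ii) forces the parity to be odd, and Theorem \ref{start:lattices}a) shows every class $[\mathfrak{a}]\in\mathcal{C}_K$ occurs as the Steinitz class of some odd unimodular lattice of rank $n$; by the Corollary following Proposition \ref{hermitian_spaces} the partial genus $\mathcal{G}_{[\mathfrak{a}]}$ then lies in the unique genus $\mathbb{I}_{n,\llbracket\mathfrak{a}\rrbracket}$, so $h_{[\mathfrak{a}]}$ is unambiguous. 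Applying Proposition \ref{bijection} to the map $L \mapsto {}^{\mathfrak{c}}L$ yields $h_{[\mathfrak{a}]} = h_{[\mathfrak{c}]^{n}[\mathfrak{a}]}$ for every $[\mathfrak{c}]\in\mathcal{C}_K$, which says exactly that $h$ is constant on the cosets of $\mathcal{C}_K^{n}$.

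The crux is then purely group-theoretic. Because $n$ is odd we have $\gcd(n,2)=1$, and for a finite abelian group the product of power-subgroups satisfies $\mathcal{C}_K^{n}\cdot\mathcal{C}_K^{2} = \mathcal{C}_K^{\gcd(n,2)} = \mathcal{C}_K$ (writing $1 = un+2v$ gives $g = (g^{u})^{n}(g^{v})^{2}$ for every $g$). I would then observe that this makes every coset of $\mathcal{C}_K^{2}$ — that is, by the description of ideal genera as the cosets of $\mathcal{C}_K^{2}$, every ideal genus — meet every coset of $\mathcal{C}_K^{n}$: for cosets $g\,\mathcal{C}_K^{2}$ and $g'\mathcal{C}_K^{n}$ one has $g^{-1}g' \in \mathcal{C}_K^{2}\mathcal{C}_K^{n} = \mathcal{C}_K$, so they always intersect. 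Since $h$ is constant on $\mathcal{C}_K^{n}$-cosets, it follows that the set of values attained by $h$ on any single ideal genus already coincides with the set of values attained by $h$ on all of $\mathcal{C}_K$.

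From here the dichotomy is immediate. If this common value set is a singleton then $h_{[\mathfrak{a}]}=h_{[\mathfrak{b}]}$ for all $[\mathfrak{a}],[\mathfrak{b}]\in\mathcal{C}_K$, which is the first alternative. Otherwise the value set has at least two elements, and as each ideal genus realises the full value set, each ideal genus must contain two classes with distinct partial class numbers — a violation inside \emph{every} ideal genus, the second alternative. The two cases are mutually exclusive and exhaustive, which is precisely the assertion.

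I expect the only delicate point to be the bookkeeping in the first paragraph: one must check that Proposition \ref{bijection} may legitimately be invoked even though $L$ and ${}^{\mathfrak{c}}L$ may lie in different genera (only the resulting equality of partial class numbers is used) and that $h$ is globally defined. The combinatorial heart — that an ideal genus, being a coset of $\mathcal{C}_K^{2}$, surjects onto $\mathcal{C}_K/\mathcal{C}_K^{n}$ exactly because $n$ is odd — is short once isolated, and it is what makes the odd-rank hypothesis indispensable; for even $n$ the identity $\mathcal{C}_K^{n}\mathcal{C}_K^{2}=\mathcal{C}_K$ can fail, and indeed Example \ref{counter:example:1} exhibits an even-rank genus where a violation occurs in one ideal genus but not another.
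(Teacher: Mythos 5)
Your proof is correct and in substance the same as the paper's: the paper likewise feeds Proposition \ref{bijection} into the observation that for odd $n$ one has $[\mathfrak{c}]^n\equiv[\mathfrak{c}]\pmod{\mathcal{C}_K^2}$, so that multiplication by a suitable $n$-th power carries any ideal genus onto any other while preserving partial class numbers, and the dichotomy follows. Your coset packaging ($h$ constant on $\mathcal{C}_K^n$-cosets together with $\mathcal{C}_K^n\mathcal{C}_K^2=\mathcal{C}_K$, so every ideal genus meets every $\mathcal{C}_K^n$-coset) is merely a reformulation of the paper's explicit value-preserving bijection between ideal genera.
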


\begin{proof}
 Let $\llbracket\mathfrak{a}\rrbracket,\llbracket\mathfrak{b}\rrbracket$ be distinct ideal genera. Then there exists $[\mathfrak{c}]$ such that  $[\mathfrak{a}] = [\mathfrak{c}][\mathfrak{b}]$. In that case $\llbracket\mathfrak{c}^n\mathfrak{a}\rrbracket = \llbracket\mathfrak{c}\mathfrak{a}\rrbracket = \llbracket\mathfrak{b}\rrbracket$, since $n$ is odd. Therefore multiplication by $[\mathfrak{c}]^n$ induces a bijection between $\llbracket\mathfrak{a}\rrbracket$ and $\llbracket\mathfrak{b}\rrbracket$. By Lemma \ref{bijection} we therefore have a bijection of unimodular lattices with Steinitz class $[\mathfrak{a}'] \in \llbracket\mathfrak{a}\rrbracket$, and Steinitz class $[\mathfrak{b}'] \in \llbracket\mathfrak{b}\rrbracket$, so that $h_{[\mathfrak{a}']} = h_{[\mathfrak{b}']}$. So if $h_{[\mathfrak{a}']} = h_{[\mathfrak{a}]}$ holds for all $[\mathfrak{a}'] \in \llbracket\mathfrak{a}\rrbracket$, it holds for all classes in $\mathcal{C}_K$. On the converse if it is not satisfied for one ideal genus, it cannot 
be for any other.
\end{proof}

\begin{corollary}\label{Steinitz:Class:Number:Equality:ternary}
 For  definite unimodular lattices of rank $3$ over $K$ we have $h_{[\mathfrak{a}]} = h_{[\mathfrak{b}]}$ for all $[\mathfrak{a}],[\mathfrak{b}] \in \mathcal{C}_K$.
\end{corollary}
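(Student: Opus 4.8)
The plan is to combine the dichotomy of Proposition~\ref{odd:Steinitz} with the coincidence of special and partial class numbers in odd rank, feeding in an external equality of special class numbers for ternary lattices as the decisive input. First I would dispose of the two exceptional discriminants $d_K\in\{-3,-4\}$ separately: in both of these cases $h_K=1$, so $\mathcal{C}_K$ is trivial, there is a single occurring Steinitz class $[\mathfrak{o}]$, and the asserted equality holds vacuously. Thus I may assume $d_K\notin\{-3,-4\}$ for the remainder.

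Since the rank $n=3$ is odd, Proposition~\ref{odd:Steinitz} gives a clean alternative: either $h_{[\mathfrak{a}]}=h_{[\mathfrak{b}]}$ for all $[\mathfrak{a}],[\mathfrak{b}]\in\mathcal{C}_K$, or the equality is violated inside \emph{every} ideal genus. Consequently it suffices to establish that the partial class numbers are constant within one ideal genus; ruling out a violation there forces us into the first branch and hence yields equality across all of $\mathcal{C}_K$. Note that by Theorem~\ref{start:lattices}a) every class of a given ideal genus really does occur as the Steinitz class of an odd unimodular lattice of rank $3$, so the occurring partial genera of a fixed genus $\mathbb{I}_{3,\llbracket\mathfrak{a}\rrbracket}$ are indexed exactly by the classes of $\llbracket\mathfrak{a}\rrbracket$.

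To control these partial class numbers I would pass to special class numbers. Because the rank is odd and $d_K\notin\{-3,-4\}$, Corollary~\ref{class:number:coincide} identifies each occurring partial class number $h_{[\mathfrak{a}]}$ with the special class number $h^+(\operatorname{gen}^+(L))$ of a representative lattice $L$ of Steinitz class $[\mathfrak{a}]$. Concretely, $e(\mathcal{G})=1$ for odd rank by Lemma~\ref{genus:partition}, so each set $\mathcal{G}_{\mathfrak{b}}$ is a single special genus, and Lemma~\ref{partial:special} matches it with the corresponding partial genus. Hence the occurring partial genera inside a fixed genus $\mathcal{G}$ correspond bijectively to the special genera inside $\mathcal{G}$, and the problem reduces to the question: do all special genera of ternary definite unimodular lattices lying in a common genus have the same special class number?

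The final, and genuinely decisive, step is to invoke the analysis of ternary definite unimodular hermitian lattices by Hashimoto and Koseki~\cite{hashimoto23}, from which the special class number depends only on the ambient genus and not on the particular special genus. Combining this with the identification above gives equality of the partial class numbers within each ideal genus, and Proposition~\ref{odd:Steinitz} then upgrades this to $h_{[\mathfrak{a}]}=h_{[\mathfrak{b}]}$ for all $[\mathfrak{a}],[\mathfrak{b}]\in\mathcal{C}_K$. I expect the main obstacle to be exactly this appeal to \cite{hashimoto23}: everything internal to the paper is bookkeeping about the $\mathcal{G}_{\mathfrak{b}}$-decomposition and the translation between $U(V)^u$-classes and special classes, whereas the genuine equality of the special class numbers in rank $3$ is the external ingredient that cannot be produced by the algebraic bijections $L\mapsto{}^{\mathfrak{c}}L$ and $L\mapsto\overline{L}$ alone — as Example~\ref{Example:rank3} warns, no structure-preserving bijection need exist.
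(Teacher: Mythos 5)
Your proposal is correct and follows essentially the same route as the paper: the paper likewise cites \cite{hashimoto23} (Theorem 5.2) for the equality of the special class numbers inside $\mathbb{I}_{3,\llbracket\mathfrak{o}\rrbracket}$, identifies them with partial class numbers via Corollary \ref{class:number:coincide}, and then upgrades equality within one ideal genus to all of $\mathcal{C}_K$ by the dichotomy of Proposition \ref{odd:Steinitz}. Your explicit disposal of the exceptional discriminants $d_K\in\{-3,-4\}$ (where $h_K=1$ and the claim is vacuous) is a small point of care that the paper's proof leaves implicit, since Corollary \ref{class:number:coincide} excludes those cases.
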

\begin{proof}
From \cite{hashimoto23} Theorem $5.2$ we know that the special class numbers of all special classes contained in the genus $\mathbb{I}_{3,\llbracket\mathfrak{o}\rrbracket}$ are equal. By Corollary \ref{class:number:coincide} we know that special and partial class numbers coincide in this case. Thus we have $h_{[\mathfrak{a}]} = h_{[\mathfrak{b}]}$ for all $[\mathfrak{a}],[\mathfrak{b}] \in \llbracket \mathfrak{o}\rrbracket$. Using Proposition \ref{odd:Steinitz} we get the claimed result.
\end{proof}

We have seen that for lattices of rank at most $3$ we always have an equality of partial class numbers in a fixed genus (cf. Corollaries \ref{Steinitz:Class:Number:Equality:binary_2elementary} and \ref{Steinitz:Class:Number:Equality:ternary}). The underlying bijection quite often comes from some kind of algebraic construction (cf. Lemmata \ref{Steinitz:Class:Number:Conjugate} and \ref{Lemma:bijection:Scaling}), but for rank $3$ not every such bijection is of that kind. In fact there are examples where there cannot exist any bijection preserving the order of the unitary groups or the decomposition types (cf. Example \ref{Example:rank3}). We have also seen that for rank $4$ there is a counter example to the existence of any bijection at all (cf. Example \ref{counter:example:1}).\\ 
Due to the increasing complexity of the necessary computations to enumerate genera of rank $\geq 5$, we have not found any more counter examples by the computational approach. Indeed, in every considered case the partial class numbers coincide. For example the genus $\mathbb{I}_{6,\llbracket \mathfrak{o} \rrbracket}$ over $\mathbb{Q}(\sqrt{-23})$ is a case not covered by the above results, where the partial class numbers of the three partial genera is $2965$, but the sequences of orders of the unitary groups of the partial genera match only for two of the occurring Steinitz classes. Whether this phenomenon occurs only by chance or only for lattices of small rank, or if there is a more profound reason to be unveiled, remains an open question to us.

\section{A mass formula for partial genera} \label{partial:masses}

\begin{theorem} \label{partial:mass} Let $\mathcal{G}$ be a genus of  definite unimodular lattices over $K$ with $\mathcal{G}_{[\mathfrak{a}]}\subseteq\mathcal{G}$. Then the following holds:
\begin{align*}\operatorname{mass}(\mathcal{G})=\frac{h_K}{2^{t-1}}\cdot \operatorname{mass}(\mathcal{G}_{[\mathfrak{a}]}) \end{align*}
In particular, the partial masses are equal for all classes in the ideal genus $\llbracket \mathfrak{a}\rrbracket$.
\end{theorem}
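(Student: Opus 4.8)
The plan is to relate the partial mass of a partial genus to the special masses of the special genera it contains, and then invoke the known mass formula for special genera from Rehmann. Concretely, I would fix a lattice $L \in \mathcal{G}$ with $\mathfrak{st}(L) = [\mathfrak{a}]$ and choose any $\mathfrak{b} \in [\mathfrak{o}] \cap I_K^{(1)}$ so that $\mathcal{G}_{\mathfrak{b}}$ is defined with respect to $L$. By Lemma \ref{partial:special} the $U(V)$-classes in $\mathcal{G}_{[\mathfrak{a}]}$ are in bijection with the $U(V)^u$-classes in $\mathcal{G}_{\mathfrak{b}}$, and by Lemma \ref{class:partition} each such $U(V)^u$-class decomposes into exactly $e(L) = [\mathfrak{o}^*:\det U(L)]$ special classes. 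The first step is therefore to translate $\operatorname{mass}(\mathcal{G}_{[\mathfrak{a}]})$, a sum of $1/|U(L_i)|$, into the special mass $\operatorname{mass}^+(\mathcal{G}_{\mathfrak{b}})$, a sum of $1/|SU(L_i)|$. Since $|U(L_i)|/|SU(L_i)| = |\det U(L_i)|$ and within a single genus the index $e(\mathcal{G})$ and the relevant local data are constant, I expect the weighting factors to combine into a single global constant independent of $[\mathfrak{a}]$.

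The second step is to count the special genera. By Lemma \ref{genus:partition} each $\mathcal{G}_{\mathfrak{b}}$ is the disjoint union of exactly $e(\mathcal{G})$ special genera, and by Lemma \ref{partial:genus:description}(b) the partial genus $\mathcal{G}_{[\mathfrak{a}]}$ is the union of the $\mathcal{G}_{\mathfrak{b}}$ over the principal ideals $\mathfrak{b}$ of norm $1$; but the point of Lemma \ref{partial:special} is that a single $\mathfrak{b}$ already carries a full set of representatives, so no overcounting occurs. Summing the special masses over all special genera inside $\mathcal{G}$ recovers $\operatorname{mass}(\mathcal{G})$, up to the same unit-index weighting. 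The key quantitative input is that the number of ideal genera is $2^{t-1}$ (established in the Preliminaries), hence each ideal genus $\llbracket\mathfrak{a}\rrbracket$ contains exactly $h_K/2^{t-1}$ ideal classes, and the occurring Steinitz classes of $\mathcal{G}$ are precisely one full ideal genus by Proposition \ref{hermitian_spaces}(a). Rehmann's mass formula (cf. \cite{rehmann}) asserts that the special masses of all special genera lying in $\mathcal{G}$ are equal; this is the crucial external result that makes the argument go through.

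Putting these together, I would argue that $\operatorname{mass}(\mathcal{G})$ equals the number of occurring Steinitz classes, namely $h_K/2^{t-1}$, times the common partial mass. The cleanest route is a double-counting identity
\begin{align*}
\operatorname{mass}(\mathcal{G}) = \sum_{[\mathfrak{c}] \in \llbracket\mathfrak{a}\rrbracket} \operatorname{mass}(\mathcal{G}_{[\mathfrak{c}]}),
\end{align*}
which holds because the partial genera $\mathcal{G}_{[\mathfrak{c}]}$ partition $\mathcal{G}$ as $[\mathfrak{c}]$ ranges over the occurring Steinitz classes. It then suffices to show all summands are equal. Equality of the partial masses is exactly the statement that $\operatorname{mass}(\mathcal{G}_{[\mathfrak{a}]}) = \operatorname{mass}(\mathcal{G}_{[\mathfrak{b}]})$ for $[\mathfrak{b}] \in \llbracket\mathfrak{a}\rrbracket$, and this follows from translating each partial mass into a special mass (via the $e(L)$-fold and unit-index bookkeeping above) and then applying Rehmann's equality of special masses across $\mathcal{G}$.

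The main obstacle I anticipate is the careful bookkeeping of the unit indices that relate $|U(L_i)|$ to $|SU(L_i)|$ and $e(L)$ to $e(\mathcal{G})$, and verifying that these factors are genuinely constant across the different Steinitz classes rather than merely constant within a single $\mathcal{G}_{\mathfrak{b}}$. In particular one must check that the possible exceptional behaviour at $d_K \in \{-3,-4\}$ (where $|\mathfrak{o}^*| > 2$) and the $e(\mathcal{G}) = 2$ case for even lattices do not spoil the uniformity; I would handle this by showing the weighting depends only on the genus $\mathcal{G}$ and not on the individual partial genus, so that it cancels when comparing two partial masses. Beyond this, the argument is essentially a reorganisation of sums once Rehmann's special mass formula is invoked.
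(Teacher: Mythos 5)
Your proposal follows essentially the same route as the paper's proof: pass from $\operatorname{mass}(\mathcal{G}_{[\mathfrak{a}]})$ to the $U(V)^u$-classes in a single $\mathcal{G}_{\mathfrak{b}}$ via Lemma \ref{partial:special}, split each into $e(L_i)=[\mathfrak{o}^*:\det U(L_i)]$ special classes (Lemma \ref{class:partition}) so that the identity $e(L_i)/|SU(L_i)|=|\mathfrak{o}^*|/|U(L_i)|$ turns the partial mass into $\tfrac{e(\mathcal{G})}{|\mathfrak{o}^*|}$ times the special mass, invoke Rehmann's result that the special mass depends only on $\mathcal{G}$, and sum over the $h_K/2^{t-1}$ ideal classes in $\llbracket\mathfrak{a}\rrbracket$. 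The unit-index uniformity you flag as a potential obstacle is resolved exactly as you anticipate, since the lattice-dependent factor $e(L_i)$ cancels against $|U(L_i)|/|SU(L_i)|$, leaving only the genus-level constants $e(\mathcal{G})$ and $|\mathfrak{o}^*|$.
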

\begin{proof}
Let $L_1,\ldots, L_{h_{[\mathfrak{a}]}}$ be a set of representatives for the classes in $\mathcal{G}_{[\mathfrak{a}]}$ as specified by Lemma \ref{partial:special}. By Lemma \ref{class:partition} the $U(V)^u$ class of any $L_i$ is the disjoint union of exactly $e(L_i)=[\mathfrak{o}^* : \det{U(L_i)}]$ special classes.\\
By Lemma \ref{genus:partition} $\mathcal{G}_{\mathfrak{b}}$ is the disjoint union of exactly $e(\mathcal{G})$ special genera, therefore the number of special classes in $\mathcal{G}_{\mathfrak{b}}$ is finite. Choose any $\mathfrak{b} \in [\mathfrak{o}] \cap I_K^{(1)}$ and let $\Lambda_1\ldots,\Lambda_m$ ($m=\sum_i e(L_i)$) be a set of representatives of the special classes contained in $\mathcal{G}_{\mathfrak{b}}$. Since $\det : U(L_i) \rightarrow \mathfrak{o}^*$ is a group homomorphism with kernel $SU(L_i)$, we get $|\det{U(L_i)}|=[U(L_i):SU(L_i)]$. This gives
$$ \tfrac{e(L_i)}{|SU(L_i)|}=\tfrac{|\mathfrak{o}^*|}{|\det{U(L_i)}|\cdot |SU(L_i)|} = \tfrac{|\mathfrak{o}^*|}{|U(L_i)|}.  $$
Furthermore, since for isometric lattices $L_i\cong \Lambda_j$ the groups $SU(L_i)$ and $SU(\Lambda_j)$ are conjugate (in $U(V)$), the groups $SU(\Lambda_j)$ have the same order $|SU(L_i)|$ for all $j$ with $L_i\cong \Lambda_j$ . Thus we get
\begin{align}\label{relation_mass}
 \begin{split}
 |\mathfrak{o}^*|\cdot \operatorname{mass}(\mathcal{G}_{[\mathfrak{a}]}) &= |\mathfrak{o}^*|\cdot \sum^{h_{[\mathfrak{a}]}}_{i=1} \tfrac{1}{|U(L_i)|} = \sum^{h_{[\mathfrak{a}]}}_{i=1} \tfrac{e(L_i)}{|SU(L_i)|} \\
                                                             &= \sum^{h_{[\mathfrak{a}]}}_{i=1} \sum_{\stackrel{j=1}{\Lambda_j\cong L_i}}^{m} \tfrac{1}{|SU(\Lambda_j)|} =\sum_{j=1}^{m} \tfrac{1}{|SU(\Lambda_j)|}
\end{split}
\end{align}
Since $\Lambda_1,\ldots,\Lambda_m$ is a set of representatives of the special classes contained in $\mathcal{G}_{[\mathfrak{a}]}$, the last sum in (\ref{relation_mass}) is the sum of the special masses of all special genera contained in $\mathcal{G}_{[\mathfrak{a}]}$. By \cite{rehmann} (4.1) we get
that the special mass does not depend on the special genus $\mathcal{G}^{+}$ but only on the genus $\mathcal{G}$. Since there are exactly $e(\mathcal{G})$ 
special genera contained in $\mathcal{G}_{[\mathfrak{a}]}$ all of them having the same special mass, we get the equation
\begin{align*}\operatorname{mass}(\mathcal{G}_{[\mathfrak{a}]})=\tfrac{e(\mathcal{G})}{|\mathfrak{o}^*|} \cdot \operatorname{mass}^+(\mathcal{G}^+). \end{align*}
Therefore also the partial mass does not depend on the Steinitz class $[\mathfrak{a}]$ and since there are exactly $\tfrac{h_K}{2^{t-1}}$ ideal classes in the ideal genus $\llbracket \mathfrak{a} \rrbracket$ we get the claimed formula.
\end{proof}

Using results of Gan and Yu (cf. \cite{gan}) and Cho (cf. \cite{cho}) we now want to give an exact mass formula for the genera $\mathbb{I}_{n,\llbracket \mathfrak{a} \rrbracket}$ and $\mathbb{II}_{n,\llbracket \mathfrak{a} \rrbracket}$ described in chapter 2. The general mass formula for a genus $\mathcal{G}$ of  definite unimodular lattices of rank $n$ over $K$ reads as follows (cf. \cite{gan} Theorem 10.20):
\begin{align}\label{Mass:Formula1111} \operatorname{mass}(\mathcal{G})= 2 \cdot |d_K|^{\frac{n(n+1)}{4}} \cdot \prod_{j=1}^{n} \frac{(j-1)!}{(2\pi)^j} \cdot \prod_{p\in\mathbb{P}} \alpha_p(\mathcal{G})^{-1}, \end{align}
where $\alpha_p(\mathcal{G})$ denotes the local density of the genus $\mathcal{G}$ at $p$. The local density of a unimodular lattice $L$ for $p\in\mathbb{P}$ is given by Theorem 7.3. in \cite{gan} (we use the following convention: $\chi_K^i=1\Leftrightarrow i \text{ even}$ and $\chi_K^i=\chi_K\Leftrightarrow i \text{ odd}$):
\begin{align*}\alpha_p(\mathcal{G}) =  \prod\limits_{i=1}^n (1-\chi^i_K(p)\cdot p^{-i})\cdot \begin{cases} 1 & p\text{ unramified},\\ 2 \cdot \lambda_p(\mathcal{G})^{-1} & p\text{ ramified},\end{cases}\end{align*}
where for ramified primes $p\in\mathbb{P}$
\begin{align} \label{lambda} \lambda_p(\mathcal{G}):= \begin{cases}
 1  & n\text{ odd}, \\
  1+ (N(\mathfrak{a}),K/\Q)_2 \cdot ((-1)^{\frac{p-1}{2}} p)^{-\frac{n}{2}}
 &  n \text{ even, } p\neq 2,  \\
  1-2^{-n}  & n\text{ even, } p=2, \mathcal{G}\text{ odd}, \\
  2^{1-n}  & n\text{ even, } p=2, \mathcal{G}\text{ even, }d_K\equiv_8 4, \\
  2^{-n} (1+ (N(\mathfrak{a}),K/\Q)_2 \cdot ((-1)^{\frac{D-2}{4}} \cdot 2)^{-\frac{n}{2}}) & n\text{ even, } p=2, \mathcal{G}\text{ even, }d_K\equiv_8 0. \\
\end{cases}
\end{align}
In terms of Dirichlet $L$-series attached to $\chi^{j}_K$ the product of the inverse local densities $\alpha_p(\mathcal{G})^{-1}$ over all $p\in \mathbb{P}$ can be written as:
\begin{align}\label{inverse:local:densities} \prod_{p\in\mathbb{P}} \alpha_p(\mathcal{G})^{-1}=2^{-t} \cdot \prod_{j=1}^n L(j,{\chi^j_K}) \cdot \prod_{p|d_K}\lambda_p(\mathcal{G}) \end{align}

For further use, we state:
\begin{proposition}\label{Partial:mass:formula}
Let $\mathcal{G}$ be a genus of  definite unimodular lattices of rank $n$ over $K$ with $\mathcal{G}_{[\mathfrak{a}]}\subseteq\mathcal{G}$. Then
\begin{align*} \operatorname{mass}(\mathcal{G}_{[\mathfrak{a}]})= \frac{1}{|\mathfrak{o}^*|} \cdot |d_K|^{\frac{n^2+n-2}{4}} \cdot \prod_{j=2}^{n} \frac{(j-1)!}{(2\pi)^j} \cdot \prod_{j=2}^n L(j,{\chi^j_K}) \cdot \prod_{p|d_K}\lambda_p(\mathcal{G})
\end{align*}
where $\lambda_p(\mathcal{G})$ is defined in (\ref{lambda}).
 \end{proposition}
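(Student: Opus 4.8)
The plan is to derive the formula from the two facts already established: the structural identity of Theorem~\ref{partial:mass} and the explicit evaluation of $\operatorname{mass}(\mathcal{G})$ via the local densities, the only genuinely arithmetic input being the analytic class number formula. First I would rewrite Theorem~\ref{partial:mass} as
$$\operatorname{mass}(\mathcal{G}_{[\mathfrak{a}]}) = \frac{2^{t-1}}{h_K}\cdot \operatorname{mass}(\mathcal{G})$$
and then substitute the general mass formula (\ref{Mass:Formula1111}) together with the expression (\ref{inverse:local:densities}) for $\prod_{p} \alpha_p(\mathcal{G})^{-1}$. The constant $2$ in (\ref{Mass:Formula1111}) combines with the $2^{-t}$ of (\ref{inverse:local:densities}) and the prefactor $2^{t-1}$ above to give $2^{(t-1)+1-t}=1$, so this yields
$$\operatorname{mass}(\mathcal{G}_{[\mathfrak{a}]}) = \frac{1}{h_K}\cdot |d_K|^{\frac{n(n+1)}{4}}\cdot \prod_{j=1}^{n}\frac{(j-1)!}{(2\pi)^j}\cdot \prod_{j=1}^{n} L(j,\chi_K^j)\cdot \prod_{p|d_K}\lambda_p(\mathcal{G}).$$

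The decisive step is then to peel off the $j=1$ contributions from the two products and absorb them into the class number formula. For $j=1$ the factorial factor equals $\tfrac{1}{2\pi}$, and since $1$ is odd we have $\chi_K^1 = \chi_K$, so the corresponding $L$-factor is $L(1,\chi_K)$. For the imaginary-quadratic field $K$ the analytic class number formula reads $L(1,\chi_K) = \tfrac{2\pi\, h_K}{|\mathfrak{o}^*|\cdot |d_K|^{1/2}}$, where $|\mathfrak{o}^*|$ is the number of roots of unity in $K$. Consequently the isolated $j=1$ data contribute
$$\frac{1}{h_K}\cdot \frac{1}{2\pi}\cdot L(1,\chi_K) = \frac{1}{|\mathfrak{o}^*|\cdot |d_K|^{1/2}},$$
which is precisely where the factor $h_K$ cancels. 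This is the only place where a nonelementary ingredient enters, and I expect it to be the one requiring care rather than an obstacle in the usual sense.

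Finally I would reassemble the remaining factors. After removing the $j=1$ terms, both products over $j$ run from $2$ to $n$, matching the statement, and it remains only to track the power of $|d_K|$: the exponent $\tfrac{n(n+1)}{4}$ is lowered by $\tfrac12$ coming from the $|d_K|^{-1/2}$ just produced, giving $\tfrac{n(n+1)}{4}-\tfrac12 = \tfrac{n^2+n-2}{4}$, exactly as claimed. All of this is bookkeeping with the constants $2$, $2\pi$ and the powers of $|d_K|$, so once the class number formula is invoked the identity follows immediately.
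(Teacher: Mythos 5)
Your proof is correct and takes essentially the same route as the paper's: the paper likewise combines (\ref{Mass:Formula1111}) with (\ref{inverse:local:densities}) to get $\operatorname{mass}(\mathcal{G})=2^{1-t}\,|d_K|^{\frac{n(n+1)}{4}}\prod_{j=1}^{n}\frac{(j-1)!}{(2\pi)^j}\prod_{j=1}^n L(j,\chi_K^j)\prod_{p\mid d_K}\lambda_p(\mathcal{G})$, then applies Theorem~\ref{partial:mass} and the class number formula $h_K=\tfrac{|\mathfrak{o}^*|}{2\pi}\sqrt{|d_K|}\,L(1,\chi_K)$ to cancel the $j=1$ factors exactly as you do. Your bookkeeping of the powers of $2$ and of $|d_K|$ is accurate, so nothing is missing.
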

\begin{proof} Combining formulas (\ref{Mass:Formula1111}) and (\ref{inverse:local:densities}) yields the following mass formula for the genus $\mathcal{G}$:
$$ \operatorname{mass}(\mathcal{G})= 2^{1-t} \cdot |d_K|^{\frac{n(n+1)}{4}} \cdot \prod_{j=1}^{n} \frac{(j-1)!}{(2\pi)^j} \cdot \prod_{j=1}^n L(j,{\chi^j_K}) \cdot \prod_{p|d_K}\lambda_p(\mathcal{G}) $$
Using Theorem \ref{partial:mass} and the class-number formula $h_K=\tfrac{|\mathfrak{o}^*|}{2\pi} \sqrt{|d_K|} \cdot L(1,\chi_K)$ (cf. \cite{zag} §8 Satz 5) we get exactly the claimed result.
\end{proof}

\begin{corollary}\label{Partial:mass:formula:exact}
Let $\mathcal{G}$ be a genus of  definite unimodular lattices of rank $n$ over $K$ with $\mathcal{G}_{[\mathfrak{a}]}\subseteq\mathcal{G}$. Then
\begin{align*} \operatorname{mass}(\mathcal{G}_{[\mathfrak{a}]})= \frac{1}{|\mathfrak{o}^*|} \cdot \prod_{j=2}^{n} \frac{\left|B_{j,\chi_K^j}\right|}{2j} \cdot\prod_{p|d_K}\lambda_p(\mathcal{G}) \cdot 
\begin{cases} 1 & n \text{ odd,}\\ |d_K|^{n/2} & n \text{ even,}                                                                                                                                                         
\end{cases} 
\end{align*}
\nopagebreak where $B_{j,\chi^j_K}$ denote the generalized Bernoulli numbers and $\lambda_p(\mathcal{G})$ is defined in (\ref{lambda}).
\end{corollary}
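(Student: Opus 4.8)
The plan is to rewrite the formula of Proposition \ref{Partial:mass:formula} by substituting the classical closed forms for the special values $L(j,\chi_K^j)$ in terms of generalized Bernoulli numbers. The factors $\tfrac{(j-1)!}{(2\pi)^j}$ appearing there are tailored precisely to cancel the transcendental part of these special values, so that after substitution only rational data and a single power of $|d_K|$ survive; the whole argument then reduces to bookkeeping of that power.

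First I would record the special-value formulas in the two cases dictated by the convention $\chi_K^j = 1$ for even $j$ and $\chi_K^j = \chi_K$ for odd $j$. Here $\chi_K$ is the odd primitive quadratic character of conductor $|d_K|$ whose Gauss sum $\tau(\chi_K)$ has absolute value $\sqrt{|d_K|}$. For even $j$ one has $L(j,\chi_K^j) = \zeta(j)$ together with Euler's formula $\zeta(j) = \tfrac{(-1)^{j/2+1}(2\pi)^j}{2\cdot j!}B_j$, while for odd $j$ the functional equation for the odd character $\chi_K$ gives $L(j,\chi_K) = \tfrac{(-1)^{(j+1)/2}}{2}\bigl(\tfrac{2\pi}{|d_K|}\bigr)^j \tau(\chi_K)\, i^{-1}\, \tfrac{B_{j,\chi_K}}{j!}$. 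Since the partial mass and all the remaining factors (the $\lambda_p(\mathcal{G})$, the factorials, the powers of $2\pi$) are positive, I may pass to absolute values everywhere; using $B_{j,\chi_K^j} = B_j$ for even $j\ge 2$, both cases then combine into the single identity
\[
|L(j,\chi_K^j)| \;=\; \frac{(2\pi)^j}{2\cdot j!}\,\bigl|B_{j,\chi_K^j}\bigr|\cdot
\begin{cases} 1 & j\text{ even},\\ |d_K|^{\frac12 - j} & j\text{ odd}. \end{cases}
\]

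Substituting this into Proposition \ref{Partial:mass:formula} and using the cancellation $\tfrac{(j-1)!}{(2\pi)^j}\cdot\tfrac{(2\pi)^j}{2\cdot j!} = \tfrac{1}{2j}$, the product $\prod_{j=2}^n \tfrac{(j-1)!}{(2\pi)^j}\,|L(j,\chi_K^j)|$ collapses to $\prod_{j=2}^n \tfrac{|B_{j,\chi_K^j}|}{2j}$ multiplied by the power of $|d_K|$ accumulated from the odd indices. It then only remains to determine the exponent of $|d_K|$.

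The verification of that exponent is the single computational step, and I expect it to be the only genuine obstacle (elementary, but needing care). Collecting the prefactor $|d_K|^{(n^2+n-2)/4}$ together with the contribution $|d_K|^{1/2-j}$ from each odd $j \in \{3,\ldots,n\}$, one must show that
\[
\frac{n^2+n-2}{4} + \sum_{\substack{3\le j\le n\\ j\text{ odd}}}\Bigl(\tfrac12 - j\Bigr)
\]
equals $0$ for odd $n$ and $\tfrac{n}{2}$ for even $n$. Writing $n = 2m$ respectively $n = 2m+1$ and evaluating the sum over odd $j$ by the standard arithmetic-progression formulas, both values drop out at once, after which the expression matches the statement of the corollary exactly.
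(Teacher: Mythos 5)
Your proof is correct and takes essentially the same route as the paper's: both substitute the functional-equation special values of $L(j,\chi_K^j)$ (the paper phrases this via $L(1-j,\chi_K^j)=\pm B_{j,\chi_K^j}/j$, you via the explicit Gauss-sum form, which is equivalent) into Proposition \ref{Partial:mass:formula}, cancel $\tfrac{(j-1)!}{(2\pi)^j}$ against $\tfrac{(2\pi)^j}{2\cdot j!}$ to produce $\prod_{j=2}^n \tfrac{|B_{j,\chi_K^j}|}{2j}$, and track the resulting power of $|d_K|$. Your exponent check --- that $\tfrac{n^2+n-2}{4}+\sum_{3\le j\le n,\ j\ \mathrm{odd}}\bigl(\tfrac12-j\bigr)$ equals $0$ for odd $n$ and $\tfrac{n}{2}$ for even $n$ --- is exactly the bookkeeping the paper leaves implicit, and it is correct.
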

\begin{proof}
Using the functional equation for the $L$-series we get 
\begin{align*} |d_K|^{\frac{n(n+1)}{4}} \cdot \prod_{j=1}^{n} \frac{(j-1)!}{(2\pi)^j} \cdot \prod_{j=1}^n L(j,{\chi^j_K}) &= 2^{-n}\cdot \prod_{j=1}^n L(1-j,{\chi^j_K})  \cdot \begin{cases} 1 & n \text{ odd,}\\ |d_K|^{n/2} & n \text{ even.} \end{cases}  
\end{align*}
Using $L(1-j,\chi_K^j)=\frac{\left|B_{j,\chi_K^j}\right|}{j}$ (cf. \cite{washington} ) and Theorem \ref{partial:mass} we finally get the result. 
\end{proof}

\begin{remark}
Proposition \ref{Partial:mass:formula:exact} makes the partial mass computable explicitly since the generalized Bernoulli numbers $B_{j,\chi^j_K}$ are given by the following formula (cf. \cite{washington} Prop. 4.1)
$$ B_{j,\chi^j_K} = \begin{cases} B_j(1) & j\text{ even}, \\ {|d_K|}^{j-1} \sum_{a=1}^{|d_K|} \chi_K(a) B_j(\frac{a}{|d_K|}) & j\text{ odd}, \end{cases} $$
where $B_j(X)$ denotes the usual Bernoulli polynomial.
\end{remark}

\section{Single-class partial genera} \label{class:number:problem}
In this last section we determine all single-class partial genera of  definite unimodular lattices of rank $n\ge 2$ over an imaginary-quadratic field. In particular we determine all single-class genera of such lattices. It turns out that a partial genus consists of a single isometry class if and only if the genus consists of a single isometry class.

\begin{lemma}\label{lambda:bound}
Let $\mathcal{G}$ be a genus of  definite unimodular lattices of rank $n$ over $K$. The following inequality holds:
\begin{align*}
 \prod_{p \mid d_K} \lambda_p(\mathcal{G})  \ge 2^{-n} \cdot \frac{2^{n/2} - 1}{(t+1)^{n/2}}.
\end{align*} 
\end{lemma}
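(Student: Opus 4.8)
The plan is to bound each factor $\lambda_p(\mathcal{G})$ from below by examining the cases in the definition (\ref{lambda}) and then multiply these bounds together over the $t$ ramified primes. Since the rank $n$ is even whenever any nontrivial even-rank contribution matters (the odd case gives $\lambda_p = 1$, which is harmless), I would focus on establishing a uniform lower bound of the form $\lambda_p(\mathcal{G}) \ge c_p$ for each ramified $p$, where the $c_p$ are chosen so that their product telescopes into the stated expression $2^{-n} \cdot \frac{2^{n/2}-1}{(t+1)^{n/2}}$.

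First I would treat the odd primes $p \neq 2$. Here $\lambda_p(\mathcal{G}) = 1 + (N(\mathfrak{a}),K/\Q)_2 \cdot ((-1)^{(p-1)/2}p)^{-n/2}$, and since the norm residue symbol is $\pm 1$ and $((-1)^{(p-1)/2}p)^{-n/2}$ has absolute value $p^{-n/2}$, the worst case is $\lambda_p(\mathcal{G}) \ge 1 - p^{-n/2}$. The crucial observation is that each such $p$ is at least $3$ (the smallest odd prime), so $1 - p^{-n/2} \ge 1 - 3^{-n/2}$; more usefully, I expect to bound this in terms of $p$ itself to exploit that the ramified primes are distinct and their product relates to $|d_K|$. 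Next I would handle $p = 2$: in the odd-genus case $\lambda_2 = 1 - 2^{-n}$, and in the even cases $\lambda_2$ equals $2^{1-n}$ or $2^{-n}(1 \pm 2^{-n/2}\cdot 2^{\mp})$, so in all cases $\lambda_2(\mathcal{G}) \ge 2^{-n}(2^{n/2}-1)$ or something comparable — I would verify that $2^{-n}(2^{n/2}-1)$ is the genuine worst case among the $p=2$ branches, isolating the factor $2^{-n}(2^{n/2}-1)$ that appears in the target bound.

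The main obstacle will be combining the odd-prime bounds $\prod_{p \mid d_K,\, p \neq 2}(1 - p^{-n/2})$ into the clean denominator $(t+1)^{n/2}$. The natural route is to write $\prod_p (1 - p^{-n/2}) \ge \prod_p \frac{p^{n/2}-1}{p^{n/2}}$ and then bound below using that there are at most $t-1$ such odd ramified primes and that the product of terms of the form $1 - p^{-n/2}$ is minimized when the primes are as small as possible; alternatively, one bounds $\prod (1-p^{-n/2}) \ge 1/(t+1)^{n/2}$ via an inequality comparing $\prod \frac{p^{n/2}}{p^{n/2}-1}$ to $(t+1)^{n/2}$. I anticipate this reduces to a statement like $\prod_{p \mid d_K, p \neq 2} \frac{p}{p-1} \le t+1$ or its $n/2$-th-power analogue, which should follow from the distinctness and size of the ramified primes (they are $\ge 3$, and there are $t$ of them total). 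Assembling the pieces, the $p=2$ factor contributes $2^{-n}(2^{n/2}-1)$ and the odd factors contribute at least $(t+1)^{-n/2}$, yielding the claimed inequality.

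Thus the key steps in order are: (1) dispatch the odd-rank case trivially; (2) bound $\lambda_p$ below for odd ramified $p$ by $1 - p^{-n/2} \ge \frac{p^{n/2}-1}{p^{n/2}}$; (3) identify the worst-case lower bound $2^{-n}(2^{n/2}-1)$ for the $p=2$ factor across all parity branches; (4) prove the combinatorial inequality that forces $\prod_{p \neq 2}(1-p^{-n/2}) \ge (t+1)^{-n/2}$ using that the odd ramified primes are distinct and at least $3$; and (5) multiply to conclude. I expect step (4) to be where the real work lies.
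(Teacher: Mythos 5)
Your step (3) contains a genuine error: the claimed uniform worst-case bound $\lambda_2(\mathcal{G}) \ge 2^{-n}(2^{n/2}-1)$ is false. In the branch ``$n$ even, $p=2$, $\mathcal{G}$ even, $d_K \equiv_8 0$'' of (\ref{lambda}) the value $\lambda_2(\mathcal{G}) = 2^{-n}(1-2^{-n/2})$ occurs, which is smaller than your claimed bound by the factor $2^{n/2}$ (for $n=2$: $\lambda_2 = \tfrac18$ versus your claimed $\tfrac14$). Consequently, even granting your step (4), your assembly in step (5) only yields $2^{-n}\cdot\frac{2^{n/2}-1}{2^{n/2}\,(t+1)^{n/2}}$, which falls short of the lemma by a factor $2^{n/2}$. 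The numerator $2^{n/2}-1$ in the statement does \emph{not} come from the $p=2$ local density, and your plan misattributes it.

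The paper's proof instead extracts a single global prefactor: $\prod_{p \mid d_K}\lambda_p(\mathcal{G}) \ge 2^{-n}\prod_{p\mid d_K}(1-p^{-n/2})$, where the $2^{-n}$ absorbs the worst $p=2$ branch and the harmless factor $1-2^{-n/2}$ remains \emph{inside} the product as an ordinary term. Then, since the $k$th prime satisfies $p_k \ge k+1$, one has $\prod_{p\mid d_K}(1-p^{-n/2}) \ge \prod_{k=1}^t \frac{(k+1)^{n/2}-1}{(k+1)^{n/2}}$, and a telescoping rearrangement using the Bernoulli inequality $\frac{(k+1)^{n/2}-1}{k^{n/2}}\ge 1$ (for $k\ge 2$, $n\ge 2$) leaves exactly $\frac{2^{n/2}-1}{(t+1)^{n/2}}$; the factor $2^{n/2}-1$ is the surviving first telescoping term coming from $1-2^{-n/2}$. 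Your step (4) as stated, $\prod_{p\ne 2,\,p\mid d_K}(1-p^{-n/2}) \ge (t+1)^{-n/2}$, is in fact true and provable by this same telescoping device, but you only ``anticipate'' it rather than prove it, and it cannot repair the loss from step (3): with the correct worst case $\lambda_2 = 2^{-n}(1-2^{-n/2})$ your decomposition would instead require $\prod_{p\ne 2}(1-p^{-n/2}) \ge \bigl(\tfrac{2}{t+1}\bigr)^{n/2}$, which does hold (the $j$th odd prime is at least $j+2$, and $3^{n/2}-1 \ge 2^{n/2}$ with equality at $n=2$, so the bound is tight) --- but this is precisely the nontrivial telescoping argument missing from your proposal. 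Note also that when $d_K$ is odd there is no $p=2$ factor at all, so your decomposition needs an additional case split, whereas the paper's uniform prefactor $2^{-n}$ handles both situations simultaneously.
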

\begin{proof}
It is easy to check by the definition of $\lambda_p(\mathcal{G})$ in (\ref{lambda}) that:
\begin{align}\label{lambda:bound1}
 \prod_{p \mid d_K} \lambda_p(\mathcal{G}) &\ge 2^{-n} \cdot \prod_{p \mid d_K} \left( 1- p^{-\frac{n}{2}} \right)
\end{align} 
If $p_k$ denotes the $k$th prime number, we have $p_k\ge k+1$ and therefore
\begin{align*} 
  \prod_{p \mid d_K} \left( 1- p^{-\frac{n}{2}} \right) \ge \prod_{k=1}^{t} \left( 1- p_k^{-\frac{n}{2}} \right) \ge \prod_{k = 1}^{t} \left( 1- (k+1)^{-\frac{n}{2}} \right) = \prod_{k = 1}^{t}  \frac{(k+1)^{\frac{n}{2}}-1}{(k+1)^{\frac{n}{2}}} 
\end{align*} 
The Bernoulli inequality gives us $\tfrac{(k+1)^{\frac{n}{2}}-1}{k^{\frac{n}{2}}} \ge 1$ for $k\in\{2,\ldots,t\}$ and therefore
$$ \prod_{k = 1}^{t}  \frac{(k+1)^{\frac{n}{2}}-1}{(k+1)^{\frac{n}{2}}} = (2^{\frac{n}{2}}-1) \cdot \frac{3^{\frac{n}{2}}-1}{2^{\frac{n}{2}}} \cdots \frac{(t+1)^{\frac{n}{2}}-1}{t^{\frac{n}{2}}} \cdot \frac{1}{(t+1)^{\frac{n}{2}}} \ge \frac{2^{n/2} - 1}{(t+1)^{n/2}}$$
Together with (\ref{lambda:bound1}) we get the claimed result.
\end{proof}

\begin{proposition}
For all $n\ge2 $ there is a $d_{\text{max}}(n)\in \mathbb{N}$ such that for all single-class partial genera of  definite unimodular lattices of rank $n$ over $K$ we have $|d_K|\leq d_\text{max}(n)$. For $2\le n \le 7$ such a bound $d_\text{max}(n)$ is given by:
\begin{center}
\begin{tabular}{|c|c|c|c|c|c|c|}
\hline
 $n$ & 2 & 3 & 4 & 5 & 6 & 7 \\
 \hline
 $d_\text{max}(n)$ & 312 & 16 & 8 & 7 & 4  & 3  \\
 \hline
\end{tabular}
\end{center}
\end{proposition}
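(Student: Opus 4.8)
The plan is to bound $|d_K|$ from above by exploiting the fact that the mass of a single-class partial genus is bounded below by a universal constant, while the mass formula from Corollary~\ref{Partial:mass:formula:exact} forces the mass to grow with $|d_K|$. Concretely, if $\mathcal{G}_{[\mathfrak{a}]}$ consists of a single isometry class, then $\operatorname{mass}(\mathcal{G}_{[\mathfrak{a}]}) = \tfrac{1}{|U(L)|}$ for the unique lattice $L$, and since $|U(L)| \le C(n)$ is bounded (the automorphism group of a definite lattice of rank $n$ embeds in a compact group and its order is bounded in terms of $n$ by a Minkowski-type estimate), we obtain a \emph{lower} bound $\operatorname{mass}(\mathcal{G}_{[\mathfrak{a}]}) \ge 1/C(n)$. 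On the other hand, the explicit formula provides a \emph{lower} bound on the mass that is increasing in $|d_K|$, and comparing the two yields $|d_K| \le d_{\text{max}}(n)$.

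\textbf{First} I would write out the partial mass using Corollary~\ref{Partial:mass:formula:exact}, namely
\begin{align*}
\operatorname{mass}(\mathcal{G}_{[\mathfrak{a}]}) = \frac{1}{|\mathfrak{o}^*|} \cdot \prod_{j=2}^{n} \frac{|B_{j,\chi_K^j}|}{2j} \cdot \prod_{p \mid d_K} \lambda_p(\mathcal{G}) \cdot |d_K|^{\varepsilon_n},
\end{align*}
where $\varepsilon_n = 0$ for $n$ odd and $\varepsilon_n = n/2$ for $n$ even. The key is to bound each factor from below by an expression that grows with $|d_K|$. The factor $\prod_{p \mid d_K} \lambda_p(\mathcal{G})$ is handled by Lemma~\ref{lambda:bound}, giving $\prod_{p \mid d_K} \lambda_p(\mathcal{G}) \ge 2^{-n} \cdot \frac{2^{n/2}-1}{(t+1)^{n/2}}$; since $t$ is the number of prime divisors of $d_K$ and hence $t \ll \log|d_K|$, this denominator grows only polylogarithmically and does not spoil the growth coming from elsewhere. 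The dominant growth must come from the generalized Bernoulli numbers $\prod_{j=2}^n |B_{j,\chi_K^j}|$ together with (for $n$ even) the factor $|d_K|^{n/2}$.

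\textbf{Next} comes the analytic heart of the argument: I would lower-bound $|B_{j,\chi_K^j}|$. For $j$ even, $B_{j,\chi_K^j} = B_j(1)$ is a fixed nonzero rational independent of $K$, contributing a constant. For $j$ odd, $\chi_K^j = \chi_K$ is the (odd) quadratic character, and $|B_{j,\chi_K}|$ is related through $L(1-j,\chi_K) = |B_{j,\chi_K}|/j$ and the functional equation to the central-type $L$-value $L(j,\chi_K)$; since $L(j,\chi_K) = \prod_p (1-\chi_K(p)p^{-j})^{-1} \ge \prod_p (1+p^{-j})^{-1} = \zeta(2j)/\zeta(j) > 0$ is bounded below by an absolute constant, the functional equation transfers this into a lower bound $|B_{j,\chi_K}| \gg |d_K|^{j-1/2}$ (up to constants depending only on $n$). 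Multiplying over the odd $j \le n$ produces a power of $|d_K|$ of strictly positive exponent. Assembling these bounds, I obtain $\operatorname{mass}(\mathcal{G}_{[\mathfrak{a}]}) \ge f(n) \cdot |d_K|^{\delta(n)} / (\log|d_K|)^{O(n)}$ with $\delta(n) > 0$, and combined with the upper bound $\operatorname{mass} \le 1$ (or $\le 1/2$ using $-\operatorname{id}$) valid for a single-class partial genus, this forces $|d_K|$ into a finite range.

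\textbf{The hard part will be} making the Bernoulli/$L$-value lower bounds fully explicit and sharp enough that the resulting $d_{\text{max}}(n)$ matches the tabulated values for $2 \le n \le 7$ rather than merely proving finiteness. The naive estimates above give finiteness cheaply, but the precise thresholds $312, 16, 8, 7, 4, 3$ almost certainly require the author to evaluate the finitely many remaining candidate discriminants directly — i.e. the existence-of-a-bound claim is analytic, but pinning down $d_{\text{max}}(n)$ will involve plugging in the exact formula and checking, for each $n$, the largest $|d_K|$ for which the mass can still drop below the reciprocal of the maximal unit-group order. I would therefore prove the qualitative statement analytically and verify the explicit table by finite computation using the exact formula in Corollary~\ref{Partial:mass:formula:exact}.
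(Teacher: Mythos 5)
Your proposal is correct in substance and lands on the same strategy as the paper: pit a lower bound on the partial mass that grows with $|d_K|$ against the upper bound coming from single-class-ness, then pin down the exact table by finite computation with Corollary~\ref{Partial:mass:formula:exact}. Two remarks. First, your opening plan has the inequalities inverted: you invoke a Minkowski-type bound $|U(L)| \le C(n)$ to get $\operatorname{mass}(\mathcal{G}_{[\mathfrak{a}]}) \ge 1/C(n)$, and then propose to compare this with a second \emph{lower} bound growing in $|d_K|$ --- comparing two lower bounds yields nothing. What is actually needed (and what you do use in your second paragraph) is the \emph{upper} bound $\operatorname{mass}(\mathcal{G}_{[\mathfrak{a}]}) = 1/|U(L)| \le 1/|\mathfrak{o}^*|$, which follows from the trivial inequality $|U(L)| \ge |\mathfrak{o}^*|$; this is exactly the paper's starting point, and the Minkowski-type estimate is unnecessary. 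Second, your analytic step routes the $|d_K|$-growth through lower bounds on the generalized Bernoulli numbers $|B_{j,\chi_K}| \gg |d_K|^{j-1/2}$ via the functional equation. This works, but it is a detour: the paper instead works directly from Proposition~\ref{Partial:mass:formula}, in which the power $|d_K|^{(n^2+n-2)/4}$ is already explicit, so that only the elementary bound $L(j,\chi_K) = \zeta_K(j)/\zeta(j) \ge 1/\zeta(j)$ (for odd $j$, with $L(j,\chi_K^j)=\zeta(j)$ for even $j$), together with Lemma~\ref{lambda:bound} and $t \le \log_2|d_K|$, is needed to produce the increasing minorant $f_n(d)$; your Bernoulli estimates are essentially this same computation viewed after the functional equation, which is precisely how Corollary~\ref{Partial:mass:formula:exact} is derived from Proposition~\ref{Partial:mass:formula} in the first place. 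Your closing assessment is accurate: the tabulated values $312,16,8,7,4,3$ are obtained, as in the paper, by improving the analytic bound to the largest admissible discriminant for which the exactly computed partial mass has the form $1/(2m)$.
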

\begin{proof}
For a single-class partial genus $\mathcal{G}_{[\mathfrak{a}]}$ with representative $L$ we have $\operatorname{mass}(\mathcal{G}_{[\mathfrak{a}]})=\tfrac{1}{|U(L)|}$. Since $|U(L)| \geq |\mathfrak{o}^*|$ for every lattice $L$, we necessarily have $\operatorname{mass}(\mathcal{G}_{[\mathfrak{a}]})\le \tfrac{1}{|\mathfrak{o}^*|}$ for a single-class partial genus $\mathcal{G}_{[\mathfrak{a}]}$. \\ 
We now want to find a lower bound on the partial mass which for fixed rank $n\ge 2$ depends only on the discriminant $d_K$. Therefore we use the expression for the partial mass in Proposition \ref{Partial:mass:formula}. Since $\zeta_K(j)=\zeta(j)\cdot L(j,\chi_K)$ (cf. \cite{zag} p.100) and $\zeta_K(j)\geq 1$ for all $j>1$ we easily deduce $L(j,\chi_K) = \frac{\zeta_K(j)}{\zeta(j)} \ge \frac{1}{\zeta(j)}$.
Using $t \le \log_2(|d_K|)$ and Lemma \ref{lambda:bound} for even $n$, it is easily seen that if 
\begin{align*} f_n(d): = \begin{cases}  \frac{1}{|\mathfrak{o}^*|} \cdot d^{\frac{n^2+n-2}{4}} \cdot \prod_{j=2}^{n} \frac{(j-1)!}{(2\pi)^j}  \cdot \frac{\zeta(2) \cdot \zeta(4) \cdot \ldots \cdot \zeta(n-1)}{\zeta(3) \cdot \zeta(5) \cdot \ldots \cdot \zeta(n)} & n\text{ odd}, \\ \tfrac{1}{|\mathfrak{o}^*|}\cdot d^{\frac{n^2+n-2}{4}} \cdot \frac{2^{n/2} - 1}{2^n(\log_2(|d_K|)+1)^{\frac{n}{2}}} \cdot  \prod_{j=2}^{n} \frac{(j-1)!}{(2\pi)^j}  \cdot \tfrac{\zeta(2) \cdot \zeta(4) \cdot \ldots \cdot \zeta(n)}{\zeta(3) \cdot \zeta(5) \cdot \ldots \cdot \zeta(n-1)} & n\text{ even}, \end{cases}
\end{align*} 
we have $f_n(|d_K|)\leq \operatorname{mass}(\mathcal{G}_{[\mathfrak{a}]})$  for all $n\in \N$. Since $f_n : \N \rightarrow \R_{\geq 0}$ is a strictly increasing function (in $d$) for $n\geq 2$, there is a $d_{\text{max}}(n)$ such that $f_n(d)\le \tfrac{1}{|\mathfrak{o}^*|}$ implies $d\le d_{\text{max}}(n)$. In particular we have $|d_K|\leq d_{\text{max}}(n)$ if there is a single-class partial genus of  definite unimodular lattices of rank $n$ over $K$.\\
For fixed $n$ we can estimate this bound $d_{\text{max}}(n)$. Using Corollary \ref{Partial:mass:formula:exact} we can compute the actual partial masses explicitly. Therefore, once given a bound $d_\text{max}(n)$, we can try to improve it by substituting it by the greatest number $d\leq d_{\text{max}}(n)$ such that $d$ is a discriminant of some imaginary-quadratic field $K$ and there is a partial genus of  definite unimodular lattices of rank $n$ over $K$ with partial mass of the form $\tfrac{1}{2m}$ for some $m\in\mathbb{N}$.
\end{proof}
Instead of computing $d_\text{max}(n)$ for all $n\geq 2$, we will effectively only need bounds for $n\in\{2,4\}$. All other cases will be solved by constructing lattices with higher rank from other ones with lower rank. The procedure is as follows:\\

As seen in section 2, there is a free odd definite unimodular lattice $L^{\text{odd}}_{1,[\mathfrak{o}]}$ of rank $1$ over every imaginary-quadratic field. Now the cancellation law implies that the mapping $L\mapsto L\perp L^{\text{odd}}_{1,[\mathfrak{o}]}$ is an injection preserving the Steinitz class. Thus, if the partial genus of odd definite unimodular lattices of rank $n$ consists of more than one single class, then the corresponding partial genus of odd definite unimodular lattices with the same Steinitz class and rank $n+1$ also consists of more than one class. In order to find all single-class partial genera we therefore have to determine the single-class partial genera of rank $2$. Only for those we have to consider the corresponding partial genera of higher rank until we reach a partial genus consisting of more than one class.\\

We use essentially the same argument to determine all partial genera of even lattices. The procedure is slightly more difficult since even definite unimodular lattices exist only for even rank and there is a free even definite unimodular lattice $L^{\text{even}}_{4,[\mathfrak{o}]}$ of rank $4$ for every imaginary-quadratic field (cf. Corollary \ref{free:quaternary:lattice}), but not necessarily of rank $2$. Therefore we have to determine all single-class partial genera of even definite unimodular lattices rank $2$ and $4$ to proceed as above.\\

To check whether a partial genus consists of only one class, we compute the actual partial mass using Corollary \ref{Partial:mass:formula:exact}. If this mass is of the form $\frac{1}{2m}$ for some $m \in \N$, the corresponding partial genus has to be tested further. For such a remaining partial genus $\mathcal{G}_{[\mathfrak{a}]}$ we construct a lattice $L$ in it using the proof of Theorem \ref{start:lattices}. Then $\mathcal{G}_{[\mathfrak{a}]}$ is a single-class partial genus if and only if $\operatorname{mass}(\mathcal{G}_{[\mathfrak{a}]})=\frac{1}{|U(L)|}$. Doing this we get a complete list of all single-class partial genera of definite unimodular lattices of rank $n\geq 2$ over an imaginary-quadratic field. This proves the following Theorem:

\begin{theorem}
 For a genus $\mathcal{G}$ of  definite unimodular lattices of rank $n\geq 2$ over an imaginary-quadratic field the following are equivalent:
 \begin{enumerate}
  \item[a)] A partial genus $\mathcal{G}_{[\mathfrak{a}]}$ contained in $\mathcal{G}$ consists of only one isometry class.
  \item[b)] $\mathcal{G}$ consists of only one isometry class.
  \item[c)] $\mathcal{G}$ is included in the following list:
   \begin{flushleft}
   \begin{tabular}{c|l}
  $d_K$ & genera\\
  \hline
  $-3$ & $\mathbb{I}_{2,\llbracket\mathfrak{o}\rrbracket}$, $\mathbb{I}_{3,\llbracket\mathfrak{o}\rrbracket}$, $\mathbb{I}_{4,\llbracket\mathfrak{o}\rrbracket}$, $\mathbb{I}_{5,\llbracket\mathfrak{o}\rrbracket}$\\
  $-4$ & $\mathbb{I}_{2,\llbracket\mathfrak{o}\rrbracket}$, $\mathbb{I}_{3,\llbracket\mathfrak{o}\rrbracket}$, $\mathbb{I}_{4,\llbracket\mathfrak{o}\rrbracket}$, $\mathbb{II}_{4,\llbracket\mathfrak{o}\rrbracket}$\\
  $-7$ & $\mathbb{I}_{2,\llbracket\mathfrak{o}\rrbracket}$\\
  $-8$ & $\mathbb{I}_{2,\llbracket\mathfrak{o}\rrbracket}$, $\mathbb{II}_{2,\llbracket\mathfrak{o}\rrbracket}$\\
  $-20$ & $\mathbb{I}_{2,\llbracket\mathfrak{a}\rrbracket}$\\
  $-24$ & $\mathbb{II}_{2,\llbracket\mathfrak{o}\rrbracket}$, $\mathbb{II}_{2,\llbracket\mathfrak{a}\rrbracket}$\\
  $-40$ & $\mathbb{II}_{2,\llbracket\mathfrak{o}\rrbracket}$, $\mathbb{II}_{2,\llbracket\mathfrak{a}\rrbracket}$\\
  $-88$ & $\mathbb{II}_{2,\llbracket\mathfrak{a}\rrbracket}$\\

 \end{tabular}
 \end{flushleft}
 In each case $[\mathfrak{a}]$ is the unique non-trivial ideal class, and $\llbracket\mathfrak{a}\rrbracket$ denotes the unique non-trivial ideal genus.
 \end{enumerate}\qed
\end{theorem}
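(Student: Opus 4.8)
The plan is to establish the cycle of implications $(b)\Rightarrow(a)\Rightarrow(c)\Rightarrow(b)$. The implication $(b)\Rightarrow(a)$ is immediate: if $\mathcal{G}$ consists of a single isometry class, then only one Steinitz class occurs and the corresponding partial genus is that very class. The implication $(c)\Rightarrow(b)$ is a finite verification: for each of the finitely many genera in the table I would construct an explicit representative via the recipe in the proof of Theorem \ref{start:lattices}, compute its mass from Corollary \ref{Partial:mass:formula:exact}, and confirm that this mass equals $\tfrac{1}{|U(L)|}$, so that $h_{\mathcal{G}}=1$. The real content lies in $(a)\Rightarrow(c)$.

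First I would record the necessary numerical condition. If $\mathcal{G}_{[\mathfrak{a}]}$ is single-class with representative $L$, then $\operatorname{mass}(\mathcal{G}_{[\mathfrak{a}]})=\tfrac{1}{|U(L)|}\le\tfrac{1}{|\mathfrak{o}^*|}$, since scalar multiplication by units of $K$ (all of norm $1$) embeds $\mathfrak{o}^*$ into $U(L)$; by Theorem \ref{partial:mass} all partial masses in $\mathcal{G}$ coincide, so this is a genuine constraint on $\mathcal{G}$. Next I would feed the estimates $L(j,\chi_K)=\tfrac{\zeta_K(j)}{\zeta(j)}\ge\tfrac{1}{\zeta(j)}$ together with $t\le\log_2|d_K|$ and the bound of Lemma \ref{lambda:bound} on $\prod_{p\mid d_K}\lambda_p(\mathcal{G})$ into the partial-mass formula of Proposition \ref{Partial:mass:formula}, obtaining a lower bound $f_n(|d_K|)\le\operatorname{mass}(\mathcal{G}_{[\mathfrak{a}]})$ depending only on $n$ and $|d_K|$. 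Because $f_n$ is strictly increasing in $d$ for $n\ge2$, the inequality $f_n(|d_K|)\le\tfrac{1}{|\mathfrak{o}^*|}$ forces an effective discriminant bound $|d_K|\le d_{\max}(n)$.

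Having bounded the discriminant in each rank, I would reduce the rank using the cancellation law for definite lattices. For odd lattices the map $L\mapsto L\perp L^{\text{odd}}_{1,[\mathfrak{o}]}$ is an injection on isometry classes preserving parity and Steinitz class, so if the odd partial genus of rank $n$ is not single-class, neither is the one of rank $n+1$ with the same Steinitz class over the same field. Contrapositively, a single-class odd partial genus of rank $n$ descends to a single-class one of rank $n-1$, and it suffices to start from the single-class odd partial genera of rank $2$ (bounded by $d_{\max}(2)=312$) and ascend. For even lattices, which exist only in even rank, the free building block is $L^{\text{even}}_{4,[\mathfrak{o}]}$ (Corollary \ref{free:quaternary:lattice}); the map $L\mapsto L\perp L^{\text{even}}_{4,[\mathfrak{o}]}$ decreases rank by $4$, so the base cases are ranks $2$ and $4$ (bounded by $d_{\max}(2)$ and $d_{\max}(4)=8$).

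Within these finitely many pairs $(n,d_K)$ I would enumerate all genera and compute each partial mass explicitly through the generalized Bernoulli numbers of Corollary \ref{Partial:mass:formula:exact}. Since $-\operatorname{id}\in U(L)$ forces $|U(L)|$ to be even, a single-class partial mass must take the shape $\tfrac{1}{2m}$, which discards the overwhelming majority of candidates at once. For the survivors I would build an explicit lattice $L$ (again via Theorem \ref{start:lattices}) and decide single-classness by testing $\operatorname{mass}(\mathcal{G}_{[\mathfrak{a}]})=\tfrac{1}{|U(L)|}$, the automorphism-group computation being carried out in \textsc{Magma}. The surviving cases assemble into the table of $(c)$, and since each of them turns out to be single-class as a whole genus, the cycle closes. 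I expect the last step to be the main obstacle: the genus enumerations rely on the neighbour method and the automorphism-group orders are delicate, and one must moreover ascend far enough in rank (for instance up to rank $6$ when $d_K=-3$) to certify that the list is complete rather than merely nonempty.
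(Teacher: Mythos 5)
Your proposal is correct and follows the paper's own proof essentially step for step: the same constraint $\operatorname{mass}(\mathcal{G}_{[\mathfrak{a}]})\le\tfrac{1}{|\mathfrak{o}^*|}$ combined with the lower bound $f_n(|d_K|)$ assembled from Proposition \ref{Partial:mass:formula}, Lemma \ref{lambda:bound} and $t\le\log_2|d_K|$ to get $d_{\max}(n)$ (needed only for $n\in\{2,4\}$), the same rank reduction via the cancellation law by splitting off $L^{\text{odd}}_{1,[\mathfrak{o}]}$ resp.\ $L^{\text{even}}_{4,[\mathfrak{o}]}$ with base ranks $2$ and $2,4$, and the same finite verification (the $\tfrac{1}{2m}$ filter from $-\operatorname{id}\in U(L)$, explicit lattices from Theorem \ref{start:lattices}, and the test $\operatorname{mass}(\mathcal{G}_{[\mathfrak{a}]})=\tfrac{1}{|U(L)|}$ in \textsc{Magma}), including the need to ascend until a multi-class rank is reached to certify completeness. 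The only blemish is the harmless slip that $L\mapsto L\perp L^{\text{even}}_{4,[\mathfrak{o}]}$ \emph{increases} rank by $4$ (it is the descent that decreases it).
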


\begin{remark} As already mentioned, ideals and lattices of rank $1$ happen to be essentially the same objects. Therefore the problem of finding all single class partial genera of  definite unimodular lattices of rank $n=1$ is trivial. \\
However the more familiar problem of finding all genera consisting of a single isometry class of lattices is in fact the problem on \emph{idoneal numbers}: Finding all fundamental discriminants, such that each ideal genus consists of a single ideal class, i.e. $h_K=2^{t-1}$. This is a long standing open problem and a solution lies well beyond the scope of the techniques used here.
\end{remark}

\bibliographystyle{amsalpha}
\bibliography{refs}

\end{document}